\documentclass[10pt,oneside]{article}

\usepackage[utf8]{inputenc} %
\usepackage[T1]{fontenc}    %
\usepackage{url}            %
\usepackage{booktabs}       %
\usepackage{amsfonts}       %
\usepackage{nicefrac}       %
\usepackage{microtype}      %

\usepackage{lmodern}
\usepackage{times}

\usepackage{amssymb,amsmath,amsthm}
\usepackage{bbold,stmaryrd}
\usepackage[short]{optidef}

\usepackage{framed}

\usepackage{xcolor}
\usepackage{subfigure,graphicx,epsfig,tikz,caption}
\usepackage[normalem]{ulem}
\usepackage{enumerate}
\usepackage{verbatim}
\usepackage{makeidx,latexsym}
\usepackage[colorlinks=true]{hyperref}%

\usepackage{bm}
\usepackage{stmaryrd}

\usepackage[numbers,sort&compress,square,comma]{natbib}

\usepackage{algorithmic,algorithm}

\usepackage[english]{babel}
\usepackage{bbm}
\usepackage{pgfplots}

    \usepackage{parskip}
\usepackage[letterpaper,margin=1.1in]{geometry}

    \usepackage{thmtools,thm-restate}

    \declaretheorem{corollary}
    \declaretheorem{lemma}
    \declaretheorem{proposition}

    \declaretheoremstyle[qed=$\square$]{definitionwithend}
    \declaretheorem[style=definitionwithend]{definition}
    
    \declaretheorem[style=definitionwithend]{example}
    \declaretheorem[style=definitionwithend]{remark}

\usepackage{url}
\usepackage[colorlinks=true]{hyperref}
\usepackage[numbers,sort&compress,square,comma]{natbib}

\definecolor{gold}{rgb}{0.85,0.65,0}

\newcommand{\be}{\begin{eqnarray}}
\newcommand{\ee}[1]{\label{#1}\end{eqnarray}}
\newcommand{\ese}{\end{eqnarray*}}
\newcommand{\bse}{\begin{eqnarray*}}

\def\fnote#1{\footnote}

\def\P{{\mathbb{P}}}

\def\R{{\mathbb{R}}}

\def\Z{{\mathbb{Z}}}

\def\cM{{\cal M}}

\def\cP{{\cal P}}
\def\cQ{{\cal Q}}
\def\cR{{\cal R}}

\def\cV{{\cal V}}

\def\cX{{\cal X}}

\newcommand{\vc}{\bm{c}}

\newcommand{\va}{\bm{a}}
\newcommand{\vA}{\bm{A}}

\newcommand{\vh}{\bm{h}}
\newcommand{\vr}{\bm{r}}
\newcommand{\vq}{\bm{q}}

\newcommand{\vW}{\bm{W}}
\newcommand{\vz}{\bm{z}} %
\newcommand{\vx}{\bm{x}} %
\newcommand{\vy}{\bm{y}} %
\newcommand{\vpi}{\bm{\pi}} %
\newcommand{\vb}{\bm{b}} %

\newcommand{\vbeta}{\bm{\beta}}

\newcommand{\vxi}{\bm{\xi}}

\newcommand{\valpha}{\bm{\alpha}}

\newcommand{\vell}{\bm{\ell}}

\DeclareMathOperator{\conv}{conv}

\def\ext{{\mathop{\rm ext}}}

\def\log{\mathop{{\rm log}}}

\newcommand{\st}{\text{s.t.}}

\newcommand {\beqn}{\begin{equation}}\newcommand {\eeqn}{\end{equation}}
\newcommand {\beqan}{\begin{eqnarray}}\newcommand {\eeqan}{\end{eqnarray}}
\newcommand {\beqa}{\begin{eqnarray*}}\newcommand {\eeqa}{\end{eqnarray*}}
\newcommand{\skipit}  [1] {}

\newtheorem{theorem}{Theorem}[section]

\begin{document}
\title{Joint Chance-Constrained Programs and the Intersection of Mixing Sets through a Submodularity Lens}
\author{
	Fatma K{\i}l{\i}n\c{c}-Karzan\thanks{Tepper School of Business, Carnegie Mellon University, Pittsburgh, PA 15213, USA, \url{fkilinc@andrew.cmu.edu}}
	\and 
	Simge K\"{u}\c{c}\"{u}kyavuz\thanks{Department of Industrial Engineering and Management Sciences, Northwestern University, Evanston, IL 60208, USA, \url{simge@northwestern.edu}}
	\and
	Dabeen Lee\thanks{Discrete Mathematics Group, Institute for Basic Science (IBS), Daejeon 34126, Republic of Korea, \url{dabeenl@ibs.re.kr}}
}	

\date{\today}

\maketitle

\begin{abstract}
A particularly important substructure in modeling joint linear chance-constrained programs with random right-hand sides and finite sample space is the intersection of mixing sets with common binary variables (and possibly a knapsack constraint). 
In this paper, we first revisit basic mixing sets by establishing a strong and previously unrecognized connection to submodularity. In particular, we show that mixing inequalities with binary variables are nothing but the polymatroid inequalities associated with a specific submodular function. This submodularity viewpoint enables us to unify and extend existing results on valid inequalities and convex hulls of the intersection of multiple mixing sets with common binary variables. Then, we study such intersections under an additional linking constraint lower bounding a linear function of the continuous variables. This is motivated from  the desire to exploit the information encoded in the knapsack constraint arising in joint linear CCPs via the quantile cuts. We propose a new class of valid inequalities and characterize when this new class  along with the mixing inequalities are sufficient to describe the convex hull. 
\end{abstract}

\section{Introduction}\label{sec:intro}

Given integers $n,k\geq 1$, a matrix $\vW=\left\{w_{i,j}\right\}\in\R^{n\times k}_+$, 
a vector $\vell\in\R_+^k$ and a nonnegative number $\varepsilon\geq 0$, we consider the mixed-integer set defined by
\begin{subequations}\label{eq:general-set}
	\begin{align}
	&y_j +w_{i,j} z_i \geq w_{i,j},\quad\qquad \qquad\forall i\in[n],~\forall j\in[k],\label{bigM}\\
	&y_j\geq \ell_j,~~~\qquad\qquad\qquad\qquad\forall j\in[k],\label{lb}\\
	&y_1+\cdots+y_k\geq\varepsilon+\sum_{j\in[k]}\ell_j,\label{linking}\\
	&\vy\in\R^k,~\vz\in\{0,1\}^n.\label{binary}
	\end{align}
\end{subequations}
We denote this set by $\cM(\vW,\vell,\varepsilon)$. When $\vW\in\R^{n\times k}_+ $, constraints~\eqref{bigM} are often called big-$M$ constraints, and constraints~\eqref{lb} impose lower bounds on the continuous variables $\vy$. Notice that~\eqref{linking} is a constraint linking all continuous variables, but it is non-redundant only if $\varepsilon$ is strictly positive. We will refer to~\eqref{linking} as the {\it linking constraint}. When $k=1$, $\vell=\bm{0}$, and $\varepsilon=0$, the set $\cM(\vW,\vell,\varepsilon)$ is nothing but %
what is commonly referred to as the mixing set (with binary variables) %
in the literature~\cite{abdi2016mixing-knapsack,kucukyavuz2012mixing,liu2018intersection,luedtke2010integer,zhao2017joint-knapsack}. Sets of the form $\cM(\vW,\bm{0},0)$ for general $k>1$ were first considered by \citet{atamturk2000mixed}; we will call the set $\cM(\vW,\bm{0},0)$ a {\it joint mixing set} in order to emphasize that $k$ can be taken to be strictly greater than 1. We will refer to a set of the form $\cM(\vW,\vell,\varepsilon)$ for general $\vell,\varepsilon$ as a {\it joint mixing set with lower bounds}. Finally, we will refer to a set of the form $\cM(\vW,\mathbf 0, \varepsilon)$ for general $\varepsilon$ as a {\it joint mixing set with a linking constraint}.

Our motivation for studying the structure of $\cM(\vW,\vell,\varepsilon)$ comes from joint linear chance-constrained programs (CCPs) with right-hand side uncertainty: given a probability space $(\bm{\Xi}, \mathcal{F},\P)$, %
a joint linear CCP with right-hand side uncertainty is an optimization problem of the following form:
\begin{subequations} \label{eq:ccp}
\begin{align}
\min\quad& \vh^\top \vx\label{ccp-obj}\\
\st\quad& \P\left[\vA \vx\geq \vb(\vxi)\right]\geq 1-\epsilon\label{ccp-cons}\\
&\vx\in \cX\subseteq \R^m, \label{ccp-vars}
\end{align}
\end{subequations}
where $\cX\subseteq \R^m$ is a domain for the decision variables $\vx$, $\epsilon\in(0,1)$ is a risk level, $\vb(\vxi)\in\R^k$ is the random right-hand side vector that depends on the random variable $\vxi\in \bm{\Xi}$, and $\vA,\vh$ are matrices of appropriate dimension. For $k=1$ (resp., $k>1$), inequality \eqref{ccp-cons} is referred to as an individual (resp., joint) chance constraint. Here, we seek to find a solution $\vx\in\cX$ satisfying the chance constraint~\eqref{ccp-cons}, enforcing that $\vA \vx\geq \vb(\vxi)$ holds with probability at least the given confidence level $1-\epsilon$, while minimizing the objective~\eqref{ccp-obj}. Joint chance constraints are used to model risk-averse decision-making problems in various applications, such as supply chain logistics \cite{LR07,Minjiao,Murr2000,LK18}, chemical processes \cite{Henrion2001,Henrion2003}, water quality management \cite{TL1999}, and energy \cite{WGW12} (see~\cite{prekopa2003ccp} for further background and an extensive list of applications). 

Problems with joint chance constraints are known to be notoriously challenging because the resulting feasible region is nonconvex even if all other constraints $\vx\in\cX$ and the restrictions inside the chance constraints are convex. %
Moreover, the sample space $\bm{\Xi}$ is typically continuous in practice, and the probability distribution $\mathbb{P}$ quantifying the uncertainty is often unavailable to the decision-maker. Consequently, the classical solution method is to use the Sample Average Approximation (SAA). The basic idea of SAA is to approximate $\bm{\Xi}$ via a set of sample scenarios $\vxi^1,\ldots, \vxi^n$ and reduce the problem to the case with a finite-sample distribution; we refer the interested reader to \cite{CC05,CC06,luedtke2008saa} for further details of SAA for CCPs. 

Inspired by this, joint linear CCPs with the finite sample space assumption have been extensively studied in the literature~\cite{abdi2016mixing-knapsack,kucukyavuz2012mixing,liu2018intersection,luedtke2010integer,zhao2017joint-knapsack}. %
That is to assume that  $\bm{\Xi}=\left\{\vxi^1,\ldots,\vxi^n\right\}$ for some integer $n\geq 1$ and that $\P\left[\vxi=\vxi^i\right]=p_i$ for $i\in[n]$ for some $p_1,\ldots,p_n\geq0$ with $\sum_{i\in[n]}p_i=1$, where for any positive integer $n$, we define $[n]$ to be the set $\{1,\ldots,n\}$. 
Under this setting, \citet{luedtke2010integer,ruspp:02} observed that the joint linear CCP, defined by~\eqref{eq:ccp}, can be reformulated as a mixed-integer linear program  as follows:
\begin{subequations}\label{eq:ccp-re}
\begin{align}
\min\quad& \vh^\top \vx\label{ccp-re-obj}\\
\st\quad& \vx\in\cX\subseteq\R^m,\qquad \vA \vx=\vb+\vy,\\
&y_j\geq w_{i,j}(1-z_i),\quad\forall i\in[n],~\forall j\in[k],\label{ccp-bigM}\\
&\sum_{i\in[n]}p_iz_i\leq \epsilon,\label{ccp-knapsack}\\
&\vy\in\R_+^k,~\vz\in\{0,1\}^n\label{ccp-binary},
\end{align}
\end{subequations}
where $\vb\in\R^k$ is some vector satisfying $\vb(\vxi^i)\geq \vb$ for all $i$ and $\bm{w_i}=(w_{i,1},\ldots,w_{i,k})^\top$ denotes $\vb(\vxi^i)-\vb$. Note that by definition of $\vb$, it follows that the data vector $\bm{w_i}$ is nonnegative for all $i\in[n]$. Observe that $\vA \vx\geq \vb$ are implicit inequalities, due to the chance constraint~\eqref{ccp-cons} with $1-\epsilon>0$. Here, $z_i$ is introduced as an indicator variable to model the event $\vA \vx\geq \vb(\vxi^i)$. More precisely, when $z_i=0$, the constraints~\eqref{ccp-bigM} enforce that $\vy\geq \bm{w_i}$ holds and thus $\vA \vx\geq \vb(\vxi^i)$ is satisfied. On the other hand, when $z_i=1$, it follows that $\vy\geq\bm{0}$ and $\vA \vx\geq \vb$, which is satisfied by default. Therefore,  constraints~\eqref{ccp-bigM}
are referred to as big-$M$ constraints. Finally, \eqref{ccp-knapsack} enforces that the probability of $\vA \vx\geq \vb(\vxi^i)$ being violated is at most~$\epsilon$. 

The size of the  deterministic equivalent formulation of the  joint  CCP given by~\eqref{eq:ccp-re} grows linearly with the number of scenarios. Unfortunately, such a reformulation based on big-M constraints comes with the disadvantage that the corresponding relaxations obtained by relaxing the binary variables into continuous are weak.  Thus, in order to achieve effectiveness in practical implementation, these reformulations must be strengthened with additional valid inequalities. 

A particularly important and widely applicable class of valid inequalities that strengthen the big-M reformulations of CCPs rely on a critical specific substructure in the formulation~\eqref{eq:ccp-re}, called a {\it mixing set with binary variables}; see e.g., \citet{luedtke2010integer} and \citet{kucukyavuz2012mixing}. Formally, given nonnegative coefficients $w_{1,j},\ldots,w_{n,j}$, %
a mixing set with binary variables is defined as follows: 
\[
\text{MIX}_j:=\left\{(y_j,\vz)\in\R_+\times\{0,1\}^n:~ y_j+w_{i,j}z_i \geq w_{i,j},~\forall i\in[n]\right\}; %
\]
hence the set defined by ~\eqref{ccp-bigM} and~\eqref{ccp-binary}, i.e.,
\[
\left\{(\vy,\vz)\in\R_+^k\times\{0,1\}^n:~ y_j+w_{i,j}z_i \geq w_{i,j},~\forall i\in[n],~\forall j\in[k]\right\}, %
\]
is nothing but a joint mixing set that shares common binary variables $\vz$, but independent continuous variables $y_j$, $j\in[k]$. Here, note that the set defined by ~\eqref{ccp-bigM} and~\eqref{ccp-binary} is precisely $\cM(\vW,\bm{\ell},\varepsilon)$ when $\bm{\ell}=\bm{0}$ and $\varepsilon=0$. Also, it is worthwhile to note that the  constraint~\eqref{ccp-knapsack} is a knapsack constraint. Therefore, the formulation~\eqref{eq:ccp-re} can be strengthened by the inclusion of valid inequalities originating from the set defined by \eqref{ccp-bigM}--\eqref{ccp-binary}.

The term {\it mixing set} is originally coined by \citet{gunluk2001mixing} for the sets of the form 
\[
\text{GMIX} := \left\{(y,\vz)\in\R_+\times\Z^n:~ y+u z_i \geq q_i,~\forall i\in[n] \right\},
\]
where the parameters are $u\in\R_+$  %
and $\vq=(q_1,\ldots,q_n)^\top\in\R^n$. 
Such sets $\text{GMIX}$ with general integer variables have applications in lot sizing and capacitated facility location problems; see e.g., ~\cite{conforti2007mix-flow,constantino2010divisible,gunluk2001mixing,miller2003tight,zhao2008div-cap} (see also~\cite{wolsey2010mixing-survey} for a survey of the area). 
For mixing sets with general integer variables such as $\text{GMIX}$ defined above, \citet{gunluk2001mixing} introduced the so-called {\it mixing inequalities}---an exponential family of linear inequalities that admits an efficient separation oracle---and showed that this class of inequalities are sufficient to describe the associated convex hull of the sets $\text{GMIX}$. In fact, prior to \cite{gunluk2001mixing}, in the context of lot-sizing problems, \citet[Theorem 18]{PW94} obtained the same result, albeit without using the naming convention of mixing sets/inequalities. 
Furthermore, the equivalence of $\text{MIX}_j$ and  
$\text{GMIX}$ under the additional domain restrictions $\vz\in\{0,1\}^n$ and the assumption $u\geq \max_i q_i$ is immediate.
The appearance of mixing sets with binary variables dates back to the work of \citet{atamturk2000mixed} on vertex covering. Essentially, it was shown in \cite{atamturk2000mixed} that the intersection of several sets of the form MIX$_j$ with common binary variables $\vz$ but separate continuous variables $y_j$, $j\in [k]$ can be characterized by the intersection of the corresponding {\it star inequalities}; see \cite[Theorem~3]{atamturk2000mixed}. 
Furthermore, it is well-known \cite{luedtke2010integer} that mixing inequalities for $\text{MIX}_j$ are equivalent to the star inequalities introduced in~\cite{atamturk2000mixed}. 
We will give a formal definition of mixing (star) inequalities for mixing sets with binary variables in Section~\ref{sec:mixing}.

Due to the importance of their use in joint CCPs, the mixing (with knapsack) substructure  \eqref{ccp-bigM}--\eqref{ccp-binary} present in the reformulations of joint CCPs has received a lot of attention in the more recent literature.
\begin{itemize}
\item For general $k$, i.e., when the number of linear constraints inside the chance constraint is more than one, \citet{atamturk2000mixed} proved that the convex hull of a joint mixing set of the form~\eqref{ccp-bigM} and~\eqref{ccp-binary}, which is equivalent to $\cM(\vW,\bm{0},0)$, can be described by applying the mixing inequalities. 
\item For $k=1$, \citet{luedtke2010integer}, \citet{kucukyavuz2012mixing}, and \citet{abdi2016mixing-knapsack} suggested valid inequalities for a single mixing set subject to the knapsack constraint~\eqref{ccp-knapsack}.
\item For general $k$, \citet{kucukyavuz2012mixing} and \citet{zhao2017joint-knapsack} proposed valid inequalities for a joint mixing set with a knapsack constraint.
\end{itemize}

\citet{luedtke2010integer} showed that the problem is NP-hard  for $k>1$ even when the  restrictions inside the chance constraints are linear and each scenario has equal probability, in which case the knapsack constraint \eqref{ccp-knapsack} becomes a cardinality constraint.  
However, \citet{kucukyavuz2012mixing} argued that the problem for $k=1$ under equiprobable scenarios is polynomial-time solvable and gave a compact and tight extended formulation based on disjunctive programming. Note that while not explicitly stated in \cite{kucukyavuz2012mixing}, when $k=1$ the polynomial-time solvability argument extends for the unequal probability case.

Many of these prior works aim to convexify a (joint) mixing set with a knapsack constraint directly. In contrast, in our paper we exploit the knapsack structure through an indirect approach based on {\it quantile inequalities}. Given $\vc\in\R_+^k$ and $\delta>0$, the {\it $(1-\delta)$-quantile for $\vc^\top \vy$} is defined as 
\[
q_{\vc,\delta}:=\min \left\{\vc^\top \vy :~\sum_{i\in[n]}p_iz_i\leq\delta,~(\vy,\vz)\text{ satisfies }\eqref{ccp-bigM},\eqref{ccp-binary}\right\}, 
\]
and the inequality $\vc^\top \vy\geq q_{\vc,\delta}$ is called a {\it $(1-\delta)$-quantile cut}. By definition, a $(1-\epsilon)$-quantile cut is valid for the solutions satisfying~\eqref{ccp-bigM}--\eqref{ccp-binary}. The quantile cuts have been studied in~\cite{ahmed2017nonanticipative,luedtke2014branch-and-cut,song2014cc-packing,qiu2014covering-lp,LKL16,xie2018quantile}, and their computational effectiveness %
has been observed in practice. 
As opposed to mixing sets and associated mixing inequalities, the quantile cuts link many continuous variables together; it is plausible to conjecture that this linking of the continuous variables is the one of the main sources of their effectiveness in practice.

The structure of a joint mixing set with lower bounds $\cM(\vW,\vell,\varepsilon)$, defined in~\eqref{eq:general-set}, %
is flexible enough to simultaneously work with quantile cuts. 
For $j\in[k]$, let $\ell_j$ denote the $(1-\epsilon)$-quantile for $\vc^\top \vy= y_j$. Then, for any $j\in[k]$, we have
\[
\ell_j=\min \left\{\max_{i\in[n]}\left\{w_{i,j}(1-z_i)\right\} :~ \vz\text{ satisfies }\eqref{ccp-knapsack},\eqref{ccp-binary}\right\}. 
\]
Note that $\ell_j$ can be computed in $O(n\log n)$ time, because without loss of generality we can assume $w_{1,j}\geq\cdots\geq w_{n,j}$ after possible reordering of $[n]$, and the optimum value of the above optimization problem is precisely $w_{t,j}$ where $t$ is the index such that $\sum_{i\leq t-1}p_i\leq \epsilon$ and $\sum_{i\leq t}p_i>\epsilon$. Although the $(1-\epsilon)$-quantile for $\sum_{j\in[k]}y_j$ seems harder to compute, at least we know that the value is greater than or equal to $\sum_{j\in[k]}\ell_j$. Therefore, we have quantile cuts $y_j\geq \ell_j$ for $j\in[k]$ and $\sum_{j\in[k]}y_j\geq \varepsilon+\sum_{j\in[k]}\ell_j$ for some $\varepsilon\geq0$, and the set defined by these quantile cuts and the constraints \eqref{ccp-bigM},~\eqref{ccp-binary} is precisely a set of the form $\cM(\vW,\vell,\varepsilon)$. Similarly, it is straightforward to capture the quantile cut $\vc^\top \vy\geq \varepsilon+\sum_{j\in[k]}c_j\ell_j$ for general $\vc\in\R_+^k$, because we can rewrite $y_j\geq \ell_j$ for $j\in[k]$, \eqref{ccp-bigM}~and~\eqref{ccp-binary} in terms of $c_1y_1,\ldots,c_jy_j$, and thus the resulting system is equivalent to a joint mixing set with lower bounds. 

 Next, we summarize our contributions and provide an outline of the paper. 

\subsection{Contributions and outline}\label{sec:outline}

In this paper, we study the polyhedral structure of $\cM(\vW,\vell,\varepsilon)$, i.e., joint mixing sets with lower bounds, mainly in the context of joint linear CCPs with random right-hand sides and a discrete probability distribution.
Our approach is based on a connection between mixing sets and submodularity that has been overlooked in the literature. Therefore, in Section~\ref{sec:prelim}, we first discuss basics of submodular functions and polymatroid inequalities as they relate to our work. In addition, we devote Section~\ref{sec:jointSubmodularity} to establish new tools on a particular joint submodular structure; these new tools play a critical role in our analysis of the joint mixing sets. 

Our contributions are as follows: 
\begin{enumerate}[(i)]
\item We first establish a strong and somewhat surprising connection between polymatroids and the basic mixing sets with binary variables   (Section \ref{sec:mixing}). It is well-known that submodularity imposes favorable characteristics in terms of explicit convex hull descriptions via known classes of inequalities and their efficient separation. In particular,  the idea of utilizing polymatroid inequalities from submodular functions has appeared in various papers in other contexts for specific binary integer programs~\cite{atamturk2008polymatroids,atamturk2019conic-quad,baumann2013submodular,xie2018dro,yu2017cardinality,zhang2018drobinary}. Notably, mixing sets have been known to be examples of simple structured sets whose convex hull descriptions possess similar favorable characteristics. However, to the best of our knowledge, the connection between submodularity and mixing sets  has not been recognized before. Establishing this connection enables us to unify and generalize various existing results on mixing sets with binary variables. 

\item In Section \ref{sec:aggregated}, we propose a new class of valid inequalities, referred to as the {\it aggregated mixing inequalities},  for the set $\mathcal M(\vW,\vell,\varepsilon)$. One important feature of the class of aggregated mixing inequalities as opposed to the standard mixing inequalities is that it is specifically designed to  simultaneously exploit the information encoded in multiple mixing sets with common binary variables. 

\item In Section \ref{sec:mixing-lb}, we establish conditions under which  the convex hull of the set $\mathcal M(\vW,\vell,\varepsilon)$ can be characterized through a submodularity lens. We show that the new class of aggregated mixing inequalities, in addition to the classical mixing inequalities, are sufficient under appropriate conditions.

\item In Section \ref{sec:two-sided}, we revisit  the results from a recent paper by \citet{liu2018intersection} on modeling two-sided CCPs. We show that mixing sets of the particular structure considered in~\cite{liu2018intersection} is nothing but a joint mixing set with lower bound structure with $k=2$ and two additional constraints involving only the continuous variables $\vy$. Thus, our results on aggregated mixing inequalities are immediately applicable to two-sided CCPs. In addition, we show that, due to the simplicity of the additional constraints on the variables $\vy$ in two-sided CCPs, our general convex hull results on $\cM(\vW,\vell,\varepsilon)$ can be extended easily to accommodate the additional constraints on $\vy$ and recover the convex hull results from \cite{liu2018intersection}.
\end{enumerate}

Finally, we would like to highlight that although our results are motivated by joint CCPs, they are broadly applicable to other settings where the intersection of mixing sets with common binary variables is present. In addition, applicability of our results from Section~\ref{sec:jointSubmodularity} extend to other cases where epigraphs of general submodular functions appear in a similar structure. %

\subsection{Notation}\label{sec:notation}

Given a positive integer $n$, we let $[n]:=\{1,\ldots,n\}$. 
We let $\bm{0}$ denote the vector of all zeros whose dimension varies depending on the context, and similarly, $\bm{1}$ denotes the vector of all ones. $\bm{e^j}$ denotes the unit vector whose $j\textsuperscript{th}$ coordinate is 1, and its dimension depends on the context.  
For $V\subseteq [n]$, $\bm{1}_V\in\{0,1\}^n$ denotes the characteristic vector, or the incidence vector, of $V$. 
For a set $Q$, we denote its convex hull and the extreme points of its convex hull by $\conv(Q)$ and $\ext(Q)$ respectively. 
For $t\in\R$, $\left(t\right)_+$ denotes $\max\{0,t\}$. 
Given a vector $\vpi\in\R^n$, and a set $V\subseteq [n]$, we define $\vpi(V)=\sum_{i\in V}\pi_i$.  
 For notational purposes, when $S=\emptyset$, we define $\max_{i\in S} s_i=0$ and $\sum_{i\in S} s_i=0$.

\section{Submodular functions and polymatroid inequalities}\label{sec:submodular}
In this section, we start with a brief review of submodular functions and polymatroid inequalities, and then in Section~\ref{sec:jointSubmodularity} we establish tools on joint submodular constraints that are useful for our analysis of $\cM(\vW,\vell,\varepsilon)$.

\subsection{Preliminaries}\label{sec:prelim}
Consider an integer $n\geq 1$ and a set function $f:2^{[n]}\to\R$. Recall that $f$ is {\it submodular} if
\[
f(A)+f(B)\geq f(A\cup B)+f(A\cap B),\quad \forall A,B\subseteq [n].
\]
Given a submodular set function $f$, \citet{edmonds1970polymatroid} introduced the notion of {\it extended polymatroid of $f$}, which is a polyhedron associated with $f$ defined as follows: 
\begin{equation}\label{ext-polymat}
EP_f:=\left\{\vpi\in\R^n:~\vpi(V)\leq f(V),~\forall V\subseteq[n]\right\}.
\end{equation}
Observe that $EP_f$ is nonempty if and only if $f(\emptyset)\geq0$. In general, a submodular function $f$ need not satisfy $f(\emptyset)\geq 0$. Nevertheless, it is straightforward to see that the function $f-f(\emptyset)$ is submodular whenever $f$ is submodular, and that $(f-f(\emptyset))(\emptyset)=0$. Hence, $EP_{f-f(\emptyset)}$ is always nonempty. Hereinafter, we use notation $\tilde f$ to denote $f-f(\emptyset)$ for any set function $f$.

A function on $\{0,1\}^n$ can be interpreted as a set function over the subsets of $[n]$, and thus, the definitions of submodular functions and extended polymatroids extend to functions over $\{0,1\}^n$. To see this, consider any integer $n\geq 1$ and any function $f:\{0,1\}^n\to\R$. With a slight abuse of notation, define $f(V):=f(\bm{1}_V)$ for $V\subseteq[n]$ where $\bm{1}_V$ denotes the characteristic vector of $V$. We say that $f:\{0,1\}^n\to\R$ is a submodular function if the corresponding set function over $[n]$ is submodular. We can also define the extended polymatroid of $f:\{0,1\}^n\to\R$ as in~\eqref{ext-polymat}. Throughout this paper, given a function $f:\{0,1\}^n\to\R$, we will switch between its set function interpretation and its original form, depending on the context.

Given a submodular function $f:\{0,1\}^n\to\R$, its {\it epigraph} is the mixed-integer set given by
\[
Q_f=\left\{(y,\vz)\in\R\times\{0,1\}^n:~y\geq f(\vz)\right\}.
\]
It is well-known that when $f$ is submodular, one can characterize the convex hull of $Q_f$ through the extended polymatroid of $\tilde f$.

\begin{theorem}[{\citet{lovasz1983submodular}, \citet[Proposition 1]{atamturk2008polymatroids}}]\label{thm:lovasz}
	Let $f:\{0,1\}^n\to\R$ be a submodular function. Then 
	\[
	\conv(Q_f)=\left\{(y,\vz)\in\R\times[0,1]^n:~y\geq \vpi^\top \vz+f(\emptyset),~\forall \vpi\in EP_{\tilde f}\right\}.
	\]
\end{theorem}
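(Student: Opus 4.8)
The plan is to establish the two inclusions separately, exploiting the polymatroid structure of the extended polymatroid $EP_{\tilde f}$.

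The plan is to prove the two inclusions separately, with the forward inclusion being routine and the reverse relying on Edmonds' greedy algorithm for linear optimization over extended polymatroids. Write $P$ for the polyhedron on the right-hand side. For the inclusion $\conv(Q_f)\subseteq P$, since $P$ is defined by linear inequalities and is therefore convex, it suffices to check $Q_f\subseteq P$. Taking $(y,\vz)\in Q_f$, let $V\subseteq[n]$ be the support of $\vz$, so $\vz=\bm 1_V$ and $f(\vz)=f(V)$. For any $\vpi\in EP_{\tilde f}$, the defining inequality of the extended polymatroid gives $\vpi^\top\vz=\vpi(V)\le\tilde f(V)=f(V)-f(\emptyset)$, hence $\vpi^\top\vz+f(\emptyset)\le f(V)=f(\vz)\le y$, so $(y,\vz)\in P$.

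For the reverse inclusion $P\subseteq\conv(Q_f)$, fix $(\bar y,\bar\vz)\in P$; in particular $\bar\vz\in[0,1]^n$ and $\bar y\ge\max_{\vpi\in EP_{\tilde f}}\vpi^\top\bar\vz+f(\emptyset)$. The key step is to evaluate this maximum via the greedy algorithm of \citet{edmonds1970polymatroid}. I would reorder $[n]$ as $\sigma(1),\dots,\sigma(n)$ so that $\bar z_{\sigma(1)}\ge\cdots\ge\bar z_{\sigma(n)}\ge 0$, and set the nested sets $S_0=\emptyset$ and $S_t=\{\sigma(1),\dots,\sigma(t)\}$. Since $\bar\vz\ge\bm 0$ and $\tilde f(\emptyset)=0$, the greedy vertex $\vpi^*$ with $\pi^*_{\sigma(t)}=\tilde f(S_t)-\tilde f(S_{t-1})$ lies in $EP_{\tilde f}$ and attains the maximum, whose value an Abel summation rewrites as $\sum_{t=1}^n(\bar z_{\sigma(t)}-\bar z_{\sigma(t+1)})\tilde f(S_t)$ under the convention $\bar z_{\sigma(n+1)}=0$.

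I would then exhibit the explicit convex decomposition. Setting $\lambda_0=1-\bar z_{\sigma(1)}$ and $\lambda_t=\bar z_{\sigma(t)}-\bar z_{\sigma(t+1)}$ for $t\in[n]$, one checks $\lambda_t\ge 0$, $\sum_{t=0}^n\lambda_t=1$, and, telescoping coordinatewise, $\bar\vz=\sum_{t=0}^n\lambda_t\bm 1_{S_t}$. A short computation using $\tilde f=f-f(\emptyset)$ converts the greedy value into $\max_{\vpi\in EP_{\tilde f}}\vpi^\top\bar\vz+f(\emptyset)=\sum_{t=0}^n\lambda_t f(S_t)$. Hence the point $\left(\sum_{t=0}^n\lambda_t f(S_t),\bar\vz\right)$ is a convex combination of the integer points $(f(S_t),\bm 1_{S_t})\in Q_f$, so it lies in $\conv(Q_f)$. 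Finally, since $(1,\bm 0)$ is a recession direction of $Q_f$ (increasing $y$ preserves $y\ge f(\vz)$) and $\bar y$ is at least the value above, adding the appropriate nonnegative multiple of $(1,\bm 0)$ yields $(\bar y,\bar\vz)\in\conv(Q_f)$.

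The main obstacle is the reverse inclusion, and within it the crux is correctly invoking Edmonds' greedy theorem to identify the maximizer over $EP_{\tilde f}$ and to recognize its value as the Lov\'asz extension of $\tilde f$ at $\bar\vz$. The remaining bookkeeping---the telescoping identity for the $\lambda_t$, the empty-set weight $\lambda_0$, and the shift by $f(\emptyset)$ relating $\tilde f(S_t)$ to $f(S_t)$---is routine but must be tracked carefully so that the decomposition lands exactly on the integer points $(f(S_t),\bm 1_{S_t})$ of $Q_f$.
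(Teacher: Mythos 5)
The paper does not prove this statement; it quotes it as a known result of \citet{lovasz1983submodular} and \citet[Proposition 1]{atamturk2008polymatroids}. Your argument is correct and is essentially the standard proof from those references: the forward inclusion follows directly from the definition of $EP_{\tilde f}$, and for the reverse inclusion you decompose $\bar{\vz}$ as a convex combination of the indicator vectors of the nested chain $S_0\subseteq\cdots\subseteq S_n$ obtained by sorting, identify the greedy value with the Lov\'asz extension $\sum_{t=0}^n\lambda_t f(S_t)$, and absorb the slack in $y$ along the recession direction $(1,\bm{0})$. The bookkeeping (the weight $\lambda_0=1-\bar z_{\sigma(1)}$ on $\emptyset$, the shift by $f(\emptyset)$, and the fact that the greedy vector lies in $EP_{\tilde f}$ by Theorem~\ref{thm:edmonds}) all checks out; note that you do not even need the greedy point to be a maximizer, only that it belongs to $EP_{\tilde f}$ so that the corresponding inequality in the definition of $P$ applies.
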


The inequalities $y\geq \vpi^\top \vz+f(\emptyset)$ for $\vpi\in EP_{\tilde f}$ are called the {\it polymatroid inequalities of~$f$}. Although there are infinitely many polymatroid inequalities of $f$, for the description of $\conv(Q_f)$, it is sufficient to consider only the ones corresponding to the extreme points of $EP_{\tilde f}$. We refer to the polymatroid inequalities defined by the extreme points of $EP_{\tilde f}$ as the {\it extremal polymatroid inequalities of $f$}. Moreover, \citet{edmonds1970polymatroid} provided the following explicit characterization of the extreme points of $EP_{\tilde f}$. 
\begin{theorem}[\citet{edmonds1970polymatroid}]\label{thm:edmonds}
Let $f:\{0,1\}^n\to\R$ be a submodular function. Then $\vpi\in\R^n$ is an extreme point of $EP_{\tilde f}$ if and only if there exists a permutation $\sigma$ of~$[n]$ such that $\pi_{\sigma(t)}=f(V_{t})-f(V_{t-1})$, where $V_{t}=\{\sigma(1),\ldots,\sigma(t)\}$ for $t\in[n]$ and $V_{0}=\emptyset$.
\end{theorem}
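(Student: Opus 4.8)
The plan is to prove both directions through Edmonds' greedy algorithm, realizing each candidate $\vpi^\sigma$ defined by $\pi^\sigma_{\sigma(t)}=\tilde f(V_t)-\tilde f(V_{t-1})$ (equivalently $f(V_t)-f(V_{t-1})$, since the $f(\emptyset)$ terms cancel) as the optimizer of a linear program over $EP_{\tilde f}$. The backbone is the claim that for any cost vector $\vc\in\R_+^n$, if a permutation $\sigma$ orders coordinates so that $c_{\sigma(1)}\geq\cdots\geq c_{\sigma(n)}\geq 0$, then $\vpi^\sigma$ maximizes $\vc^\top\vpi$ over $EP_{\tilde f}$. Two ingredients are needed: primal feasibility of $\vpi^\sigma$, and a matching dual certificate. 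Boundedness of the LP (which is what forces attention to $\vc\geq\bm 0$) follows because the recession cone of $EP_{\tilde f}$ is $\R_{\leq 0}^n$, so an objective with a negative coordinate is unbounded; consequently every extreme point of $EP_{\tilde f}$ arises as the \emph{unique} maximizer of some $\vc^\top\vpi$ with $\vc\geq\bm 0$.

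First I would verify $\vpi^\sigma\in EP_{\tilde f}$, i.e.\ $\vpi^\sigma(V)\leq\tilde f(V)$ for every $V\subseteq[n]$. Listing the elements of $V$ in the order induced by $\sigma$ as $\sigma(t_1),\ldots,\sigma(t_m)$ with $t_1<\cdots<t_m$, and writing $W_r=\{\sigma(t_1),\ldots,\sigma(t_r)\}$, I would telescope $\vpi^\sigma(V)=\sum_{r}\big(\tilde f(V_{t_r})-\tilde f(V_{t_r-1})\big)$. Since $W_{r-1}\subseteq V_{t_r-1}$ and $\sigma(t_r)\notin V_{t_r-1}$, the diminishing-returns form of submodularity gives $\tilde f(V_{t_r})-\tilde f(V_{t_r-1})\leq\tilde f(W_r)-\tilde f(W_{r-1})$. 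Summing over $r$ collapses the right-hand side to $\tilde f(W_m)-\tilde f(W_0)=\tilde f(V)$, using $W_m=V$ and $\tilde f(\emptyset)=0$, which establishes feasibility.

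For optimality I would exhibit an explicit dual solution. The LP dual of $\max\{\vc^\top\vpi:\vpi(V)\leq\tilde f(V)\ \forall V\}$ is
\[
\min\ \sum_{V\subseteq[n]}\lambda_V\,\tilde f(V)\quad\text{s.t.}\quad \sum_{V\ni i}\lambda_V=c_i\ \ \forall i\in[n],\quad \lambda_V\geq 0\ \ \forall V\subseteq[n].
\]
Setting $\lambda_{V_t}=c_{\sigma(t)}-c_{\sigma(t+1)}$ for $t\in[n-1]$ and $\lambda_{V_n}=c_{\sigma(n)}$ (all other $\lambda_V=0$) is dual feasible precisely because $\vc$ is sorted and nonnegative: the constraint for index $\sigma(s)$ telescopes to $\sum_{t\geq s}\lambda_{V_t}=c_{\sigma(s)}$. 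A summation-by-parts identity then shows $\sum_V\lambda_V\tilde f(V)=\vc^\top\vpi^\sigma$, so by weak duality $\vpi^\sigma$ is optimal.

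To conclude extremality, I would run this with the strictly decreasing costs $c_{\sigma(t)}=n-t+1$, making every $\lambda_{V_1},\ldots,\lambda_{V_n}$ strictly positive; complementary slackness then forces every optimal primal point to satisfy the full chain of equalities $\vpi(V_t)=\tilde f(V_t)$, $t\in[n]$. Because $V_1\subset\cdots\subset V_n=[n]$ is a maximal chain, this triangular system has $\vpi^\sigma$ as its unique solution, so $\vpi^\sigma$ is the unique maximizer and hence a vertex. Conversely, an extreme point $\vpi^*$ is the unique maximizer of some $\vc^\top\vpi$ with $\vc\geq\bm 0$; choosing $\sigma$ compatible with the decreasing order of $\vc$ makes $\vpi^\sigma$ optimal as well, so uniqueness forces $\vpi^*=\vpi^\sigma$. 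I expect the main obstacle to be the feasibility/optimality pairing through the explicit dual: getting both telescopings right---the submodular diminishing-returns estimate for feasibility and the summation-by-parts for the dual objective---is where the structural content lies, whereas extremality and the converse follow cleanly from complementary slackness once the chain $V_1\subset\cdots\subset V_n$ is shown to be tight.
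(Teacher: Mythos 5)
Your proof is correct, and it is essentially the classical argument the paper is invoking: the statement is cited from Edmonds (1970) without proof, and the paper's accompanying discussion of the greedy algorithm for $\max\{\bm{\bar z}^\top \vpi : \vpi\in EP_{\tilde f}\}$ is exactly the mechanism you make rigorous via the telescoping feasibility bound, the explicit dual certificate, and complementary slackness along the chain $V_1\subset\cdots\subset V_n$. Both directions (tight chain with strictly decreasing costs for extremality, and the recession-cone argument forcing a nonnegative objective for the converse) are handled correctly, so there is nothing to fix.
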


The algorithmic proof of Theorem~\ref{thm:edmonds} from \citet{edmonds1970polymatroid} is of interest. Suppose that we are given a linear objective $\bm{\bar z}\in\R^n$; then $\max_{\vpi}\left\{\bm{\bar z}^\top \vpi:~\vpi\in EP_{\tilde f}\right\}$ can be solved by the following ``greedy" algorithm: given $\bm{\bar z}\in\R^n$, first find an ordering $\sigma$ such that $\bar z_{\sigma(1)}\geq\cdots\geq\bar z_{\sigma(n)}$, and let $V_{t}:=\{\sigma(1),\ldots,\sigma(t)\}$ for $t\in[n]$ and $V_{0}=\emptyset$. Then, $\vpi\in\R^n$ where $\pi_{\sigma(t)}=f(V_{t})-f(V_{t-1})$ for $t\in[n]$ is an optimal solution to $\max_{\vpi}\left\{\bm{\bar z}^\top \vpi:~\vpi\in EP_{\tilde f}\right\}$. Note that the implementation of this algorithm basically requires a sorting algorithm to compute the desired ordering $\sigma$, and this can be done in $O(n\log n)$ time. Thus,  the overall complexity of this algorithm is $O(n\log n)$. 

Consequently, given a point $({\bar y},\bm{\bar z})\in\R\times \R^n$, separating a violated polymatroid inequality amounts to solving the optimization problem $\max_{\vpi}\left\{\bm{\bar z}^\top \vpi :~\vpi\in EP_{\tilde f}\right\}$, and thus we arrive at the following result.
\begin{corollary}[{\citet[Section 2]{atamturk2008polymatroids}}]\label{cor:separation}
	Let $f:\{0,1\}^n\to\R$ be a submodular function. Then the separation problem for polymatroid inequalities can be solved in $O(n\log n)$ time.
\end{corollary}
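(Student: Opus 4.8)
The plan is to reduce the separation problem to a single linear optimization problem over the extended polymatroid $EP_{\tilde f}$, which is exactly the problem that the greedy algorithm of Theorem~\ref{thm:edmonds} solves.

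First I would observe that, given a query point $(\bar y,\bm{\bar z})\in\R\times\R^n$, the entire family of polymatroid inequalities $\bar y\geq\vpi^\top\bm{\bar z}+f(\emptyset)$ indexed by $\vpi\in EP_{\tilde f}$ holds simultaneously if and only if
\[
\bar y-f(\emptyset)\;\geq\;\max_{\vpi\in EP_{\tilde f}}\vpi^\top\bm{\bar z}.
\]
Thus producing a most violated inequality, or certifying that none exists, is equivalent to solving $\max\{\vpi^\top\bm{\bar z}:\vpi\in EP_{\tilde f}\}$ and comparing its optimal value against $\bar y-f(\emptyset)$: if the optimum is at most $\bar y-f(\emptyset)$, then $(\bar y,\bm{\bar z})$ satisfies all polymatroid inequalities, and otherwise any optimal $\vpi^\ast$ supplies a violated cut. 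This is the reduction already anticipated in the paragraph preceding the statement.

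Next I would invoke the algorithmic content of Theorem~\ref{thm:edmonds}. Since $\vpi^\top\bm{\bar z}$ is linear in $\vpi$ and $EP_{\tilde f}$ is a polyhedron, whenever the maximum is finite it is attained at an extreme point, and the greedy procedure returns an optimal one: sort so that $\bar z_{\sigma(1)}\geq\cdots\geq\bar z_{\sigma(n)}$, set $V_t=\{\sigma(1),\ldots,\sigma(t)\}$ with $V_0=\emptyset$, and output $\pi^\ast_{\sigma(t)}=f(V_t)-f(V_{t-1})$. The constant $f(\emptyset)$ cancels in these marginal differences, so it is immaterial whether $f$ or $\tilde f$ is used; the correctness of this step is precisely the greedy-optimality statement established right after Theorem~\ref{thm:edmonds}.

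Finally I would account for the running time: the sort costs $O(n\log n)$, while the remaining work—evaluating the $n$ marginal differences $f(V_t)-f(V_{t-1})$ through the value oracle and forming the inner product $\bm{\bar z}^\top\vpi^\ast$—is $O(n)$, so the dominant term is the sort and the total is $O(n\log n)$. The only point requiring care, and the closest thing to an obstacle, is that the linear program is unbounded exactly when some $\bar z_i<0$; in that regime an inequality is violated for every finite $\bar y$, which is consistent with $\conv(Q_f)\subseteq\R\times[0,1]^n$ and is detected for free by inspecting the smallest sorted coordinate $\bar z_{\sigma(n)}$, so it does not affect the complexity claim.
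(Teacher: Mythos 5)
Your proposal is correct and follows essentially the same route as the paper: the text immediately preceding the corollary reduces separation to solving $\max\{\bm{\bar z}^\top \vpi : \vpi\in EP_{\tilde f}\}$ via Edmonds' greedy algorithm, whose cost is dominated by the $O(n\log n)$ sort. Your additional remarks — that $f(\emptyset)$ cancels in the marginal differences and that unboundedness when some $\bar z_i<0$ is detected for free — are accurate refinements the paper leaves implicit.
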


\subsection{Joint submodular constraints}\label{sec:jointSubmodularity}

In this section, we establish tools that will be useful throughout this paper. Recall that when $f$ is submodular, the convex hull of its epigraph $Q_f$ is described by the extremal polymatroid inequalities of $f$. Henceforth, we use the restriction $(y,\vz)\in \conv(Q_f)$ as a constraint to indicate the inclusion of the corresponding extremal polymatroid inequalities of $f$ in the constraint set.

Let $f_1,\ldots,f_k:\{0,1\}^n\to\R$ be $k$ submodular functions. Let us examine the convex hull of the following mixed-integer set:
\[
Q_{f_1,\ldots,f_k}:=\left\{(\vy,\vz)\in\R^k\times\{0,1\}^n:~y_1\geq f_1(\vz),\ldots,y_k\geq f_k(\vz)\right\}.
\]
When $k=1$, the set $Q_{f_1}$ is just the epigraph of the submodular function $f_1$ on $\{0,1\}^n$. For general $k$, $Q_{f_1,\ldots,f_k}$ is described by $k$ submodular functions that share the same set of binary variables. For $(\vy,\vz)\in Q_{f_1,\ldots,f_k}$, constraint $y_j\geq f_j(\vz)$ can be replaced with $(y_j,\vz)\in Q_{f_j}$ for $j\in[k]$. Therefore, the polymatroid inequalities of $f_j$ with left-hand side $y_j$, of the form $y_j\geq \vpi^\top  \vz+f_j(\emptyset)$ with $\vpi\in EP_{\tilde{f_j}}$, are valid for $Q_{f_1,\ldots,f_k}$. In fact, these inequalities are sufficient to describe $\conv(Q_{f_1,\ldots,f_k})$ as well. 

\begin{proposition}[{\citet[Theorem 2]{baumann2013submodular}}]\label{prop:intersection1}
Let the functions $f_1,\ldots,f_k:\{0,1\}^n\rightarrow\R$ be submodular. Then, 
\[
\conv\left(Q_{f_1,\ldots,f_k}\right)=\left\{(\vy,\vz)\in\R^k\times[0,1]^n:~(y_j,\vz)\in \conv(Q_{f_j}),~\forall j\in[k]\right\}.
\]
\end{proposition}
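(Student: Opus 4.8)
The plan is to prove the two inclusions separately, with the forward one being routine and the reverse one carrying all the content. Write $P$ for the polyhedron on the right-hand side, i.e.\ $P=\{(\vy,\vz)\in\R^k\times[0,1]^n:(y_j,\vz)\in\conv(Q_{f_j})\ \forall j\in[k]\}$. For $\conv(Q_{f_1,\ldots,f_k})\subseteq P$, I would simply note that every integer point of $Q_{f_1,\ldots,f_k}$ satisfies $\vz\in\{0,1\}^n\subseteq[0,1]^n$ and $y_j\geq f_j(\vz)$, hence $(y_j,\vz)\in Q_{f_j}\subseteq\conv(Q_{f_j})$ for each $j$; thus $Q_{f_1,\ldots,f_k}\subseteq P$, and since $P$ is an intersection of convex sets and therefore convex, the inclusion of convex hulls follows.

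The reverse inclusion $P\subseteq\conv(Q_{f_1,\ldots,f_k})$ is the crux, and I would prove it constructively by exhibiting, for each $(\vy^*,\vz^*)\in P$, an explicit representation as a convex combination of points of $Q_{f_1,\ldots,f_k}$. Since $\vz^*\in[0,1]^n$, Theorem~\ref{thm:lovasz} tells us that $(y_j,\vz^*)\in\conv(Q_{f_j})$ is equivalent to $y_j\geq g_j(\vz^*)$, where $g_j(\vz^*):=\max_{\vpi\in EP_{\tilde{f_j}}}\{\vpi^\top\vz^*+f_j(\emptyset)\}$ is the value returned by Edmonds' greedy algorithm (described after Theorem~\ref{thm:edmonds}). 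The key structural observation---and the whole reason the $k$ separate convex hulls are compatible---is that the greedy algorithm orders the coordinates according to $\vz^*$ alone, independently of $f_j$. Hence if $\sigma$ is a permutation with $z^*_{\sigma(1)}\geq\cdots\geq z^*_{\sigma(n)}$ and $V_t:=\{\sigma(1),\ldots,\sigma(t)\}$, $V_0:=\emptyset$, then the single complete chain $\emptyset=V_0\subsetneq V_1\subsetneq\cdots\subsetneq V_n=[n]$ is greedy-optimal for every $f_j$ simultaneously, giving $g_j(\vz^*)=(\vz^*)^\top\vpi^{(j)}+f_j(\emptyset)$ with $\pi^{(j)}_{\sigma(t)}=f_j(V_t)-f_j(V_{t-1})$.

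I would then build the convex combination from this common chain. Set $\lambda_0:=1-z^*_{\sigma(1)}$, $\lambda_t:=z^*_{\sigma(t)}-z^*_{\sigma(t+1)}$ for $t\in[n-1]$, and $\lambda_n:=z^*_{\sigma(n)}$; since $\vz^*\in[0,1]^n$ is sorted, these are nonnegative and sum to $1$. A direct telescoping check gives the two identities $\vz^*=\sum_{t=0}^n\lambda_t\bm{1}_{V_t}$ and, by an Abel summation matching the greedy $\vpi^{(j)}$ above, $g_j(\vz^*)=\sum_{t=0}^n\lambda_t f_j(V_t)$ for every $j\in[k]$ at once. Writing $s_j:=y^*_j-g_j(\vz^*)\geq0$, the points $\bigl((f_1(V_t)+s_1,\ldots,f_k(V_t)+s_k),\,\bm{1}_{V_t}\bigr)$ lie in $Q_{f_1,\ldots,f_k}$ for each $t$ (they satisfy each defining inequality with slack $s_j\geq0$), and the same weights $\lambda_t$ combine them into exactly $(\vy^*,\vz^*)$. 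This realizes $(\vy^*,\vz^*)\in\conv(Q_{f_1,\ldots,f_k})$ and completes the reverse inclusion.

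The main obstacle I anticipate is nailing the simultaneity in the previous paragraph: one must verify that a single set of weights $\lambda_t$ reproduces both the fractional vector $\vz^*$ and all $k$ of the envelope values $g_j(\vz^*)$---this is precisely where the $\vz^*$-determined (rather than $f_j$-determined) greedy ordering is used, and it is special to the shared-$\vz$ structure. The two telescoping and Abel-summation identities are routine once the common chain is in place, and the slack-variable trick handles a non-minimal $\vy^*$ without having to invoke recession directions explicitly.
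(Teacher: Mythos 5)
Your proof is correct. Both inclusions are handled properly, and the crux --- that Edmonds' greedy ordering is determined by $\vz^*$ alone, so one chain $\emptyset=V_0\subsetneq\cdots\subsetneq V_n=[n]$ simultaneously certifies $y_j^*\geq g_j(\vz^*)$ for all $j$ and yields weights $\lambda_t$ with $\vz^*=\sum_t\lambda_t\bm{1}_{V_t}$ and $g_j(\vz^*)=\sum_t\lambda_t f_j(V_t)$ for every $j$ --- is exactly the right structural observation; the telescoping identities and the slack shift $s_j=y_j^*-g_j(\vz^*)\geq 0$ check out, including the boundedness of $\max_{\vpi\in EP_{\tilde f_j}}\vpi^\top\vz^*$ for $\vz^*\in[0,1]^n$.

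Your route does differ from the paper's. The paper does not prove Proposition~\ref{prop:intersection1} at all --- it is imported from \citet[Theorem 2]{baumann2013submodular} --- and instead proves the generalization, Proposition~\ref{prop:intersection2}, by a dual argument: take an arbitrary valid inequality $\valpha^\top\vy+\vbeta^\top\vz\geq\gamma$ for the mixed-integer set, use the recession cone and Farkas' lemma to write $\valpha=\sum_j c_j\bm{a_j}$ with $\vc\geq\bm{0}$, pass to the single aggregated submodular function $f_{\valpha}=\sum_j c_jf_j$, and then decompose each extreme point of $EP_{\tilde{f_{\valpha}}}$ as $\sum_j c_j\bm{\vpi^j}$ with the $\bm{\vpi^j}$ extreme points of the individual $EP_{\tilde{f_j}}$ sharing a common permutation. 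So the same ``common chain'' fact appears in both arguments, but you deploy it on the primal side (an explicit inner, convex-combination description), while the paper deploys it on the outer, valid-inequality side. Your primal construction is arguably more self-contained and elementary for the unit-vector case, and it produces an explicit decomposition of any point of $P$; the paper's dual machinery is what makes the extension to weakly independent aggregations $\bm{a_j}^\top\vy$ (Proposition~\ref{prop:intersection2}) go through, where an explicit primal decomposition would be harder to write down because the minimal $\vy$ over the face $\{\vy:\bm{a_j}^\top\vy=f_j(V_t)\}$ is no longer coordinatewise separable.
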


By Proposition~\ref{prop:intersection1}, when $f_1,\ldots,f_k$ are submodular, $\conv\left(Q_{f_1,\ldots,f_k}\right)$ can be described by the polymatroid inequalities of $f_j$ with left-hand side $y_j$ for $j\in[k]$. The submodularity requirement on all of the functions $f_j$ in Proposition~\ref{prop:intersection1} is indeed critical. We demonstrate in the next example that even when $k=2$, and only one of the functions $f_i$ is not submodular, we can no longer describe the corresponding convex hull using the polymatroid inequalities for $f_j$. 

\begin{example}\label{ex:submodularityNecessity}
Let $f_1,f_2:\{0,1\}^2\rightarrow\R$ be defined by 
\[
f_1(0,0)=f_1(1,1)=0,~f_1(0,1)=f_1(1,0)=1\quad\text{and}\quad f_2(0,0)=f_2(1,1)=1,~f_2(0,1)=f_2(1,0)=0.
\]
While $f_1$ is submodular, $f_2$ is not. Since $f_1(0,0)=f_1(1,1)=0$, we deduce that $(0,1/2,1/2)\in \conv(Q_{f_1})$. Similarly, as $f_2(0,1)=f_2(1,0)=0$, it follows that $(0,1/2,1/2)\in \conv(Q_{f_2})$. This implies that 
\[
(0,0,1/2,1/2)\in\left\{(\vy,\vz)\in\R^2\times[0,1]^2:~(y_1,\vz)\in \conv(Q_{f_1}),~(y_2,\vz)\in \conv(Q_{f_2})\right\}.
\]
Notice that, by definition of $f_1,f_2$, we have $f_1(\vz)+f_2(\vz)=1$ for each $\vz\in\{0,1\}^2$, implying in turn that $y_1+y_2\geq 1$ is valid for $\conv\left(Q_{f_1,f_2}\right)$. Therefore, the point $(0,0,1/2,1/2)$ cannot be in $\conv\left(Q_{f_1,f_2}\right)$. So, it follows that $\conv\left(Q_{f_1,f_2}\right)\neq \left\{(\vy,\vz)\in\R^2\times[0,1]^2:~(y_j,\vz)\in \conv(Q_{f_j}),~\forall j\in[2]\right\}$.
\end{example}

In Section~\ref{sec:mixing}, we will discuss how Proposition~\ref{prop:intersection1} can be used to provide the convex hull description of a joint mixing set $\cM(\vW,\bm{0},0)$. 

We next highlight a slight generalization of Proposition~\ref{prop:intersection1} that is of interest for studying $\cM(\vW,\vell,\varepsilon)$. Observe that $Q_{f_1,\ldots,f_k}$ is defined by multiple submodular constraints with independent continuous variables $y_j$. We can replace this independence condition by a certain type of dependence. Consider the following mixed-integer set:
\begin{equation}\label{eq:mult-dep-sub}
\cP=\left\{(\vy,\vz)\in \R^k\times \{0,1\}^n:~\bm{a_1}^\top \vy \geq f_1(\vz),\ldots,\bm{a_m}^\top \vy\geq f_m(\vz)\right\}
\end{equation}
where $\bm{a_1},\ldots,\bm{a_m}\in\R_+^k\setminus\{\bm{0}\}$ and $f_1,\ldots, f_m:\{0,1\}^n\rightarrow\R$ are submodular functions. Here, $m$ can be larger than $k$, so $\bm{a_1},\ldots,\bm{a_m}$ need not be linearly independent. Now consider $\valpha=\sum_{j\in[m]}c_j \bm{a_j}$ for some $\vc\in\R_+^m$. Notice that $f_{\valpha}\geq \sum_{j\in[m]}c_jf_j$ where $f_{\valpha}:\{0,1\}^n\to\R$ is defined as
\begin{equation}\label{lb-alphay}
f_{\valpha}(\vz):=\min\left\{\valpha^\top \vy:~(\vy,\vz)\in \mathcal P\right\},\quad\forall \vz\in\{0,1\}^n.
\end{equation}
\begin{definition}\label{def:weakindep}
We say that $\bm{a_1}^\top\vy,\ldots,\bm{a_m}^\top\vy$ are {\it weakly independent with respect to $f_1,\ldots,f_m$} if for any $\valpha=\sum_{j\in[m]}c_j\bm{a_j}$ with $\vc\in\R_+^m$, we have $f_{\valpha}=\sum_{j\in[m]}c_jf_j$. 
\end{definition}
It is straightforward to see that if $\bm{a_1},\ldots,\bm{a_m}$ are distinct unit vectors, i.e., $m=k$ and $\bm{a_j}^\top \vy=y_j$ for $j\in[k]$, then $\bm{a_1}^\top \vy,\ldots, \bm{a_m}^\top \vy$ are weakly independent. It is also easy to see that if $\bm{a_1},\ldots,\bm{a_m}$ are linearly independent, then $\bm{a_1}^\top  \vy,\ldots,\bm{a_m}^\top \vy$ are weakly independent. Based on this definition, we have the following slight extension of Proposition~\ref{prop:intersection1}.

\begin{proposition}\label{prop:intersection2}
Let $\cP$ be defined as in~\eqref{eq:mult-dep-sub}. If $\bm{a_1}^\top \vy,\ldots, \bm{a_m}^\top \vy$ are weakly independent with respect to $f_1,\ldots,f_m$, then
\[
\conv\left(\cP\right)=\left\{(\vy,\vz)\in\R^k\times[0,1]^n:~(\bm{a_j}^\top \vy,\vz)\in \conv(Q_{f_j}),~\forall j\in[m]\right\}.
\]
\end{proposition}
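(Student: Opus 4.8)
The plan is to show the two inclusions separately, denoting the right-hand side by $\cR:=\{(\vy,\vz)\in\R^k\times[0,1]^n:(\bm{a_j}^\top\vy,\vz)\in\conv(Q_{f_j}),~\forall j\in[m]\}$. The inclusion $\conv(\cP)\subseteq\cR$ is the easy one: for any $(\vy,\vz)\in\cP$ the integrality $\vz\in\{0,1\}^n$ and the defining inequalities give $(\bm{a_j}^\top\vy,\vz)\in Q_{f_j}\subseteq\conv(Q_{f_j})$ for all $j$, so $\cP\subseteq\cR$; and $\cR$ is convex, being an intersection of the box $\R^k\times[0,1]^n$ with preimages of the convex sets $\conv(Q_{f_j})$ under the linear maps $\vy\mapsto\bm{a_j}^\top\vy$. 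Both $\cR$ and $\conv(\cP)$ are polyhedra (the former by Theorem~\ref{thm:lovasz}), so it suffices to prove the reverse inclusion by showing that \emph{every} valid inequality for $\conv(\cP)$ is also valid on $\cR$.

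So I would fix a valid inequality $\boldsymbol\gamma^\top\vy+\boldsymbol\eta^\top\vz\geq\beta$ for $\cP$. The first step is to pin down $\boldsymbol\gamma$. Since $\bm{a_j}\in\R_+^k$ and $\cP$ is cut out by lower bounds, the recession cone of $\conv(\cP)$ in the $\vy$-coordinates is exactly $\{\vd:\bm{a_j}^\top\vd\geq0~\forall j\}$, and validity forces $\boldsymbol\gamma^\top\vd\geq0$ over this cone. By the Farkas lemma, the dual of this cone is the conical hull of the $\bm{a_j}$, so $\boldsymbol\gamma=\sum_{j\in[m]}c_j\bm{a_j}=\valpha$ for some $\vc\in\R_+^m$. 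Next I would convert validity into a statement about the functions themselves: for each fixed $\vz\in\{0,1\}^n$, minimizing over the (nonempty) fiber $\{\vy:(\vy,\vz)\in\cP\}$ and using definition~\eqref{lb-alphay} gives $f_{\valpha}(\vz)+\boldsymbol\eta^\top\vz\geq\beta$. This is precisely where Definition~\ref{def:weakindep} enters: weak independence yields $f_{\valpha}=\sum_{j\in[m]}c_jf_j$, so that
\[
\sum_{j\in[m]}c_jf_j(\vz)+\boldsymbol\eta^\top\vz\geq\beta,\qquad\forall\vz\in\{0,1\}^n.
\]

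For the final step, take any $(\vy,\vz)\in\cR$ and set $s_j:=\bm{a_j}^\top\vy$, so that $(s_j,\vz)\in\conv(Q_{f_j})$ for all $j$. By Proposition~\ref{prop:intersection1} applied to $f_1,\dots,f_m$, this means $(\vs,\vz)\in\conv(Q_{f_1,\ldots,f_m})$. Writing this point through the Minkowski--Weyl form — as $\vz=\sum_i\lambda_i\vz^i$ and $\vs=\sum_i\lambda_i\vs^i+\vr$ with $(\vs^i,\vz^i)\in Q_{f_1,\ldots,f_m}$, $\lambda_i\geq0$, $\sum_i\lambda_i=1$, and recession part $\vr\in\R_+^m$ (the recession cone being $\R_+^m\times\{\bm0\}$) — I would compute
\[
\boldsymbol\gamma^\top\vy+\boldsymbol\eta^\top\vz=\sum_{j}c_js_j+\boldsymbol\eta^\top\vz=\sum_i\lambda_i\Big(\sum_j c_js_j^i+\boldsymbol\eta^\top\vz^i\Big)+\sum_j c_jr_j.
\]
The trailing term is nonnegative since $\vc,\vr\geq\bm0$; and since $s_j^i\geq f_j(\vz^i)$ with $c_j\geq0$, each bracketed term is at least $\sum_j c_jf_j(\vz^i)+\boldsymbol\eta^\top\vz^i\geq\beta$ by the displayed inequality above. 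Hence the whole expression is $\geq\beta$, so the inequality is valid on $\cR$, giving $\cR\subseteq\conv(\cP)$ and completing the proof.

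I expect the main obstacle to be the translation step that forces the use of weak independence. In general one only has $f_{\valpha}\geq\sum_j c_jf_j$, and this inequality points the wrong way: it would let validity control $f_{\valpha}$ but not the combination $\sum_j c_jf_j$ that appears when I expand $\sum_j c_js_j^i\geq\sum_j c_jf_j(\vz^i)$ along the convex decomposition. Only the \emph{equality} $f_{\valpha}=\sum_j c_jf_j$ makes the per-scenario bound on $\sum_j c_jf_j(\vz^i)$ compatible with the lower bound extracted from validity, which is exactly the content of Example~\ref{ex:submodularityNecessity} at the level of why plain submodularity/weak independence cannot be dropped. A minor technical point to address in passing is that $f_{\valpha}$ is finite for $\valpha$ in the conical hull of the $\bm{a_j}$ (by LP duality on the fiber), which is what keeps the minimization in~\eqref{lb-alphay} well-defined throughout.
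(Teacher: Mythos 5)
Your proof is correct. The first half coincides with the paper's: both arguments reduce the reverse inclusion to checking an arbitrary valid inequality $\boldsymbol\gamma^\top\vy+\boldsymbol\eta^\top\vz\geq\beta$ for $\conv(\cP)$, both use the recession cone of $\conv(\cP)$ together with Farkas' lemma to write $\boldsymbol\gamma=\sum_{j\in[m]}c_j\bm{a_j}$ with $\vc\in\R_+^m$, and both invoke weak independence at exactly the same point to replace $f_{\valpha}$ by $\sum_j c_jf_j$. Where you diverge is the closing step. The paper stays on the dual side: it forms the epigraph $\cQ$ of $f_{\valpha}$, notes $f_{\valpha}$ is submodular, and uses Theorem~\ref{thm:edmonds} to decompose each extreme point $\vpi$ of $EP_{\tilde{f_{\valpha}}}$ as $\sum_j c_j\bm{\vpi^j}$ with the $\bm{\vpi^j}$ extreme points of $EP_{\tilde{f_j}}$ arising from the \emph{same} permutation, so that the extremal polymatroid inequality of $f_{\valpha}$ is a conic combination of inequalities already known to hold on $\cR$. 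You instead work on the primal side: you invoke Proposition~\ref{prop:intersection1} to upgrade the per-coordinate memberships $(\bm{a_j}^\top\vy,\vz)\in\conv(Q_{f_j})$ to $(\vs,\vz)\in\conv(Q_{f_1,\ldots,f_m})$, then verify the inequality directly on a Minkowski--Weyl decomposition of that point, using $f_{\valpha}(\vz)+\boldsymbol\eta^\top\vz\geq\beta$ on each integral vertex $\vz^i$. Your route avoids Edmonds' greedy characterization entirely (and never needs to observe that $f_{\valpha}$ is submodular), at the cost of leaning on Proposition~\ref{prop:intersection1}, which the paper's proof of this proposition does not use; conversely, the paper's argument is self-contained modulo Theorems~\ref{thm:lovasz} and~\ref{thm:edmonds} and makes the ``same permutation'' decomposition of the polymatroid explicit, which is reused later in the paper. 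Your parenthetical observations — that the fibers are nonempty and $f_{\valpha}$ is finite so the minimization in~\eqref{lb-alphay} is well posed, and that extreme points of $\conv(Q_{f_1,\ldots,f_m})$ lie in $Q_{f_1,\ldots,f_m}$ while its recession cone is $\R_+^m\times\{\bm{0}\}$ — are the right technical points to check and all hold.
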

\begin{proof}
Define $\cR:=\left\{(\vy,\vz)\in\R^k\times[0,1]^n:~(\bm{a_j}^\top \vy,\vz)\in \conv(Q_{f_j}),~\forall j\in[m]\right\}$. 
It is clear that $\conv\left(\cP\right)\subseteq \cR$. For the direction $\conv\left(\cP\right)\supseteq \cR$, we need to show that any inequality $\valpha^\top \vy+{\vbeta}^\top \vz\geq\gamma$ valid for $\conv\left(\cP\right)$ is also valid for $\cR$. To that end, take an inequality $\valpha^\top \vy+{\vbeta}^\top \vz\geq\gamma$ valid for $\conv\left(\cP\right)$. Note that every recessive direction of $\conv(\cP)$ is of the form $(\vr,\bm{0})$ for some $\vr\in\R^k$. Moreover, $(\vr,\bm{0})$ is a recessive direction of $\conv(\cP)$ if and only if $\vr$ satisfies $\bm{a_j}^\top \vr \geq 0$ for all $j\in[m]$. Since $\valpha^\top \vy+{\vbeta}^\top \vz\geq\gamma$ is valid for $\conv\left(\cP\right)$, $\valpha^\top \vr\geq 0$ for every recessive direction $(\vr,\bm{0})$ of $\conv(\cP)$, and therefore, $\valpha^\top \vr\geq 0$ holds for all $\vr\in\{\vr\in\R^k:\bm{a_j}^\top \vr \geq 0,~\forall j\in[m]\}$. Then, by Farkas' lemma, there exists some $\vc\in\R_+^m$ such that $\valpha=\sum_{j\in[m]}c_j\bm{a_j}$.
Moreover, $\valpha^\top \vy+{\vbeta}^\top \vz\geq \gamma$ is valid for
\[
\cQ:=\left\{(\vy,\vz)\in\R^k\times\{0,1\}^n:~\valpha^\top \vy\geq f_{\valpha}(\vz)\right\},
\]
where $f_{\valpha}$ is defined as in~\eqref{lb-alphay}. Since $\bm{a_1}^\top \vy,\ldots, \bm{a_m}^\top \vy$ are weakly independent with respect to $f_1,\ldots,f_m$, it follows that $f_{\valpha}=\sum_{j\in[m]}c_jf_j$, and therefore, $f_{\valpha}$ is submodular. Then it is not difficult to see that 
\[
\conv(\cQ)=\left\{(\vy,\vz)\in\R^k\times[0,1]^n:~(\valpha^\top \vy, \vz)\in\conv(Q_{f_{\valpha}})\right\}.
\]
Therefore, to show that $\valpha^\top \vy+{\vbeta}^\top \vz\geq \gamma$ is valid for $\cR$, it suffices to argue that $\cR\subseteq \conv(\cQ)$. Let $(\bm{\bar y},\bm{\bar z})\in \cR$. Then, by Theorem~\ref{thm:lovasz}, it suffices to show that $\valpha^\top \bm{\bar y}\geq \vpi^\top \bm{\bar z} +f_{\valpha}(\emptyset)$ holds for every extreme point $\vpi$ of $EP_{\tilde{f_{\valpha}}}$. To this end, take an extreme point $\vpi$ of $EP_{\tilde{f_{\valpha}}}$. By Theorem~\ref{thm:edmonds}, there exists a permutation $\sigma$ of $[n]$ such that $\pi_{\sigma(t)}=f_{\valpha}(V_{t})-f_{\valpha}(V_{t-1})$ where $V_{t}=\{\sigma(1),\ldots,\sigma(t)\}$ for $t\in[n]$ and $V_{0}=\emptyset$. Now, for $j\in[m]$, let $\bm{\vpi^j}\in\R^n$ be the vector such that $\bm{\vpi^j}_{\sigma(t)}=f_j(V_t)-f_j(V_{t-1})$ for $t\in[n]$. Then, we have $\vpi=\sum_{j\in[m]}c_j\bm{\vpi^j}$ because $f_{\valpha}=\sum_{j\in[m]}c_jf_j$. Moreover, by Theorem~\ref{thm:edmonds}, $\bm{\vpi^j}$ is an extreme point of $EP_{\tilde{f_j}}$. Hence, due to our assumption that $(\bm{a_j}^\top \bm{\bar y},\bm{\bar z})\in \conv(Q_{f_j})$, Theorem~\ref{thm:lovasz} implies $\bm{a_j}^\top \bm{\bar y}\geq(\bm{\vpi^j})^\top\bm{\bar z}+\pi_j(\emptyset)$ is valid for all $j\in[m]$. Since $\valpha^\top \bm{\bar y}\geq \vpi^\top \bm{\bar z} +f_{\valpha}(\emptyset)$ is obtained by adding up $\bm{a_j}^\top \bm{\bar y}\geq(\bm{\vpi^j})^\top \bm{\bar z}+\pi_j(\emptyset)$ for $j\in[m]$, it follows that $\valpha^\top \bm{\bar y}\geq \vpi^\top \bm{\bar z} +f_{\valpha}(\emptyset)$ is valid, as required. We just have shown that $\cR\subseteq \conv(\cQ)$, thereby completing the proof.
\end{proof}

In Section~\ref{sec:mixing-lb}, we will use Proposition~\ref{prop:intersection2} to study  the convex hull of $\cM(\vW,\mathbf 0,\varepsilon)$, i.e., a joint mixing set with a linking constraint. Again, the submodularity assumption on $f_1,\ldots,f_m$ is important in Proposition~\ref{prop:intersection2}. 
Recall that Example~\ref{ex:submodularityNecessity} demonstrates that in Proposition~\ref{prop:intersection2} even when $m$ is taken to be equal to $k$ and the vectors $\bm{a_j}\in \R_+^k\setminus\{\bf{0}\}$, $j\in[m]=[k]$, are taken to be the unit vectors in $\R^k$, the statement does not hold if one of the functions $f_j$ is not submodular.

\section{Mixing inequalities and joint mixing sets}\label{sec:mixing}
In this section, we establish that mixing sets with binary variables are indeed nothing but the epigraphs of certain submodular functions. In addition, through this submodularity lens, we prove that the well-known mixing (or star) inequalities for mixing sets are nothing but the extremal polymatroid inequalities.

Recall that a joint mixing set with lower bounds $\cM(\vW,\vell,\varepsilon)$, where $\vW\in\R^{n\times k}_+$, $\vell\in\R_+^k$ 
and $\varepsilon\geq 0$, is defined by~\eqref{eq:general-set}. In this section, we study the case when $\varepsilon=0$, and characterize the convex hull of $\cM(\vW,\vell,0)$ for any $\vW\in\R^{n\times k}_+$ and $\vell\in\R^k_+$. As corollaries, we prove that the famous star/mixing inequalities are in fact polymatroid inequalities, and we recover the result of  \citet[Theorem 3]{atamturk2000mixed} on joint mixing sets $\cM(\vW,\bm{0},0)$. 

Given a matrix $\vW=\{w_{i,j}\}\in\R^{n\times k}_+$ and a vector $\vell\in\R_+^k$, we define the following mixed-integer set:
\begin{equation}\label{twisted-mixingset}
\cP(\vW,\vell,\varepsilon)=\left\{(\vy,\vz)\in\R^k\times\{0,1\}^n:~
\eqref{bigM''}\--\eqref{linking''}
\right\}
\end{equation}
where
\begin{align}
&y_j\geq w_{i,j}z_i,&\forall i\in[n],~j\in[k], \label{bigM''}\\
&y_j\geq \ell_j,&\forall j\in[k], \label{lb''} \\
&\sum_{j\in[k]} y_j\geq \varepsilon + \sum_{j\in[k]}\ell_j. \label{linking''} 
\end{align}
\begin{remark}\label{rem:M-P-relation}
By definition, $(\vy,\vz)\in \cM(\vW,\vell,\varepsilon)$ if and only if $(\vy,\bm{1}-\vz)\in \cP(\vW,\vell,\varepsilon)$. 
Thus, the convex hull of $\cM(\vW,\vell,\varepsilon)$ can be obtained after taking the convex hull of $\cP(\vW,\vell,\varepsilon)$ and complementing the $z$ variables. 
\end{remark}

For $j\in[k]$, we define
\begin{equation}\label{y_j-submod}
f_j(\vz):=\max\left\{\ell_j,~\max\limits_{i\in[n]}\left\{w_{i,j}z_i\right\}\right\},\quad\forall \vz\in\{0,1\}^n.
\end{equation}
Then, the set $\cP(\vW,\vell,0)$ admits a representation as the intersection of epigraphs of the functions $f_j(\vz)$:
\[
\cP(\vW,\vell,0)=\left\{(\vy,\vz)\in\R^k\times\{0,1\}^n:~y_j\geq f_j(\vz),~\forall j\in[k]\right\}.
\]

We next establish that the functions $f_j(\vz), j\in[k]$ are indeed submodular. 
\begin{lemma}\label{lem:submodular1}
Let $\vell\in\R_+^k$. For each $j\in[k]$, the function $f_j$ defined as in~\eqref{y_j-submod} satisfies $f_j(\emptyset)=\ell_j$  and it is submodular.
\end{lemma}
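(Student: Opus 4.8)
The plan is to verify the two claims separately. The value $f_j(\emptyset)=\ell_j$ is immediate: when $\vz=\bm{0}$, every term $w_{i,j}z_i$ vanishes, so the inner maximum is $0$ (using the convention $\max_{i\in\emptyset}=0$ for the empty support), and hence $f_j(\bm{0})=\max\{\ell_j,0\}=\ell_j$ because $\ell_j\geq 0$. The substantive part is submodularity, which I would establish directly from the defining inequality $f_j(A)+f_j(B)\geq f_j(A\cup B)+f_j(A\cap B)$ for all $A,B\subseteq[n]$, interpreting $f_j$ as a set function via $f_j(S)=\max\{\ell_j,\max_{i\in S}w_{i,j}\}$.

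The key structural observation I would exploit is that $f_j$ is the pointwise maximum of $\ell_j$ (a constant) together with the ground-set functions $g_i(S):=w_{i,j}\,\mathbb 1[i\in S]$, i.e.\ $f_j(S)=\max\{\ell_j,\max_{i\in S}w_{i,j}\}$. Since $w_{i,j}\geq 0$, this is a monotone nondecreasing function whose value depends only on $\max_{i\in S}w_{i,j}$ (truncated below at $\ell_j$). The cleanest route to submodularity is to verify the equivalent \emph{diminishing-marginal-returns} characterization: for every $S\subseteq T\subseteq[n]$ and every $e\notin T$,
\[
f_j(S\cup\{e\})-f_j(S)\;\geq\; f_j(T\cup\{e\})-f_j(T).
\]
Here adding element $e$ can only raise the running maximum from $\max\{\ell_j,\max_{i\in S}w_{i,j}\}$ to $\max\{\ell_j,\max_{i\in S}w_{i,j},w_{e,j}\}$; since the baseline over the larger set $T$ is at least as large as over $S$, the marginal gain $(w_{e,j}-\text{baseline})_+$ is no larger for $T$ than for $S$. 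Writing $s:=\max\{\ell_j,\max_{i\in S}w_{i,j}\}$ and $t:=\max\{\ell_j,\max_{i\in T}w_{i,j}\}$, one has $s\leq t$ and the two marginals are exactly $(w_{e,j}-s)_+$ and $(w_{e,j}-t)_+$, so the inequality reduces to the elementary monotonicity of $x\mapsto(w_{e,j}-x)_+$ in $x$.

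I expect no genuine obstacle here; the only point requiring a little care is the bookkeeping with the lower bound $\ell_j$ and the empty-set convention, so that the baseline is always $\max\{\ell_j,\cdots\}$ rather than just the inner maximum. An alternative, perhaps even shorter, argument is to note that each function $S\mapsto\max\{\ell_j,w_{i,j}\,\mathbb 1[i\in S]\}$ is submodular (as a truncation/maximum of an affine-on-the-cube function), and that the pointwise maximum of functions of the form $\max\{\ell_j,w_{i,j}z_i\}$ over $i\in[n]$ preserves submodularity in this particular monotone setting; however, since the maximum of submodular functions is not submodular in general, I would present the direct marginal-returns verification above rather than appeal to a closure property, to keep the argument self-contained and correct.
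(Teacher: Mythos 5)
Your proof is correct, but it takes a different route from the paper's. You verify submodularity via the equivalent diminishing-marginal-returns characterization: writing the baselines $s=\max\{\ell_j,\max_{i\in S}w_{i,j}\}$ and $t=\max\{\ell_j,\max_{i\in T}w_{i,j}\}$ for $S\subseteq T$, the marginal gains of adding $e$ are $(w_{e,j}-s)_+$ and $(w_{e,j}-t)_+$, and $s\leq t$ gives the inequality. The paper instead works directly with the defining inequality on two arbitrary sets $U,V$ and uses the identity $f(U)+f(V)=\max\{f(U),f(V)\}+\min\{f(U),f(V)\}$ together with the two observations $\max\{f(U),f(V)\}=f(U\cup V)$ (exact, since the function depends only on the truncated maximum of the weights over the set) and $\min\{f(U),f(V)\}\geq f(U\cap V)$ (monotonicity); the submodular inequality then falls out in one line. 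The paper's argument is slightly more self-contained in that it does not invoke the equivalence between submodularity and diminishing returns, while yours makes the ``why'' more transparent (the marginal contribution of an element is a decreasing function of the current maximum). Your closing remark---that one should not appeal to a general closure property for maxima of submodular functions, since that property is false in general---is a sensible precaution and consistent with why the paper argues from scratch. Either proof is acceptable.
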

\begin{proof}
Let $j\in[k]$. %
Notice that $f_j(\emptyset)=f_j(\bm{0})=\max\left\{\ell_j,0\right\}=\ell_j$. In order to establish the submodularity of $f_j$, for ease of notation, we drop the index $j$ and use $f$ to denote $f_j$. As before, for each $V\subseteq[n]$, let $f(V)$ be defined as $f(\bm{1}_V)$ where $\bm{1}_V\in\{0,1\}^n$ denotes the characteristic vector of $V$. Consider two sets $U,V\subseteq[n]$. By definition of $f$, we have $\max\{f(U),f(V)\}=f(U\cup V)$, and $\min\{f(U),f(V)\}\geq f(U\cap V)$. Then we immediately get
\[
f(U) + f(V) = \max\{f(U),f(V)\} + \min\{f(U),f(V)\} \geq f(U\cup V) + f(U\cap V),
\] 
thereby proving that $f_j$ is submodular, as required.
\end{proof}

\begin{corollary}\label{cor:intersection1}
Let $\vell\in\R^k_+$ and $f_j$ be as defined in \eqref{y_j-submod}. Then,  
\begin{equation*}\label{joint-mixing}
\conv(\cM(\vW,\vell,0))=\left\{(\vy,\vz)\in\R^k\times[0,1]^n:~(y_j,\bm{1}-\vz)\in\conv(Q_{f_j}),~\forall j\in[k]\right\},
\end{equation*}
i.e., the convex hull of $\cM(\vW,\vell,0)$ is given by the extremal polymatroid inequalities of particular submodular functions.
\end{corollary}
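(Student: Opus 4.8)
The plan is to recognize $\cP(\vW,\vell,0)$ as a joint epigraph of submodular functions, invoke the machinery from Section~\ref{sec:jointSubmodularity} to convexify it, and finally transfer the description back to $\cM(\vW,\vell,0)$ by complementing the binary variables via Remark~\ref{rem:M-P-relation}. Essentially all the real work has already been done in Lemma~\ref{lem:submodular1} and Proposition~\ref{prop:intersection1}, so the corollary amounts to assembling these pieces together with a routine change of variables.

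First I would use the representation recorded just after~\eqref{y_j-submod}, namely the identity $\cP(\vW,\vell,0)=Q_{f_1,\ldots,f_k}$, where the $f_j$ are the functions defined in~\eqref{y_j-submod}. By Lemma~\ref{lem:submodular1}, each $f_j$ is submodular (with $f_j(\emptyset)=\ell_j$), so the hypotheses of Proposition~\ref{prop:intersection1} are satisfied. Applying that proposition directly gives
\[
\conv(\cP(\vW,\vell,0))=\left\{(\vy,\vz)\in\R^k\times[0,1]^n:~(y_j,\vz)\in\conv(Q_{f_j}),~\forall j\in[k]\right\}.
\]

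The remaining step is to pass from $\cP(\vW,\vell,0)$ to $\cM(\vW,\vell,0)$. By Remark~\ref{rem:M-P-relation}, the affine involution $\Phi:(\vy,\vz)\mapsto(\vy,\bm{1}-\vz)$ maps $\cM(\vW,\vell,0)$ bijectively onto $\cP(\vW,\vell,0)$. Since $\Phi$ is an affine bijection, it commutes with the convex-hull operation, so $(\vy,\vz)\in\conv(\cM(\vW,\vell,0))$ if and only if $\Phi(\vy,\vz)=(\vy,\bm{1}-\vz)\in\conv(\cP(\vW,\vell,0))$. Substituting $\bm{1}-\vz$ for $\vz$ in the description above—and noting that $\vz\in[0,1]^n$ exactly when $\bm{1}-\vz\in[0,1]^n$—yields precisely the claimed characterization of $\conv(\cM(\vW,\vell,0))$ through the constraints $(y_j,\bm{1}-\vz)\in\conv(Q_{f_j})$, which are the extremal polymatroid inequalities of the $f_j$ via Theorems~\ref{thm:lovasz} and~\ref{thm:edmonds}.

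I do not anticipate a genuine obstacle, since the submodularity and the intersection result carry the argument. The only point that warrants an explicit line is that an affine bijection commutes with taking convex hulls; this is standard, but I would state it, because the complementation map is exactly what bridges the $\cP$-world, in which submodularity lives, and the $\cM$-world in which the corollary is phrased.
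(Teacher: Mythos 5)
Your proposal is correct and follows exactly the paper's argument: identify $\cP(\vW,\vell,0)$ with $Q_{f_1,\ldots,f_k}$, apply Lemma~\ref{lem:submodular1} and Proposition~\ref{prop:intersection1} to convexify it, and transfer to $\cM(\vW,\vell,0)$ via the complementation map of Remark~\ref{rem:M-P-relation} together with Theorem~\ref{thm:lovasz}. Your explicit remark that an affine bijection commutes with taking convex hulls is a harmless elaboration of a step the paper leaves implicit.
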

\begin{proof}
We deduce from Proposition~\ref{prop:intersection1} that
\[
\conv(\cP(\vW,\vell,0))=\left\{(\vy,\vz)\in\R^k\times[0,1]^n:~(y_j,\vz)\in\conv(Q_{f_j}),~\forall j\in[k]\right\},
\]
which immediately implies the desired relation via Remark~\ref{rem:M-P-relation} and Theorem~\ref{thm:lovasz} since the constraint $(y_j,\bm{1}-\vz)\in\conv(Q_{f_j})$ is equivalent to the set of the corresponding extremal polymatroid inequalities.
\end{proof}
Corollary~\ref{cor:intersection1} establishes a strong connection between the mixing sets with binary variables and the epigraphs of submodular functions, and implies that the convex hull of joint mixing sets are given by the extremal polymatroid inequalities. To the best of our knowledge this connection between mixing sets with binary variables and submodularity has not been identified in the literature before.

An explicit characterization of the convex hull of a mixing set with binary variables in the original space has been studied extensively in the literature. Specifically, \citet{atamturk2000mixed} gave the explicit characterization of $\conv(\cM(\vW,\bm{0},0))$ in terms of the so called {\it mixing (star) inequalities}. Let us state the definition of these inequalities here. 
\begin{definition}\label{def:j-mixing-seq}
We call a sequence $\{j_1\to\cdots \to j_\tau\}$ of indices in $[n]$ a {\it $j$-mixing-sequence} if $w_{j_1,j}\geq w_{j_2,j}\geq\cdots\geq w_{j_\tau, j}\geq \ell_j$. 
\end{definition}
For $\vW=\{w_{i,j}\}\in\R^{n\times k}_+$ and $\vell\in\R_+^k$, the {\it mixing inequality derived from a $j$-mixing-sequence $\{j_1\to\cdots \to j_\tau\}$} is defined as the following (see \cite[Section~2]{gunluk2001mixing}):
\begin{equation}\label{mixing-ineqs}
y_j+\sum_{s\in[\tau]}(w_{j_s,j}-w_{j_{s+1},j})z_{j_s}\geq w_{j_1,j}\tag{$\text{Mix}_{\vW,\vell}$},
\end{equation}
where $w_{j_{\tau+1},j}:=\ell_j$ for convention.  
\citet[Proposition~3]{atamturk2000mixed} showed that the inequality~\eqref{mixing-ineqs} for any $j$-mixing-sequence $\{j_1\to\cdots \to j_\tau\}$ is valid for $\cM(\vW,\vell,0)$ when $\vell=\bm{0}$. 
\citet[Theorem 2]{luedtke2014branch-and-cut} later observed that the inequality~\eqref{mixing-ineqs} for any $j$-mixing-sequence $\{j_1\to\cdots \to j_\tau\}$ is valid for $\cM(\vW,\vell,0)$ for any $\vell\in\R^k_+$. 

Given these results from the literature on the convex hull characterizations of mixing sets and Corollary~\ref{cor:intersection1}, it is plausible to think that there must be a strong connection between the extremal polymatroid inequalities and the mixing (star) inequalities~\eqref{mixing-ineqs}. 
We next argue that the extremal polymatroid inequalities given by the constraint $(y_j,\bm{1}-\vz)\in\conv(Q_{f_j})$ are precisely the mixing (star) inequalities.

\begin{proposition}\label{prop:coeff1}
	Let $\vW=\{w_{i,j}\}\in\R^{n\times k}_+$ and $\vell\in\R^k_+$. Consider any $j\in[k]$. Then, for every extreme point $\vpi$ of $EP_{\tilde{f}_j}$, there exists a $j$-mixing-sequence $\{j_1\to\cdots \to j_\tau\}$ in $[n]$ that satisfies the following:
	\begin{enumerate}[(1)]
		\item $w_{j_1,j}=\max\left\{w_{i,j}:i\in[n]\right\}$,
		\item the corresponding polymatroid inequality $y_j +\sum_{i\in[n]}\pi_i z_i \geq \ell_j +\sum_{i\in[n]}\pi_i$ is equivalent to
		the mixing inequality~\eqref{mixing-ineqs} derived from the sequence $\{j_1\to\cdots \to j_\tau\}$.
	\end{enumerate}
	In particular, for any $j\in[k]$, the extremal polymatroid inequality is of the form
	\begin{equation}\label{extremal-mixing}
	y_j+\sum_{s\in[\tau]}(w_{j_s,j}-w_{j_{s+1},j})z_{j_s}\geq \max\left\{w_{i,j}:i\in[n]\right\}, \tag{$\text{Mix}_{\vW,\vell}^*$}
	\end{equation}
	where $w_{j_1,j}=\max\left\{w_{i,j}:i\in[n]\right\}$ and $w_{j_{\tau+1},j}:=\ell_j$. 
\end{proposition}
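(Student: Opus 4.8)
The plan is to invoke the explicit description of the extreme points of $EP_{\tilde{f}_j}$ from Theorem~\ref{thm:edmonds} and then read off the resulting polymatroid inequality coefficient-by-coefficient, matching it against the mixing inequality~\eqref{mixing-ineqs}. First I would recall from Lemma~\ref{lem:submodular1} that, viewed as a set function, $f_j(V)=\max\{\ell_j,\max_{i\in V}w_{i,j}\}$ with $f_j(\emptyset)=\ell_j$. By Theorem~\ref{thm:edmonds}, any extreme point $\vpi$ of $EP_{\tilde{f}_j}$ arises from a permutation $\sigma$ of $[n]$ via $\pi_{\sigma(t)}=f_j(V_t)-f_j(V_{t-1})$, where $V_t=\{\sigma(1),\dots,\sigma(t)\}$. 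Since $f_j(V_t)=\max\{\ell_j,\max_{s\le t}w_{\sigma(s),j}\}$ is exactly the running maximum of the $w$-values floored at $\ell_j$, each increment $\pi_{\sigma(t)}$ is nonnegative and is strictly positive precisely at the positions where $w_{\sigma(t),j}$ attains a new running maximum strictly exceeding $f_j(V_{t-1})$.

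Next I would extract these ``record'' positions. Let $t_1<\cdots<t_\tau$ be the positions $t$ with $w_{\sigma(t),j}>f_j(V_{t-1})$, and set $r_s:=\sigma(t_s)$. By construction the record values are strictly increasing, $w_{r_1,j}<\cdots<w_{r_\tau,j}$, all strictly above $\ell_j$, while $\pi_i=0$ for every non-record index $i$. Telescoping over the constant stretches between consecutive records gives $\pi_{r_s}=w_{r_s,j}-w_{r_{s-1},j}$ for $s\ge 2$ and $\pi_{r_1}=w_{r_1,j}-\ell_j$, together with $\sum_{i\in[n]}\pi_i=f_j([n])-f_j(\emptyset)=\max_{i\in[n]}w_{i,j}-\ell_j$. (Here I would flag the degenerate case $\max_i w_{i,j}\le\ell_j$, in which $\tilde{f}_j\equiv 0$, $\vpi=\bm{0}$, and the inequality is the lower bound $y_j\ge\ell_j$; otherwise the global maximizer is eventually a record, so $w_{r_\tau,j}=\max_i w_{i,j}$.) I would then define the candidate $j$-mixing-sequence by reversing the record order, $j_s:=r_{\tau-s+1}$, so that $w_{j_1,j}\ge\cdots\ge w_{j_\tau,j}\ge\ell_j$ and $w_{j_1,j}=w_{r_\tau,j}=\max_i w_{i,j}$; this makes $\{j_1\to\cdots\to j_\tau\}$ a $j$-mixing-sequence in the sense of Definition~\ref{def:j-mixing-seq} and establishes claim~(1).

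Finally I would verify claim~(2) by comparing coefficients. The polymatroid inequality associated with $\vpi$ is, as stated, $y_j+\sum_{i\in[n]}\pi_i z_i\ge\ell_j+\sum_{i\in[n]}\pi_i$; by the previous step its right-hand side equals $\max_i w_{i,j}=w_{j_1,j}$, matching the right-hand side of~\eqref{mixing-ineqs}. For the coefficients, only the record indices $z_{r_s}=z_{j_{\tau-s+1}}$ carry nonzero weight, and under the convention $w_{j_{\tau+1},j}=\ell_j$ the mixing coefficient of $z_{j_{\tau-s+1}}$ is $w_{j_{\tau-s+1},j}-w_{j_{\tau-s+2},j}=w_{r_s,j}-w_{r_{s-1},j}=\pi_{r_s}$ (with the $s=1$ boundary term $w_{j_{\tau+1},j}=\ell_j$ playing the role of $r_0$), so the two inequalities coincide term by term. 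The main obstacle is purely organizational: keeping the record subsequence, its reversal, and the ``$j_{\tau+1}\leftrightarrow\ell_j$'' boundary convention consistently aligned so that the telescoping increments match the mixing coefficients exactly. Once this bookkeeping is set up, each identity is immediate, and the final displayed form~\eqref{extremal-mixing} follows by substituting $w_{j_1,j}=\max_i w_{i,j}$.
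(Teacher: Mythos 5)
Your proposal is correct and follows essentially the same route as the paper's proof: invoke Theorem~\ref{thm:edmonds}, observe that $f_j(V_t)$ is the running maximum floored at $\ell_j$ so that only the ``record'' positions carry nonzero $\pi$-coordinates, reverse that record subsequence into a $j$-mixing-sequence, and match coefficients by telescoping. Your explicit handling of the degenerate case $\max_i w_{i,j}\leq\ell_j$ (where $\vpi=\bm{0}$) is a small extra care the paper leaves implicit, but it does not change the argument.
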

\begin{proof}
By Theorem~\ref{thm:edmonds}, there exists a permutation $\sigma$ of~$[n]$ such that $\pi_{\sigma(t)}=f_j(V_{t})-f_j(V_{t-1})$ where $V_{t}=\{\sigma(1),\ldots,\sigma(t)\}$ for $t\in[n]$ and $V_{0}=\emptyset$. By definition of $f_j$ in~\eqref{y_j-submod}, we have $\ell_j=f_j(V_0)\leq f_j(V_1)\leq \cdots\leq f_j(V_n)$, because $\emptyset=V_0\subset V_1\subset\cdots\subset V_n$. Let $\{t_1,\ldots,t_\tau\}$ be the collection of all indices $t$ satisfying $f_j(V_{t-1})<f_j(V_t)$. Without loss of generality, we may assume that $w_{\sigma(t_1),j}\geq\cdots\geq w_{\sigma(t_\tau), j}$. Notice that $w_{\sigma(t_\tau), j}>\ell_j$, because $f_j(V_{t_\tau})>f_j(V_{t_{\tau}-1})\geq\ell_j$. Then, after setting $j_s=\sigma(t_s)$ for $s\in[\tau]$, it follows that $\{j_1\to\cdots \to j_\tau\}$ is a $j$-mixing-sequence. Moreover, we have $w_{j_1,j}=f_j(V_{t_1})=f_j([n])=\max\left\{w_{i,j}:i\in[n]\right\}$. Therefore, we deduce that $\pi_i=w_{j_s,j}-w_{j_{s+1},j}$ if $i=\sigma(t_s)=j_s$ for some $s\in[\tau]$ and $\pi_i=0$ otherwise. 
\end{proof}
As the name "mixing" inequalities is more commonly used in the literature than "star" inequalities, we will stick to the term "mixing" hereinafter to denote the inequalities of the form~\eqref{mixing-ineqs} or~\eqref{extremal-mixing}.

Proposition~\ref{prop:intersection1} and consequently Corollary~\ref{cor:intersection1} imply that, for any facet defining inequality of the set $\conv(\cM(\vW,\vell,0))$, there is a corresponding extremal polymatroid inequality. Proposition~\ref{prop:coeff1} implies that mixing inequalities are nothing but the extremal polymatroid inequalities. 
Therefore, an immediate consequence of Corollary~\ref{cor:intersection1} and Proposition~\ref{prop:coeff1} is the following result.
\begin{theorem}\label{thm:mixing-lb}
Given $\vW=\{w_{i,j}\}\in\R^{n\times k}_+$ and any $\vell\in\R^k_+$, the convex hull of $\cM(\vW,\vell,0)$ is described by the mixing inequalities of the form~\eqref{extremal-mixing} for $j\in[k]$ and the bounds $\bm{0}\leq \vz\leq\bm{1}$.
\end{theorem}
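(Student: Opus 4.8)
The plan is to read the statement as an immediate synthesis of Corollary~\ref{cor:intersection1} and Proposition~\ref{prop:coeff1}, establishing the two inclusions between $\conv(\cM(\vW,\vell,0))$ and the polyhedron $B$ cut out by all inequalities of the form~\eqref{extremal-mixing} (one family for each $j\in[k]$) together with the box constraints $\bm{0}\le\vz\le\bm{1}$. Since the genuinely geometric content has already been extracted in the earlier results, the proof should amount to bookkeeping of which inequality families describe which polyhedron, being careful to track the complementation $\vz\mapsto\bm{1}-\vz$ from Remark~\ref{rem:M-P-relation}.

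For the inclusion $B\subseteq\conv(\cM(\vW,\vell,0))$, I would start from Corollary~\ref{cor:intersection1}, which identifies $\conv(\cM(\vW,\vell,0))$ with the set of $(\vy,\vz)\in\R^k\times[0,1]^n$ satisfying $(y_j,\bm{1}-\vz)\in\conv(Q_{f_j})$ for every $j\in[k]$. By Theorem~\ref{thm:lovasz}, each such membership constraint is equivalent to imposing the (complemented) extremal polymatroid inequalities of $f_j$. Proposition~\ref{prop:coeff1} then tells me that each of these extremal polymatroid inequalities is exactly one of the mixing inequalities~\eqref{extremal-mixing}. Hence the extremal polymatroid inequalities form a subfamily of~\eqref{extremal-mixing}, so any point satisfying all of~\eqref{extremal-mixing} (plus the bounds) in particular satisfies every extremal polymatroid inequality, and therefore lies in $\conv(\cM(\vW,\vell,0))$.

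For the reverse inclusion $\conv(\cM(\vW,\vell,0))\subseteq B$, I would invoke the validity of the mixing inequalities recalled above: every inequality of the form~\eqref{extremal-mixing} is a special case of~\eqref{mixing-ineqs} (it is the mixing inequality derived from a $j$-mixing-sequence $\{j_1\to\cdots\to j_\tau\}$ with $w_{j_1,j}=\max_{i\in[n]}w_{i,j}$), and such inequalities are valid for $\cM(\vW,\vell,0)$; being valid for the set, they are valid for its convex hull, so every point of $\conv(\cM(\vW,\vell,0))$ satisfies them and the bounds, i.e.\ lies in $B$. Combining the two inclusions yields $B=\conv(\cM(\vW,\vell,0))$, which is the claim.

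The main subtlety---rather than a genuine obstacle---is precisely this last validity step: Proposition~\ref{prop:coeff1} only asserts that every extremal polymatroid inequality \emph{is} some inequality~\eqref{extremal-mixing}, not conversely, so a priori the family~\eqref{extremal-mixing} could contain inequalities beyond those describing the convex hull; one must check that these extra members do not cut into $\conv(\cM(\vW,\vell,0))$, which is exactly what validity guarantees. Alternatively, one could prove the converse of Proposition~\ref{prop:coeff1}---that every~\eqref{extremal-mixing} inequality arises from an extreme point of $EP_{\tilde{f}_j}$, via an explicit permutation placing the sequence indices in increasing order of $w_{\cdot,j}$ and hiding the remaining indices right after the global maximizer---so that the two inequality families literally coincide and validity becomes automatic from Theorem~\ref{thm:lovasz}.
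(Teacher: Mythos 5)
Your proposal is correct and follows essentially the same route as the paper, which derives Theorem~\ref{thm:mixing-lb} as an immediate consequence of Corollary~\ref{cor:intersection1} and Proposition~\ref{prop:coeff1}. Your explicit handling of the one-sided nature of Proposition~\ref{prop:coeff1} (closing the gap via validity of the mixing inequalities~\eqref{mixing-ineqs}, which the paper recalls from \citet{luedtke2014branch-and-cut}) is exactly the bookkeeping the paper leaves implicit.
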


A few remarks are in order. 
\begin{remark}\label{rem:mixingLiterature}
First, note that \citet[Theorem 2]{luedtke2010integer} showed the validity of inequality \eqref{extremal-mixing} and its facet condition for a particular choice of $\vell\in\R^k_+$ in the case of $k=1$.   Also, recall that $\cM(\vW,\bm{0},0)$ is called a joint mixing set, and \citet[Theorem 3]{atamturk2000mixed} proved that $\conv(\cM(\vW,\bm{0},0))$ is described by the mixing inequalities and the bound constraints $\vy\geq \bm{0}$ and $\vz\in[0,1]^n$. Since  Theorem~\ref{thm:mixing-lb} applies to $\cM(\vW,\vell,0)$ for arbitrary $\vell$, it immediately extends \cite[Theorem 3]{atamturk2000mixed} and further extends the validity inequality component of \cite[Theorem 2]{luedtke2010integer}. 
\end{remark}

\begin{remark}\label{rem:mixingSep}
Our final remark is that, since the mixing inequalities~\eqref{extremal-mixing} for $j\in[k]$ are polymatroid inequalities, they can be separated in $O(k\,n\log n)$ time by a simple greedy algorithm, thanks to Corollary~\ref{cor:separation}. This also matches the best known separation complexity of mixing inequalities \cite{gunluk2001mixing}.
\end{remark}

\section{Aggregated mixing inequalities}\label{sec:aggregated}

As discussed in Section~\ref{sec:intro}, in order to make use of the knapsack constraint in the MIP formulation of joint CCPs via quantile cuts, we need to study the set $\cM(\vW,\vell,\varepsilon)$ for general $\varepsilon\geq 0$. Unfortunately, in contrast to our results in Section~\ref{sec:mixing} for the convex hull of $\cM(\vW,\vell,0)$, the convex hull of $\cM(\vW,\vell,\varepsilon)$ for general $\varepsilon\geq 0$ may be complicated; we will soon see this in Example~\ref{example1}. In this section, we introduce a new class of valid inequalities for $\cM(\vW,\vell,\varepsilon)$ for arbitrary $\varepsilon\geq 0$. In Sections~\ref{sec:poly-agg} and~\ref{sec:char}, we identify conditions under which these new inequalities along with the original mixing inequalities are sufficient to give the complete convex hull characterization.

For general $\varepsilon\geq 0$, $\cM(\vW,\vell,\varepsilon)$, given by~\eqref{eq:general-set}, is a subset of $\cM(\vW,\vell,0)$, which means that any inequality valid for $\cM(\vW,\vell,0)$ is also valid for $\cM(\vW,\vell,\varepsilon)$. In particular, Theorem~\ref{thm:mixing-lb} implies that the mixing inequalities of the form~\eqref{mixing-ineqs} are valid for $\cM(\vW,\vell,\varepsilon)$.  However, unlike the $\varepsilon=0$ case, we will see that the mixing inequalities are not sufficient to describe the convex hull of $\cM(\vW,\vell,\varepsilon)$ if $\varepsilon>0$. 

We first present a simplification of $\cM(\vW,\vell,\varepsilon)$. 
Although it is possible that $w_{i,j}<\ell_j$ for some $i,j$ when $\vW,\vell$ are arbitrary, we can reduce $\cM(\vW,\vell,\varepsilon)$ to a set of the form $\cM({\vW}^{\vell},\bm{0},\varepsilon)$ for some ${\vW}^{\vell}=\left\{w^{\vell}_{i,j}\right\}\in\R_+^{n\times k}$.

\begin{lemma}\label{lem:reduction}
Let $\vell\in\R^k_+$. Then 
$\cM(\vW,\vell,\varepsilon)=\left\{(\vy,\vz)\in\R^k\times\R^n:~(\vy-\vell,\vz)\in\cM({\vW}^{\vell},\bm{0},\varepsilon) \right\}$, where ${\vW}^{\vell}=\left\{w_{i,j}^{\vell}\right\}\in\R_+^{n\times k}$ is the matrix whose entries are given by
\begin{equation*}\label{eq:W-nonnegative}
w_{i,j}^{\vell}=(w_{i,j}-\ell_j)_+\quad%
\forall i\in[n],~j\in[k].
\end{equation*}
\end{lemma}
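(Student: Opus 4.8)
The plan is to verify the set identity directly through the affine change of variables $\vy'=\vy-\vell$ (equivalently $y'_j=y_j-\ell_j$ for $j\in[k]$) while keeping $\vz$ fixed. Since this map is a bijection of $\R^k\times\R^n$ onto itself, it suffices to show that $(\vy,\vz)$ satisfies the defining system \eqref{bigM}--\eqref{binary} of $\cM(\vW,\vell,\varepsilon)$ if and only if $(\vy',\vz)$ satisfies the corresponding system defining $\cM(\vW^{\vell},\bm{0},\varepsilon)$, namely $y'_j+w_{i,j}^{\vell}z_i\geq w_{i,j}^{\vell}$, $y'_j\geq 0$, $\sum_{j\in[k]}y'_j\geq\varepsilon$, and $\vz\in\{0,1\}^n$. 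I would prove both implications at once by establishing the equivalence constraint-by-constraint.

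First I would dispatch the easy constraints. The lower-bound constraint \eqref{lb}, $y_j\geq\ell_j$, is literally $y'_j\geq 0$; subtracting $\sum_{j\in[k]}\ell_j$ from both sides of the linking constraint \eqref{linking} turns it into $\sum_{j\in[k]}y'_j\geq\varepsilon$; and the binary constraint \eqref{binary} is untouched. The crux is the big-$M$ constraints \eqref{bigM}, and here I would argue by cases on $z_i\in\{0,1\}$. When $z_i=1$, \eqref{bigM} reads $y_j\geq 0$ and its transformed counterpart reads $y'_j\geq 0$; both are already implied by the lower bounds (using $\ell_j\geq 0$ and $w_{i,j}^{\vell}\geq 0$), so neither imposes anything new. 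When $z_i=0$, \eqref{bigM} reads $y_j\geq w_{i,j}$ while its counterpart reads $y'_j\geq w_{i,j}^{\vell}=(w_{i,j}-\ell_j)_+$. The key identity is $\ell_j+(w_{i,j}-\ell_j)_+=\max\{w_{i,j},\ell_j\}$: combined with the already-established lower bound, the constraint $y_j\geq w_{i,j}$ together with $y_j\geq\ell_j$ is equivalent to $y_j\geq\max\{w_{i,j},\ell_j\}$, which in the $y'$ variable is exactly $y'_j\geq(w_{i,j}-\ell_j)_+$ together with $y'_j\geq 0$. Running this equivalence in both directions completes the verification.

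The one subtlety to flag—the place the argument could go wrong if done carelessly—is that constraint \eqref{bigM} on its own is \emph{not} equivalent to the transformed big-$M$ constraint on its own. The truncation $(\cdot)_+$ in the definition of $w_{i,j}^{\vell}$ is precisely what absorbs the case $w_{i,j}<\ell_j$, and the equivalence only emerges once the big-$M$ constraints are bundled with the lower-bound constraints \eqref{lb}. I would therefore be careful to invoke the lower bound in the $z_i=0$ case (to produce the $\max$ with $\ell_j$) rather than trying to match the big-$M$ inequalities literally. No analysis of recessive directions or extreme points is needed, since this is an elementary set equality rather than a convex-hull statement.
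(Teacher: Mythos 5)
Your proof is correct and follows essentially the same route as the paper's: a direct constraint-by-constraint verification under the shift $\vy'=\vy-\vell$, with the key observation that the big-$M$ inequality is equivalent to its truncated counterpart only when bundled with the lower bound $y_j\geq\ell_j$ (the paper cases on $\ell_j$ versus $w_{i,j}$ where you case on $z_i\in\{0,1\}$, but the content is identical). Your explicit flagging of the subtlety that $(\cdot)_+$ absorbs the $w_{i,j}<\ell_j$ case matches the paper's reasoning exactly.
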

\begin{proof}
By definition, $(\vy-\vell,\vz)\in\cM({\vW}^{\vell},\bm{0},\varepsilon)$ if and only if
	\begin{equation}\label{reduced-con}
	y_j+(w_{i,j}-\ell_j)_+z_i\geq \ell_j+(w_{i,j}-\ell_j)_+,\quad\forall i\in[n],~j\in[k],
	\end{equation}
	and $(\vy,\vz)$ satisfies~\eqref{lb}--\eqref{binary}. Consider any $j\in[k]$. If $\ell_j> w_{i,j}$, then the constraint~\eqref{reduced-con} becomes $y_j\geq \ell_j$ and the inequality $y_j+w_{i,j}z_i\geq w_{i,j}$ is a consequence of $y_j\geq \ell_j$. On the other hand, if $\ell_j\leq w_{i,j}$, then~\eqref{reduced-con} is equivalent to $y_j+(w_{i,j}-\ell_j)z_i\geq w_{i,j}$, and therefore we have $y_j\geq w_{i,j}$ when $z_i=0$ and have $y_j\geq \ell_j$ when $z_i=1$. Then, in both cases, it is clear that
	\[
	\left\{(y_j,z_i)\in\R\times\{0,1\}:~y_j\geq \ell_j,~y_j+(w_{i,j}-\ell_j)_+z_i\geq \ell_j+(w_{i,j}-\ell_j)_+\right\}
	\]
	is equal to
	\[
	\left\{(y_j,z_i)\in\R\times\{0,1\}:~y_j\geq \ell_j,~y_j+w_{i,j}z_i\geq w_{i,j}\right\},
	\]
	because $\ell_j\geq0$. 
	Hence, we have $(\vy-\vell,\vz)\in\cM({\vW}^{\vell},\bm{0},\varepsilon)$ if and only if $(\vy,\vz)\in \cM(\vW,\vell,\varepsilon)$, as required.
\end{proof}

We deduce from Lemma~\ref{lem:reduction} that
\begin{equation*}\label{reduction-hull}
\conv(\cM(\vW,\vell,\varepsilon))=\left\{(\vy,\vz)\in\R^k\times\R^n:~(\vy-\vell,\vz)\in\conv(\cM({\vW}^{\vell},\bm{0},\varepsilon))\right\},
\end{equation*}
and thus the convex hull description of $\cM(\vW,\vell,\varepsilon)$ can be obtained by taking the convex hull of $\cM({\vW}^{\vell},\bm{0},\varepsilon)$. Moreover, any inequality $\valpha^\top \vy+\vbeta^\top \vz\geq \gamma$ is valid for $\cM({\vW}^{\vell},\bm{0},\varepsilon)$ if and only if $\valpha^\top (\vy-\vell)+\vbeta^\top \vz\geq \gamma$ is valid for $\cM(\vW,\vell,\varepsilon)$. 

So, from now on, we assume that $\vell=\bm{0}$, and we work over~$\cM(\vW,\bm{0},\varepsilon)$ with $\vW\in\R^{n\times k}_+$ and $\varepsilon\geq0$. Recall  that $\cM(\vW,\bm{0},\varepsilon)$, which we call a joint mixing set with a linking constraint, is the mixed-integer set defined by 
\begin{subequations}
	\begin{align}
	&y_j+w_{i,j}z_i\geq w_{i,j},~~\quad \quad\forall i\in[n],~j\in[k],\label{bigM'}\\
	&y_j\geq 0,\quad\qquad\qquad\qquad\forall j\in[k],\label{lb'}\\
	&y_1+\cdots+y_k\geq\varepsilon,\label{linking'}\\
	&\vy\in\R^k,~\vz\in\{0,1\}^n.\label{binary'}
	\end{align}
\end{subequations}

Let us begin with an example.
\begin{example}\label{example1}
	Consider the following mixing set with a linking constraint, i.e., $\cM(\vW,\bm{0},\varepsilon)$ with $\varepsilon=7>0$.
	\begin{equation}\label{eq:example1}
	\left\{(\vy,\vz)\in\R_+^2\times\{0,1\}^5:~
	\begin{array}{l}
	y_1+ 8z_1\geq 8,\\
	y_1 + 6z_2\geq 6,\\
	y_1 + 13z_3\geq 13,\\
	y_1 + z_4\geq 1,\\
	y_1 + 4z_5\geq 4,
	\end{array}
	\begin{array}{l}
	y_2 + 3z_1 \geq 3,\\
	y_2 + 4z_2 \geq 4,\\
	y_2 + 2z_3 \geq 2,\\
	y_2 + 2z_4 \geq 2,\\
	y_2 + z_5 \geq 1,
	\end{array}
	~~y_1+y_2\geq 7\right\}.
	\end{equation}
	Using PORTA~\cite{porta}, we derive the convex hull description of this set, which is given by
		\[
		\left\{(\vy,\vz)\in\R_+^2\times[0,1]^5:~
		\begin{array}{l}
		\text{the mixing inequalities~\eqref{mixing-ineqs}},\\
		y_1+ y_2   +z_1+ z_2+ 8z_3 \geq 17,\\
		y_1+ y_2    +2z_2+ 8z_3 \geq 17,\\
		y_1+ y_2    +3z_2+ 7z_3 \geq 17,\\
		y_1+ y_2    +2z_1+3z_2+ 5z_3 \geq 17,\\
		y_1+ y_2+ 4z_1+z_2    + 5z_3 \geq 17
		\end{array}
		\right\}.
		\] In this example, the inequalities $y_1 +2z_1+2z_2+5z_3+z_4+3z_5\geq 13$ and $y_2+2z_2+z_4+z_5\geq 4$ are examples of mixing inequalities from~\eqref{mixing-ineqs} that are facet-defining. Note that the five inequalities with $y_1+y_2$ are not of the form~\eqref{mixing-ineqs}.   Moreover, these non-mixing inequalities  cannot be obtained by simply adding one mixing inequality involving $y_1$ and another mixing inequality involving $y_2$. %
	The developments we present next on a new class of inequalities will demonstrate this point, and we will revisit this example again in Example~\ref{example1revisit}.
\end{example}

The five inequalities with $y_1+y_2$ in Example~\ref{example1} admit a common interpretation. To explain them, take an integer $\theta\in[n]$ and a sequence $\Theta$ of $\theta$ indices in $[n]$ given by $\{i_1\to i_2\to\cdots\to i_\theta\}$. Given two indices in the sequence $i_p,i_q$, we say that {\it $i_p$ precedes $i_q$ in~$\Theta$} if $p< q$. 
Our description is based on the following definition.
\begin{definition}\label{def:mixing-subseq}
Given a sequence $\Theta$, 
a {\it $j$-mixing-subsequence of $\Theta$} is the subsequence $\{j_1\to\cdots\to j_{\tau_j}\}$ of $\Theta$ that satisfies the following property:
\[
\left\{j_1,\ldots,j_{\tau_j}\right\}~\text{is the collection of all indices $i^*\in\Theta$ satisfying}~w_{i^*,j}\geq\max\left\{w_{i,j}:~\text{$i^*$ precedes $i$  in $\Theta$}\right\},
\]
where we define $\max\left\{w_{i,j}:~\text{$i_\theta$ precedes $i$  in $\Theta$}\right\}=0$ for convention ($i_\theta$ is the last element, so it precedes no element in $\Theta$). 
\end{definition}
Based on Definition~\ref{def:mixing-subseq}, we deduce that the $j$-mixing-subsequence of $\Theta$ is unique for each $j\in[k]$ and admits a few nice structural properties as identified below.
\begin{lemma}\label{lem:j-sequence}
If $\{j_1\to \cdots \to j_{\tau_j}\}$ is the $j$-mixing-subsequence of $\Theta$, then $j_{\tau_j}$ is always the last element $i_\theta$ of $\Theta$ and $w_{j_1,j}\geq\cdots\geq w_{j_\tau, j}\geq0$.
\end{lemma}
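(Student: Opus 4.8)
The plan is to unwind Definition~\ref{def:mixing-subseq} directly; the claim follows almost immediately from the defining membership condition, so no substantial machinery is needed. The nonnegativity $w_{j_{\tau_j},j}\ge 0$ is free, since by hypothesis $\vW\in\R_+^{n\times k}$ and hence every entry $w_{i,j}$ is nonnegative.

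To see that $j_{\tau_j}$ is the last element $i_\theta$ of $\Theta$, I would first argue that $i_\theta$ always belongs to the $j$-mixing-subsequence. Indeed, $i_\theta$ precedes no index in $\Theta$, so the convention in Definition~\ref{def:mixing-subseq} sets $\max\{w_{i,j}:~i_\theta \text{ precedes } i \text{ in } \Theta\}=0$; since $w_{i_\theta,j}\ge 0$, the membership condition holds and $i_\theta$ is selected. Because the $j$-mixing-subsequence is a subsequence of $\Theta$ and therefore preserves the ordering of $\Theta$, the inclusion of the last element $i_\theta$ forces it to be the last element of the subsequence, i.e.\ $j_{\tau_j}=i_\theta$.

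For the monotonicity $w_{j_1,j}\ge\cdots\ge w_{j_{\tau_j},j}$, I would consider any two consecutive selected indices $j_s$ and $j_{s+1}$. Since the subsequence respects the order of $\Theta$, $j_s$ precedes $j_{s+1}$ in $\Theta$; hence $w_{j_{s+1},j}$ is one of the terms appearing in $\max\{w_{i,j}:~j_s \text{ precedes } i \text{ in } \Theta\}$. The membership condition for $j_s$ states precisely that $w_{j_s,j}\ge \max\{w_{i,j}:~j_s \text{ precedes } i \text{ in } \Theta\}$, so in particular $w_{j_s,j}\ge w_{j_{s+1},j}$. Chaining this inequality over $s=1,\ldots,\tau_j-1$ yields the desired nonincreasing chain, and combining with nonnegativity gives $w_{j_1,j}\ge\cdots\ge w_{j_{\tau_j},j}\ge 0$.

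The argument has no real obstacle, as it is a straightforward consequence of the definition. The only point requiring care is that the maximum in the membership condition ranges over \emph{all} successors of $j_s$ in $\Theta$---not merely over the later elements of the subsequence---which is exactly what lets us bound $w_{j_{s+1},j}$ by $w_{j_s,j}$ in a single step without any induction on the gap between $j_s$ and $j_{s+1}$.
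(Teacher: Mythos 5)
Your proof is correct and follows essentially the same route as the paper's: membership of the last element $i_\theta$ via the empty-max convention and nonnegativity of $\vW$, and monotonicity by observing that each selected index dominates all of its successors in $\Theta$, hence in particular the next selected one. Your write-up simply makes explicit the one-line chaining argument that the paper leaves implicit.
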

\begin{proof}
When $p<q$, because $j_p$ precedes $j_q$ in $\Theta$, it follows that $w_{j_1,j}\geq\cdots\geq w_{j_{\tau_j},j}\geq0$.  The last element $i_\theta$ always satisfies $w_{i_\theta, j}\geq \max\left\{w_{i,j}:~\text{$i_\theta$ precedes $i$  in $\Theta$}\right\}=0$. Therefore, $i_\theta$ is part of the $j$-mixing-subsequence as its last element.
\end{proof}
Given  $\Theta=\{i_1\to i_2\to\cdots\to i_\theta\}$, for any $j\in[n]$, we denote by $\Theta_j=\{j_1\to\cdots\to j_{\tau_j}\}$ the $j$-mixing-subsequence of $\Theta$. By Definition~\ref{def:j-mixing-seq} and Lemma~\ref{lem:j-sequence}, we deduce that $\{j_1\to\cdots\to j_{\tau_j}\}$ is a $j$-mixing-sequence. Recall that for any $j$-mixing-sequence $\{j_1\to\cdots\to j_{\tau_j}\}$, the corresponding mixing inequality~\eqref{mixing-ineqs} is of the following form:
\begin{equation}\label{mixing}
y_j+\sum_{s\in[\tau_j]} (w_{j_s,j}-w_{j_{s+1},j})z_{j_s}\geq w_{j_1,j}\tag{Mix},
\end{equation}
where $w_{j_{\tau_j+1},j}:=0$, and it is valid for $\cM(\vW,\bm{0},\varepsilon)$. In particular, when $w_{j_1,j}=\max\{w_{i,j}:~i\in[n]\}$,~\eqref{mixing} is
\begin{equation}\label{mixing'}
y_j+\sum_{s\in[\tau_j]} (w_{j_s,j}-w_{j_{s+1},j})z_{j_s}\geq \max\{w_{i,j}:~i\in[n]\}\tag{Mix*}.
\end{equation}
Also, for $t\in[\theta]$,
\begin{equation}\label{coeff-transf}
\left(w_{i_t,j}   -  \max\left\{w_{i,j}:~\text{$i_t$ precedes $i$   in $\Theta$}\right\}   \right)_+=
\begin{cases} 
w_{j_{s},j}-w_{j_{s+1},j} &\text{if $i_t=j_{s}$ for some $s\in[\tau_j]$},\\
0 & \text{if $i_t$ is not on $\Theta_j$}.
\end{cases}
\end{equation}
Then~\eqref{mixing} can be rewritten as
\begin{equation*}\label{mixing-expanded}
y_j+\sum_{t\in[\theta]} \left(w_{i_t,j}   -  \max\left\{w_{i,j}:~\text{$i_t$ precedes $i$   in $\Theta$}\right\}   \right)_+z_{i_t}\geq w_{j_1,j}.
\end{equation*}
In order to introduce our new class of inequalities, we define a constant $L_{\vW,\Theta}$ that depends on $\vW$ and $\Theta$ as follows:
\begin{align}
L_{\vW,\Theta}&:=\min\left\{\sum_{j\in[k]}\left(w_{i_t,j}- \left(w_{i_t,j}   -  \max\left\{w_{i,j}:~\text{$i_t$ precedes $i$   in $\Theta$}\right\}\right)_+\right):~t\in[\theta]\right\}\label{L_WTheta}\\
&=\min\left\{\sum_{j\in[k]}\min\left\{w_{i_t,j},~\max\left\{w_{i,j}:~\text{$i_t$ precedes $i$   in $\Theta$}\right\}\right\}:~t\in[\theta]\right\}\nonumber
\end{align}
Now we are ready to introduce our new class of inequalities. 
\begin{definition}\label{def:aggMixingIneq}
Given a sequence $\Theta=\{i_1\to i_2\to\cdots\to i_\theta\}$, let $L_{\vW,\Theta}$ be defined as in \eqref{L_WTheta}. Then, 
the {\it aggregated mixing inequality derived from $\Theta$} is defined as the following:
\begin{equation}\label{aggregate}
\sum_{j\in[k]}\left(y_j+\sum_{s\in[\tau_j]} (w_{j_s,j}-w_{j_{s+1},j})z_{j_s}\right) - \min\left\{\varepsilon,~L_{\vW,\Theta}\right\} z_{i_\theta}\geq \sum_{j\in[k]} \max\left\{w_{i,j}:~i\in\Theta\right\}\tag{A-Mix}.
\end{equation}
\end{definition}
\begin{remark}\label{rem:aggMixingIneqDomination}
Since $\min\left\{\varepsilon,~L_{\vW,\Theta}\right\}\geq 0$, the aggregated mixing inequality~\eqref{aggregate} dominates what is obtained after adding up the mixing inequalities~\eqref{mixing} for $j\in[k]$. 
\end{remark}

Before proving validity of~\eqref{aggregate}, we present an example illustrating how the aggregated mixing inequalities are obtained.
\begin{example}\label{example1revisit}
	We revisit the mixed-integer set in Example~\ref{example1}. Now take a sequence $\Theta=\{2\to 1\to 3\}$. Then $\{3\}$ and $\{2\to1\to3\}$ are the $1$-mixing-subsequence and $2$-mixing-subsequence of $\Theta$, respectively. Moreover,
	\begin{align*}
	L_{\vW,\Theta}&=\min\left\{  (6-(6-13)_+) + (4- (4-3)_+),~~(8- (8-13)_+)+ (3- (3-2)_+),~~13+2 \right\}  \\
	&=\min\left\{6+3, 8+2 ,13+2 \right\}=9.
	\end{align*}
	In~\eqref{eq:example1}, we have $\varepsilon=7$. Since $\varepsilon\leq L_{\vW,\Theta}$, the corresponding \eqref{aggregate} is 
	\[
	\left(y_1+ 13 z_3\right) +\left(y_2+ (4-3)z_2 + (3-2)z_1+ 2z_3\right) - 7 z_3 \geq 13 + 4,
	\]
	that is $y_1+y_2+z_1+z_2+8z_3\geq 17$. In Example~\ref{example1}, the other four inequalities with $y_1+y_2$ are also of the form~\eqref{aggregate}, and they are derived from the sequences $\left\{2\to 3\right\}$, $\left\{3\to 2\right\}$, $\left\{3\to 1\to 2\right\}$, and $\left\{3\to 2\to 1\right\}$. So, in this example, the convex hull of~\eqref{eq:example1} is obtained after applying the mixing inequalities~\eqref{mixing} and the aggregated mixing inequalities~\eqref{aggregate}.
\end{example}

We will next present the proof of validity of ~\eqref{aggregate}. To this end, the following lemma is useful. As the proof of this lemma is technical, we defer its proof to the appendix. Lemma~\ref{lem:aggregate-lemma} will be used again when proving Theorem~\ref{thm:main}.

\begin{lemma}\label{lem:aggregate-lemma}
Let $(\bm{\bar y},\bm{\bar z})\in\R_+^k\times [0,1]^n$ be a point satisfying~\eqref{bigM'}--\eqref{linking'}. If $(\bm{\bar y},\bm{\bar z})$ satisfies~\eqref{aggregate} for all sequences contained in $\left\{i\in[n]:~\bar z_i<1\right\}$, then $(\bm{\bar y},\bm{\bar z})$ satisfies~\eqref{aggregate} for all the other sequences as well.
\end{lemma}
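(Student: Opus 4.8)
Lemma~\ref{lem:aggregate-lemma} reduces the validity of \eqref{aggregate} from all sequences $\Theta$ to just those sequences contained in the "active" index set $S:=\{i\in[n]:\bar z_i<1\}$, for a fixed point $(\bm{\bar y},\bm{\bar z})$. The natural strategy is to show that if $\Theta$ is an arbitrary sequence, one can delete from $\Theta$ every index $i$ with $\bar z_i=1$ and obtain a (shorter) sequence $\Theta'\subseteq S$ whose associated inequality~\eqref{aggregate}, when satisfied by $(\bm{\bar y},\bm{\bar z})$, implies~\eqref{aggregate} for $\Theta$. So the core of the proof is a \emph{deletion/reduction argument}: understand how the three ingredients of~\eqref{aggregate}---the mixing-inequality left-hand sides $\sum_{s}(w_{j_s,j}-w_{j_{s+1},j})z_{j_s}$, the correction term $\min\{\varepsilon,L_{\vW,\Theta}\}z_{i_\theta}$, and the right-hand side $\sum_j\max\{w_{i,j}:i\in\Theta\}$---change when a single index $i^*$ with $\bar z_{i^*}=1$ is removed from $\Theta$.

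\textbf{Key steps.} First I would reduce to deleting a single index, then iterate: it suffices to show that for any $\Theta$ containing some $i^*$ with $\bar z_{i^*}=1$, satisfaction of~\eqref{aggregate} for the sequence $\Theta\setminus\{i^*\}$ (keeping the relative order) implies satisfaction for $\Theta$. The main case split is on the position of $i^*$ in $\Theta$. The delicate case is when $i^*$ is the \emph{last} element $i_\theta$, because $i_\theta$ always lies on every $j$-mixing-subsequence (by Lemma~\ref{lem:j-sequence}) and carries the correction coefficient $\min\{\varepsilon,L_{\vW,\Theta}\}$; deleting it changes the last coefficient $w_{j_{\tau_j},j}-w_{j_{\tau_j+1},j}=w_{j_{\tau_j},j}$ of each mixing-subsequence and shifts the correction term onto a new last element. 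When $i^*$ is not last, I would show that the contribution of $z_{i^*}$ to the left-hand side evaluated at $\bar z_{i^*}=1$ exactly telescopes against the change in the $\max\{w_{i,j}:i\in\Theta\}$ terms, using the coefficient identity~\eqref{coeff-transf} which writes each mixing coefficient as $\big(w_{i_t,j}-\max\{w_{i,j}:i_t\text{ precedes }i\}\big)_+$.

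\textbf{The main obstacle.} The hard part will be controlling the correction term $\min\{\varepsilon,L_{\vW,\Theta}\}z_{i_\theta}$ under deletion. The quantity $L_{\vW,\Theta}$ is defined as a minimum over all positions $t\in[\theta]$ of $\sum_j\min\{w_{i_t,j},\max\{w_{i,j}:i_t\text{ precedes }i\}\}$, so deleting an index changes both which positions are being minimized over and the "precedes" maxima inside. I expect that the correct bookkeeping is: when $\bar z_{i^*}=1$, the term $-\min\{\varepsilon,L_{\vW,\Theta}\}\bar z_{i_\theta}$ together with the mixing left-hand sides evaluated at the saturated coordinate $\bar z_{i^*}=1$ must be shown to be no smaller than the corresponding quantity for $\Theta'$, so that a valid inequality for $\Theta'$ remains valid for $\Theta$. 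The cleanest route is probably to verify the claim directly on the integer feasible points and invoke the fact that~\eqref{aggregate} was (or will be) shown valid there, rather than manipulating the fractional inequality coefficient-by-coefficient; since the statement fixes a single point $(\bm{\bar y},\bm{\bar z})$ satisfying the constraints, one can also argue by exhibiting, for each $\Theta$, an appropriate $\Theta'\subseteq S$ and showing the left-hand-side gap is monotone. I would therefore structure the argument around a careful, position-by-position comparison, isolating the last-element case as the only genuinely subtle one and handling it via the definition of $L_{\vW,\Theta}$ and the monotonicity $w_{j_1,j}\geq\cdots\geq w_{j_{\tau_j},j}\geq0$ from Lemma~\ref{lem:j-sequence}.
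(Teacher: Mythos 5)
Your plan matches the paper's proof essentially exactly: the paper argues by induction on the number of saturated indices $|\{i\in\Theta:\bar z_i=1\}|$, deleting one such index per step and splitting into the cases where the deleted index is or is not the last element of $\Theta$, using precisely the monotonicity of the coefficients $\left(w_{i_t,j}-\max\{w_{i,j}:\text{$i_t$ precedes $i$}\}\right)_+$ and of $L_{\vW,\Theta}$ under passing to subsequences, with the last-element case resolved by one further comparison of the coefficient of $\bar z_{i_{\theta-1}}$ in the two inequalities. The only thing to discard is your aside about verifying the claim on the integer feasible points: that would be circular, since the lemma concerns an arbitrary point of the linear relaxation and is itself the tool Theorem~\ref{thm:pm-aggregate} uses to establish validity at the integer points.
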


Now we are ready to prove the following theorem:

\begin{theorem}\label{thm:pm-aggregate}
The aggregated mixing inequalities defined as in~\eqref{aggregate} are valid for $\cM(\vW,\bm{0},\varepsilon)$ where $\vW\in\R^{n\times k}_+$.
\end{theorem}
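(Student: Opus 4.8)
The plan is to prove validity over the mixed-integer set directly, so it suffices to verify that \eqref{aggregate} holds at every point $(\bm{\bar y},\bm{\bar z})\in\cM(\vW,\bm{0},\varepsilon)$; by definition such a point has $\bm{\bar z}\in\{0,1\}^n$, lies in $\R_+^k\times[0,1]^n$, and satisfies \eqref{bigM'}--\eqref{linking'}. The key simplification is to invoke Lemma~\ref{lem:aggregate-lemma}. Since $\bm{\bar z}$ is integral, we have $\{i\in[n]:\bar z_i<1\}=\{i\in[n]:\bar z_i=0\}$, so the lemma reduces the task to checking \eqref{aggregate} only for those sequences $\Theta$ whose indices all satisfy $\bar z_i=0$. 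In other words, the lemma lets us bypass entirely the delicate case in which the last index $i_\theta$ of $\Theta$ carries $\bar z_{i_\theta}=1$.

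First I would fix such a sequence $\Theta=\{i_1\to\cdots\to i_\theta\}$ with $\bar z_{i_t}=0$ for all $t\in[\theta]$. For each $j\in[k]$, the $j$-mixing-subsequence $\Theta_j=\{j_1\to\cdots\to j_{\tau_j}\}$ is contained in $\Theta$, so every $\bar z_{j_s}=0$; likewise $\bar z_{i_\theta}=0$. Consequently every $z$-term on the left-hand side of \eqref{aggregate}, including the correction $-\min\{\varepsilon,L_{\vW,\Theta}\}\,z_{i_\theta}$, vanishes, and the inequality collapses to the requirement
\[
\sum_{j\in[k]}\bar y_j\;\geq\;\sum_{j\in[k]}\max\{w_{i,j}:i\in\Theta\}.
\]

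To finish, I would read off from the big-$M$ constraints \eqref{bigM'} that, for each $i\in\Theta$ and each $j\in[k]$, setting $z_i=0$ gives $\bar y_j\geq w_{i,j}$; maximizing over $i\in\Theta$ yields $\bar y_j\geq\max\{w_{i,j}:i\in\Theta\}$, and summing over $j$ produces exactly the displayed inequality. (Equivalently, one may sum the $k$ individual mixing inequalities \eqref{mixing'} derived from the $\Theta_j$, whose right-hand sides $w_{j_1,j}$ equal $\max\{w_{i,j}:i\in\Theta\}$ by Lemma~\ref{lem:j-sequence}.) This completes the reduced verification, and the lemma then delivers validity for every sequence.

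The main obstacle is not in this argument but is precisely what Lemma~\ref{lem:aggregate-lemma} absorbs: when $\bar z_{i_\theta}=1$, the subtracted term $\min\{\varepsilon,L_{\vW,\Theta}\}$ becomes active, and one must show that the slack available in the linking constraint \eqref{linking'} together with the definition of $L_{\vW,\Theta}$ in \eqref{L_WTheta} compensates for it. Since that delicate case is quarantined inside the (deferred) lemma, the theorem itself reduces to the straightforward all-zero computation above.
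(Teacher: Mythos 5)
Your proof is correct and follows essentially the same route as the paper's: invoke Lemma~\ref{lem:aggregate-lemma} to reduce to sequences contained in $\{i:\bar z_i=0\}$, where the inequality collapses to the sum of the mixing inequalities (equivalently, to $\bar y_j\geq\max\{w_{i,j}:i\in\Theta\}$ read off from~\eqref{bigM'} with $z_i=0$). The only difference is that the paper first disposes of the case $\bm{\bar z}=\bm{1}$ directly via the linking constraint $\sum_{j\in[k]}\bar y_j\geq\varepsilon$ rather than relying on a vacuous instance of the lemma; this is slightly more careful but does not constitute a gap in your argument.
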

\begin{proof}
We will argue that every point in $\cM(\vW,\bm{0},\varepsilon)$ with $\vW\in\R^{n\times k}_+$ satisfies~\eqref{aggregate} for all sequences in $[n]$. To this end, take a point $(\bm{\bar y},\bm{\bar z})\in\cM(\vW,\bm{0},\varepsilon)$. Then, $\bar \vz\in\{0,1\}^n$ holds by definition of $\cM(\vW,\bm{0},\varepsilon)$. If $\bm{\bar z}=\bm{1}$, then $(\bm{\bar y},\bm{\bar z})$ satisfies~\eqref{aggregate} if and only if $\sum_{j\in[k]}\bar y_j\geq \min\left\{\varepsilon,~L_{\vW,\Theta}\right\}$. Since $\sum_{j\in[k]}\bar y_j\geq \varepsilon$, it follows that $(\bm{\bar y},\bm{\bar z})$ satisfies~\eqref{aggregate}. Thus, we may assume that $\left\{i\in[n]:~\bar z_i<1\right\}=\left\{i\in[n]:~\bar z_i=0\right\}$ is nonempty. By Lemma~\ref{lem:aggregate-lemma}, it is sufficient to show that $(\bm{\bar y},\bm{\bar z})$ satisfies~\eqref{aggregate} for every sequence contained in the nonempty set $\left\{i\in[n]:~\bar z_i<1\right\}$. Take a nonempty sequence $\Theta=\left\{i_1\to\cdots\to i_\theta\right\}$ in $\left\{i\in[n]:~\bar z_i=0\right\}$. By our choice of $\Theta$, we have $\bar z_{i_\theta}=0$, so $(\bm{\bar y},\bm{\bar z})$ satisfies~\eqref{aggregate} if and only if
\[
\sum_{j\in[k]}\left(\bar y_j+\sum_{s\in[\tau_j]} (w_{j_s,j}-w_{j_{s+1},j})\bar z_{j_s}\right)\geq \sum_{j\in[k]} w_{j_1,j}.
\]
This inequality is precisely what is obtained by adding up the mixing inequalities~\eqref{mixing} for $j\in[k]$, and therefore, $(\bm{\bar y},\bm{\bar z})$ satisfies~it, as required.
\end{proof}

In Example~\ref{example1revisit}, $\varepsilon=7$ and $L_{\vW,\{2\to1\to3\}}=9$. It can also be readily checked that $L_{\vW,\{2\to3\}}=L_{\vW,\{3\to2\}}=8$ and $L_{\vW,\{3\to1\to2\}}=L_{\vW,\{3\to2\to1\}}=9$, which means $\min\left\{\varepsilon,L_{\vW,\Theta}\right\}=\varepsilon$ for the sequences corresponding to the five aggregated mixing inequalities in the convex hull description of~\eqref{eq:example1}. In general, the following holds:

\begin{proposition}
	If $\varepsilon\leq L_{\vW,\Theta}$, then the aggregated mixing inequality~\eqref{aggregate} obtained from $\Theta$ dominates the linking constraint $y_1+\cdots+y_k\geq \varepsilon$.
\end{proposition}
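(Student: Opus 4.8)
The plan is to show that the aggregated mixing inequality \eqref{aggregate} is at least as strong as the linking constraint \eqref{linking'}, i.e.\ that under the hypothesis $\varepsilon\leq L_{\vW,\Theta}$ the coefficient $\min\{\varepsilon,L_{\vW,\Theta}\}$ simplifies to $\varepsilon$, and then to verify algebraically that \eqref{aggregate} implies $y_1+\cdots+y_k\geq\varepsilon$ over the nonnegative orthant for $\vz\in[0,1]^n$. First I would record the immediate observation that when $\varepsilon\leq L_{\vW,\Theta}$ we have $\min\{\varepsilon,L_{\vW,\Theta}\}=\varepsilon$, so that \eqref{aggregate} reads
\[
\sum_{j\in[k]}\left(y_j+\sum_{s\in[\tau_j]}(w_{j_s,j}-w_{j_{s+1},j})z_{j_s}\right)-\varepsilon\, z_{i_\theta}\geq \sum_{j\in[k]}\max\{w_{i,j}:i\in\Theta\}.
\]

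Next I would rearrange this into a lower bound on $\sum_{j\in[k]}y_j$. Isolating the continuous terms gives
\[
\sum_{j\in[k]}y_j\geq \sum_{j\in[k]}\max\{w_{i,j}:i\in\Theta\}-\sum_{j\in[k]}\sum_{s\in[\tau_j]}(w_{j_s,j}-w_{j_{s+1},j})z_{j_s}+\varepsilon\, z_{i_\theta}.
\]
The goal is to show the right-hand side is at least $\varepsilon$, and the natural route is to bound each coefficient block. The key facts I would invoke are the telescoping identity built into the mixing coefficients and the structure of $j$-mixing-subsequences from Lemma~\ref{lem:j-sequence}: for each $j$, since $w_{j_1,j}=\max\{w_{i,j}:i\in\Theta\}$ (the first element of the subsequence attains the maximum) and $w_{j_{\tau_j+1},j}=0$, the coefficients $(w_{j_s,j}-w_{j_{s+1},j})$ are nonnegative and sum to $\max\{w_{i,j}:i\in\Theta\}$. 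Using $z_{j_s}\le 1$ we get that the subtracted double sum is at most $\sum_{j\in[k]}\max\{w_{i,j}:i\in\Theta\}$, which cancels the first block; combined with $z_{i_\theta}\le 1$ and $\varepsilon\ge 0$, a crude bound would yield $\sum_j y_j\geq \varepsilon\,z_{i_\theta}$, which is \emph{not} quite enough because $z_{i_\theta}$ can be strictly less than $1$.

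The hard part, and where the argument needs care, is therefore handling the case $z_{i_\theta}<1$ cleanly rather than relying on the crude bound above. The right way is to use the mixing inequality \eqref{mixing'} for the index $i_\theta$ itself (recall $i_\theta=j_{\tau_j}$ is the last element of every $j$-mixing-subsequence by Lemma~\ref{lem:j-sequence}), together with the already-established validity of \eqref{aggregate} from Theorem~\ref{thm:pm-aggregate}. I would argue by cases on $z_{i_\theta}$: when $z_{i_\theta}=1$, \eqref{aggregate} with $\min\{\varepsilon,L_{\vW,\Theta}\}=\varepsilon$ directly gives $\sum_j y_j+(\text{nonneg terms})\geq \sum_j\max+\varepsilon\cdot 0$ after the telescoping cancellation, yielding $\sum_j y_j\ge\varepsilon$; when $z_{i_\theta}=0$, the $-\varepsilon z_{i_\theta}$ term vanishes and one recovers exactly the sum of the mixing inequalities \eqref{mixing}, whose validity does not itself force $\sum_j y_j\ge\varepsilon$, so here I would instead note that the claim ``$\eqref{aggregate}$ dominates $\eqref{linking'}$'' is a statement about the \emph{inequalities as valid cuts}, meaning every point cut off by \eqref{linking'} is also cut off by \eqref{aggregate}. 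Consequently the cleanest formulation is to show that any $(\vy,\vz)\in\R_+^k\times[0,1]^n$ satisfying \eqref{aggregate} (but with the $\vz$ relaxed) also satisfies \eqref{linking'}; I expect the decisive inequality to be that $\sum_j y_j+\varepsilon z_{i_\theta}-\varepsilon\geq \sum_j\big(\text{mixing LHS}_j-\max_j\big)+\varepsilon(z_{i_\theta}-1)\geq 0$, where the first summand is nonnegative by validity of \eqref{mixing} and the second is nonnegative precisely when $z_{i_\theta}=1$ — so the genuinely load-bearing observation is that at any point where the linking constraint is tight or violated, the relevant $z_{i_\theta}$ pushes \eqref{aggregate} to bite harder, and this monotonicity in $z_{i_\theta}$ is what I would isolate and verify as the crux of the proof.
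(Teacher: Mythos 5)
Your setup is right---under the hypothesis $\min\{\varepsilon,L_{\vW,\Theta}\}=\varepsilon$, and rearranging \eqref{aggregate} into a lower bound on $\sum_{j\in[k]}y_j$ is exactly how the paper begins---and you correctly diagnose that the crude bound obtained by dropping every $z$-term via $z_{j_s}\le 1$ only yields $\sum_j y_j\ge\varepsilon z_{i_\theta}$, which is insufficient. But the argument never closes the gap you identify. The missing idea is this: write
\[
w_{j_1,j}-\sum_{s\in[\tau_j]}(w_{j_s,j}-w_{j_{s+1},j})z_{j_s}=\sum_{s\in[\tau_j]}(w_{j_s,j}-w_{j_{s+1},j})(1-z_{j_s}),
\]
note that every summand is nonnegative for $\vz\in[0,1]^n$, and keep only the \emph{last} summand, which equals $w_{i_\theta,j}(1-z_{i_\theta})$ because $j_{\tau_j}=i_\theta$ for every $j$ (Lemma~\ref{lem:j-sequence}) and $w_{j_{\tau_j+1},j}=0$. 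This turns the lower bound into $\sum_j y_j\ge \sum_j w_{i_\theta,j}+\bigl(\varepsilon-\sum_j w_{i_\theta,j}\bigr)z_{i_\theta}$, a linear function of $z_{i_\theta}\in[0,1]$ whose value at the two endpoints is $\sum_j w_{i_\theta,j}$ and $\varepsilon$; since $\max\{w_{i,j}:i_\theta\text{ precedes }i\text{ in }\Theta\}=0$ gives $\sum_j w_{i_\theta,j}\ge L_{\vW,\Theta}\ge\varepsilon$ by \eqref{L_WTheta} and the hypothesis, both endpoint values are at least $\varepsilon$ and you are done. Your proposal instead retreats to a case analysis on $z_{i_\theta}\in\{0,1\}$, which is incomplete for a domination claim over $[0,1]^n$, and even at $z_{i_\theta}=0$ you conclude (incorrectly) that the sum of the mixing inequalities does not force $\sum_j y_j\ge\varepsilon$---it does, by exactly the single-term bound above together with $\sum_j w_{i_\theta,j}\ge L_{\vW,\Theta}$.

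A second, structural problem: your final ``decisive inequality'' invokes the \emph{validity} of \eqref{mixing} for the first summand. Domination is a statement about every point of $\R^k\times[0,1]^n$ that satisfies \eqref{aggregate}, not only about points of $\cM(\vW,\bm{0},\varepsilon)$, so you cannot assume other valid inequalities hold at the point in question; the implication must follow from \eqref{aggregate} and $\bm{0}\le\vz\le\bm{1}$ alone. And as you yourself note, the term $\varepsilon(z_{i_\theta}-1)$ in that chain is nonnegative only when $z_{i_\theta}=1$, so the claimed chain of inequalities fails for $z_{i_\theta}<1$. The ``monotonicity in $z_{i_\theta}$'' you gesture at as the crux is real, but it is delivered precisely by retaining the $w_{i_\theta,j}(1-z_{i_\theta})$ term, which your write-up never does.
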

\begin{proof}
	The inequality~\eqref{aggregate} is equivalent to
	\[
	\sum_{j\in[k]}y_j\geq \varepsilon z_{i_\theta}+\sum_{j\in[k]} \left(w_{j_1,j} - \sum_{s\in[\tau_j]} (w_{j_s,j}-w_{j_{s+1},j})z_{j_s}\right).
	\]
	Since $\sum_{s\in[\tau_j]} (w_{j_s,j}-w_{j_{s+1},j})=w_{j_1,j}$, we deduce by by Lemma~\ref{lem:j-sequence} that for all $j\in[k]$,
\begin{align*}
w_{j_1,j} - \sum_{s\in[\tau_j]} (w_{j_s,j}-w_{j_{s+1},j})z_{j_s}
& = \sum_{s\in[\tau_j]} (w_{j_s,j}-w_{j_{s+1},j})(1-z_{j_s}) \\
&\geq (w_{j_{\tau_j},j}-w_{j_{{\tau_j}+1},j})(1-z_{j_{\tau_j}}) =w_{i_\theta, j}(1-z_{i_\theta}), 
\end{align*}
where the inequality follows from the facts that $w_{j_s,j}-w_{j_{s+1},j}\geq 0$ for all $j_s\in[\tau_j]$ and thus each summand is nonnegative, and the last equation follows from $j_{\tau_j}=i_{\theta}$ and by our convention that $w_{j_{\tau_j+1},j}=0$. 
Therefore, the following inequality is a consequence of~\eqref{aggregate}:
	\[
	\sum_{j\in[k]}y_j\geq \sum_{j\in[k]}w_{i_\theta, j}+\left(\varepsilon-\sum_{j\in[k]}w_{i_\theta, j}\right) z_{i_\theta}.
	\]
	Since $0\leq z_{i_\theta}\leq 1$, its right-hand side is always greater than or equal to $\min\left\{\sum_{j\in[k]}w_{i_\theta, j},~\varepsilon \right\}$. Since $\max\left\{w_{i,j}:~\text{$i_\theta$ precedes $i$   in $\Theta$}\right\}=0$, it follows from the definition of $L_{\vW,\Theta}$ in~\eqref{L_WTheta} that $\sum_{j\in[k]}w_{i_\theta, j}\geq L_{\vW,\theta}$. Then, by our assumption that $L_{\vW,\Theta}\geq\varepsilon$, we have $\min\left\{\sum_{j\in[k]}w_{i_\theta, j},~\varepsilon \right\}=\varepsilon$, %
	implying in turn that $y_1+\cdots+y_k\geq \varepsilon$ is implied by~\eqref{aggregate}, as required.
\end{proof}

We next demonstrate that when $\varepsilon$ is large, applying the aggregated mixing inequalities is not always enough to describe the convex hull of $\cM(\vW,\bm{0},\varepsilon)$ via an example.

\begin{example}\label{example2}
	The following set is the same as~\eqref{eq:example1} in Examples~\ref{example1}~and~\ref{example1revisit} except that $\varepsilon=9$. 
	\begin{equation}\label{eq:example2}
	\left\{(\vy,\vz)\in\R_+^2\times\{0,1\}^5:~
	\begin{array}{l}
	y_1+ 8z_1\geq 8,\\
	y_1 + 6z_2\geq 6,\\
	y_1 + 13z_3\geq 13,\\
	y_1 + z_4\geq 1,\\
	y_1 + 4z_5\geq 4,
	\end{array}
	\begin{array}{l}
	y_2 + 3z_1 \geq 3,\\
	y_2 + 4z_2 \geq 4,\\
	y_2 + 2z_3 \geq 2,\\
	y_2 + 2z_4 \geq 2,\\
	y_2 + z_5 \geq 1,
	\end{array}
	~~\bm{y_1+y_2\geq 9}\right\}.
	\end{equation}
	Recall that $L_{\vW,\{2\to 3\}}=8$, so $\varepsilon>L_{\vW,\{2\to3\}}$ in this case. As before, we obtain the convex hull description of~\eqref{eq:example2} via PORTA~\cite{porta}:
	\[
	\left\{(\vy,\vz)\in\R_+^2\times[0,1]^5:~
	\begin{array}{l}	
	\text{the mixing inequalities~\eqref{mixing}}\\
	7y_1+ 6y_2   + 12z_2+ 49z_3 \geq 115,\\
	6y_1+ 5y_2   + 10z_2+ 42z_3+z_4 \geq 98,\\
	3y_1+ 2y_2   + 4z_2+ 21z_3+z_4+3z_5 \geq 47,\\
	3y_1+ 2y_2   + 4z_2+ 21z_3+4z_5 \geq 47,\\
	2y_1+ 3y_2    +6z_2+ 14z_3 \geq 38,\\
	~~y_1+ 2y_2   + 4z_2+ 7z_3+ z_5 \geq 21,\\
	~~y_1+~~y_2    +z_1+z_2+ 6z_3 \geq 17,\\
	~~y_1+~~y_2+ 2z_1+z_2    + 5z_3 \geq 17\\
	\end{array}
	\right\}.
	\]
In this convex hull description, there are still two inequalities with $y_1+y_2$, and it turns out that these are aggregated mixing inequalities. To illustrate, take a sequence $\Theta=\{2\to 1\to 3\}$. We observed in Example~\ref{example1revisit} that $\{3\}$ and $\{2\to 1\to 3\}$ are the 1-mixing subsequence and the 2-mixing subsequence of $\Theta$ and that $L_{\vW,\Theta}=9$. So, the corresponding aggregated mixing inequality~\eqref{aggregate} is $y_1+y_2+z_1+z_2+6z_3\geq 17$. Similarly, we obtain $y_1+y_2+2z_1+z_2+5z_3\geq 17$ from $\{3\to 2\to 1\}$. However, unlike the system~\eqref{eq:example1} in Example~\ref{example1}, there are facet-defining inequalities for the convex hull of this set other than the aggregated mixing inequalities, i.e., the first 6 inequalities in the above description of the convex hull have different coefficient structures on the $y$ variables.
\end{example}

So, a natural question is: When are the mixing inequalities and the aggregated mixing inequalities  sufficient to describe the convex hull of $\cM(\vW,\bm{0},\varepsilon)$? Examples~\ref{example1}--\ref{example2} suggest that whether or not the mixing and the aggregated mixing inequalities are sufficient depends on the value of $\varepsilon$. In the next section, we find a necessary and sufficient condition for the sufficiency of the mixing and the aggregated mixing inequalities.

\section{Joint mixing sets with a linking constraint}\label{sec:mixing-lb}

In this section, we study the convex hull of $\cM(\vW,\bm{0},\varepsilon)$, %
where $\vW=\left\{w_{i,j}\right\}\in\R_+^{n\times k}$ and $\varepsilon\in\R_+$. More specifically, we focus on the question of when the convex hull of this set is obtained after applying the mixing inequalities and the aggregated mixing inequalities. 
By Remark~\ref{rem:M-P-relation}, we have $(\vy,\vz)\in\cM(\vW,\bm{0},\varepsilon)$ if and only if $(\vy,\bm{1}-\vz)\in\cP(\vW,\bm{0},\varepsilon)$. 
In Section~\ref{sec:mixing}, we identified that $\cP(\vW,\vell,0)$ defined as in~\eqref{twisted-mixingset} has an underlying submodularity structure (due to Lemma~\ref{lem:submodular1} and Proposition~\ref{prop:intersection1}). In this section, we will first establish %
that $\cP(\vW,\bm{0},\varepsilon)$ has a similar submodularity structure for particular values of $\varepsilon$. In fact, for those favorable values of $\varepsilon$, we show that the mixing and the aggregated mixing inequalities are sufficient to describe the convex hull of $\cM(\vW,\bm{0},\varepsilon)$ if and only if $\cP(\vW,\bm{0},\varepsilon)$ has the desired submodularity structure; this is the main result of this section.

\subsection{Submodularity in joint mixing sets with a linking constraint}\label{sec:submod}

In order to make a connection with submodularity, we first define the following functions $f_1,\ldots,f_k,g:\{0,1\}^n\to\R$: for $\vz\in\{0,1\}^n$,
\begin{equation}\label{submod}
f_j(\vz):=\max\limits_{i\in[n]}\left\{w_{i,j}z_i\right\}\quad \text{for}~j\in[k]\quad\text{and}\quad g(\vz):=
\max\left\{\varepsilon, \sum_{j\in[k]}f_j(\vz)\right\} .
\end{equation}
Then, we immediately arrive at the following representation of $\cP(\vW,\bm{0},\varepsilon)$. 
\begin{lemma}\label{lem:P-epigraph}
Let $f_1,\ldots,f_k,g:\{0,1\}^n\to\R$ be as defined in \eqref{submod}. Then, 
\begin{equation}\label{P-submod}
\cP(\vW,\bm{0},\varepsilon)=\left\{(\vy,\vz)\in\R^k\times\{0,1\}^n:~y_j\geq f_j(\vz),~\forall j\in[k],~~ y_1+\cdots+y_k\geq g(\vz)\right\}.
\end{equation} 
\end{lemma}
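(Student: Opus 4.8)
The plan is to prove the set equality in~\eqref{P-submod} directly by unwinding the definitions in~\eqref{submod}, since Lemma~\ref{lem:P-epigraph} is a purely definitional restatement of $\cP(\vW,\bm{0},\varepsilon)$ rather than a claim that requires submodularity. Recall that $\cP(\vW,\bm{0},\varepsilon)$ is the set defined by~\eqref{bigM''}--\eqref{linking''} with $\vell=\bm{0}$, i.e., by $y_j\geq w_{i,j}z_i$ for all $i\in[n],j\in[k]$, together with $y_j\geq 0$ for all $j\in[k]$ and $\sum_{j\in[k]}y_j\geq\varepsilon$ (here $\sum_{j}\ell_j=0$). The right-hand side of~\eqref{P-submod} is defined by the epigraph constraints $y_j\geq f_j(\vz)$ and $\sum_{j}y_j\geq g(\vz)$. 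So the whole argument amounts to showing that these two systems of constraints cut out the same subset of $\R^k\times\{0,1\}^n$.

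First I would handle the individual constraints $y_j\geq f_j(\vz)$. Fixing $j\in[k]$ and $\vz\in\{0,1\}^n$, I claim $y_j\geq f_j(\vz)$ holds if and only if both $y_j\geq w_{i,j}z_i$ for all $i\in[n]$ and $y_j\geq 0$ hold. Indeed, by definition $f_j(\vz)=\max_{i\in[n]}\{w_{i,j}z_i\}$, and since every $w_{i,j}z_i\geq 0$ (as $\vW\in\R_+^{n\times k}$ and $z_i\in\{0,1\}$) we have $f_j(\vz)\geq 0$; thus $y_j\geq f_j(\vz)$ is equivalent to simultaneously dominating every term $w_{i,j}z_i$ and the value $0$. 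By the convention stated in Section~\ref{sec:notation} that $\max$ over an empty index set equals $0$, the bound $y_j\geq 0$ is subsumed whenever $n\geq 1$, but including it makes the equivalence exact regardless. This recovers precisely constraints~\eqref{bigM''} and~\eqref{lb''}.

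Next I would treat the linking constraint. By definition $g(\vz)=\max\{\varepsilon,\sum_{j\in[k]}f_j(\vz)\}$, so $\sum_{j}y_j\geq g(\vz)$ is equivalent to the pair of inequalities $\sum_{j}y_j\geq\varepsilon$ and $\sum_{j}y_j\geq\sum_{j}f_j(\vz)$. The first of these is exactly the linking constraint~\eqref{linking''}. The second, however, is automatically implied by the already-established constraints $y_j\geq f_j(\vz)$ for each $j\in[k]$: summing these over $j$ gives $\sum_{j}y_j\geq\sum_{j}f_j(\vz)$. Hence adding the constraint $\sum_{j}y_j\geq g(\vz)$ to the system $\{y_j\geq f_j(\vz)\}_{j}$ imposes exactly the additional requirement $\sum_{j}y_j\geq\varepsilon$, which is~\eqref{linking''}. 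Putting the two parts together shows that the right-hand side of~\eqref{P-submod} is cut out by exactly~\eqref{bigM''}--\eqref{linking''}, i.e., it equals $\cP(\vW,\bm{0},\varepsilon)$.

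The argument is genuinely routine; there is no real obstacle, and the only point requiring a moment of care is making sure the $\max\{\varepsilon,\cdot\}$ in $g$ does not introduce a stronger constraint than~\eqref{linking''}. The key observation resolving this is that the $\sum_{j}f_j(\vz)$ branch of the maximum is redundant given the individual epigraph constraints, so $g$ contributes only the $\varepsilon$ lower bound on the sum. I would therefore present the proof compactly as the two equivalences above, noting explicitly that $f_j(\vz)\geq 0$ and that $\sum_{j}y_j\geq\sum_{j}f_j(\vz)$ follows by summation, so no additional hypotheses (in particular, no submodularity) are needed for this lemma.
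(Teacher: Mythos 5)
Your proof is correct and follows essentially the same route as the paper's: both arguments simply unwind the definitions, noting that $y_j\geq f_j(\vz)$ is equivalent to constraints~\eqref{bigM''} and~\eqref{lb''} because $f_j(\vz)=\max_i\{w_{i,j}z_i\}\geq 0$, and that $\sum_j y_j\geq g(\vz)$ reduces to the linking constraint~\eqref{linking''} since the $\sum_j f_j(\vz)$ branch of the maximum is already implied by summing the individual epigraph constraints. Your write-up is somewhat more explicit than the paper's two-line proof, but there is no substantive difference.
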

\begin{proof}
We deduce the equivalence of the relations $y_j\geq f_j(\vz)$ for $j\in[k]$ to the first set of constraints in $\cP(\vW,\bm{0},\varepsilon)$ from the corresponding definition of this set in~\eqref{twisted-mixingset}. Also, we immediately have $\sum_{j\in[k]}y_j\geq \max\left\{ \varepsilon,\, \sum_{j\in[k]}f_j(\vz) \right\}$. The result then follows from the definition of the function $g$.
\end{proof}

We would like to understand the convex hull of $\cP(\vW,\bm{0},\varepsilon)$ for $\vW\in\R^{n\times k}_+$ and $\varepsilon\in\R_+$ using  Lemma~\ref{lem:P-epigraph}. Observe that $f_1,\ldots,f_k$ defined in~\eqref{submod} coincide with the functions $f_1,\ldots,f_k$ defined in~\eqref{y_j-submod} for the $\vell=\bm{0}$ case. So, the following is a direct corollary of Lemma~\ref{lem:submodular1}.
\begin{corollary}\label{cor:f-submodular}
For any $j\in[k]$, the function $f_j$ defined as in~\eqref{submod} is submodular and satisfies $f_j(\emptyset)\geq0$.  
\end{corollary}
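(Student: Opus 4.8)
The plan is to recognize this statement as the $\vell=\bm{0}$ specialization of Lemma~\ref{lem:submodular1} and to invoke that lemma directly. The crucial observation is that, for each $j\in[k]$, the function $f_j$ in~\eqref{submod} is literally the function $f_j$ in~\eqref{y_j-submod} evaluated at $\vell=\bm{0}$. To make this precise I would fix $j\in[k]$ and compare the two definitions: the function in~\eqref{y_j-submod} reads $\max\{\ell_j,\max_{i\in[n]}\{w_{i,j}z_i\}\}$, which at $\ell_j=0$ becomes $\max\{0,\max_{i\in[n]}\{w_{i,j}z_i\}\}$. Since $\vW\in\R^{n\times k}_+$ and $\vz\in\{0,1\}^n$, every product $w_{i,j}z_i$ is nonnegative, so the outer maximum against $0$ is vacuous and the expression collapses to $\max_{i\in[n]}\{w_{i,j}z_i\}$, which is exactly the definition in~\eqref{submod}.

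Once this identity is in hand, both assertions follow immediately from Lemma~\ref{lem:submodular1} applied with $\vell=\bm{0}$: that lemma gives $f_j(\emptyset)=\ell_j$ and the submodularity of $f_j$, so here $f_j(\emptyset)=0\geq 0$ and $f_j$ is submodular, as required. I expect no genuine obstacle in this argument. The only point that warrants any care is the verification that it is precisely the nonnegativity of $\vW$ together with the integrality of $\vz$ that renders the outer maximum redundant; for a sign-indefinite matrix the two families of functions would differ, and the reduction would fail.

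For completeness, I note that the statement can also be proved from scratch, without the $\vell=\bm{0}$ reduction, by repeating the lattice argument from the proof of Lemma~\ref{lem:submodular1}. Passing to the set-function form and writing $f_j(V)=\max_{i\in V}w_{i,j}$ (with $f_j(\emptyset)=0$ under the paper's convention $\max_{i\in\emptyset}w_{i,j}=0$), one checks for all $U,V\subseteq[n]$ that $f_j(U\cup V)=\max\{f_j(U),f_j(V)\}$ and $f_j(U\cap V)\leq\min\{f_j(U),f_j(V)\}$; adding these and using the identity $a+b=\max\{a,b\}+\min\{a,b\}$ then yields $f_j(U)+f_j(V)\geq f_j(U\cup V)+f_j(U\cap V)$, which is submodularity, while $f_j(\emptyset)=0\geq 0$ is read off directly from the convention. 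Either route is short; I would present the one-line reduction as the main argument and mention the self-contained version only as a remark.
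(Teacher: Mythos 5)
Your argument is exactly the paper's: the paper derives this corollary by observing that the functions in~\eqref{submod} coincide with those in~\eqref{y_j-submod} when $\vell=\bm{0}$ (which, as you note, relies on $\vW\in\R^{n\times k}_+$) and then invoking Lemma~\ref{lem:submodular1}. Your self-contained alternative simply replays the proof of that lemma, so there is nothing further to add.
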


In contrast to the functions $f_1,\ldots,f_k$, the function $g$ is not always submodular. However, we can characterize exactly when $g$ is submodular in terms of $\varepsilon$. For this characterization, we need to define several parameters based on $\vW$ and $\varepsilon$. For a given $\varepsilon$, let $\bar I(\varepsilon)$ be the following subset of $[n]$: 
\begin{equation}\label{indexset}
\bar I(\varepsilon):=\left\{i\in[n]:~\sum_{j\in[k]}w_{i,j}\leq\varepsilon\right\}.
\end{equation}
Here, $\bar I(\varepsilon)$ is the collection of indices $i$ with $g(\{i\})=\varepsilon$. In Examples~\ref{example1} and~\ref{example2}, we have $\bar I(\varepsilon)=\{4,5\}$. 
\begin{definition}\label{def:eps-negligible}
	We say that $\bar I(\varepsilon)$ is {\it $\varepsilon$-negligible} if either 
	\begin{itemize}
		\item $\bar I(\varepsilon)=\emptyset$ or 
		\item $\bar I(\varepsilon)\neq\emptyset$ and $\bar I(\varepsilon)$ satisfies both of the following two conditions:
		\begin{align}
		&\text{$\quad\max\limits_{i\in \bar I(\varepsilon)}\left\{w_{i,j}\right\}\leq w_{i,j}$ for every $i\in[n]\setminus \bar I(\varepsilon)$ and $j\in[k]$,}\tag{C1}\label{condition1}\\
		&\text{$\quad\sum_{j\in[k]}\max\limits_{i\in \bar I(\varepsilon)}\left\{w_{i,j}\right\}\leq \varepsilon$.}\tag{C2}\label{condition2}
		\end{align}
	\end{itemize}
\end{definition}
\begin{example}\label{ex1,2-negligible}
In Example~\ref{example1}, it can be readily checked that $\bar I(\varepsilon)$ satisfies~\eqref{condition1} and~\eqref{condition2}, so $I(\varepsilon)$ is $\varepsilon$-negligible. The matrix $\vW$ of Example~\ref{example2} is the same as that of Example~\ref{example1}, while the value of $\varepsilon$ is higher in Example~\ref{example2}. Hence, $\bar I(\varepsilon)$ in Example~\ref{example2} is also $\varepsilon$-negligible.
\end{example}
In Definition~\ref{def:eps-negligible}, \eqref{condition2} imposes that $g(\bar I(\varepsilon))=\varepsilon$, and~\eqref{condition1} requires that $f_j(\{i\}\cup \bar I(\varepsilon))=f_j(\{i\})$ for any $i\in[n]\setminus \bar I(\varepsilon)$. In fact, we can argue that if $\bar I(\varepsilon)$ is $\varepsilon$-negligible, $\bar I(\varepsilon)$ does not affect the value of $g$; this is why we call this property $\varepsilon$-``negligibility." The following lemma formalizes this observation.
\begin{lemma}\label{lem:negligible}
Let $g$ be as defined in \eqref{submod}. If $\bar I(\varepsilon)$ is $\varepsilon$-negligible, then $g(U)=g(U\setminus \bar I(\varepsilon))$ for every $U\subseteq [n]$.
\end{lemma}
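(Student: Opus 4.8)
The plan is to work with the set-function forms $f_j(U)=\max_{i\in U}w_{i,j}$ (with the convention $\max_{i\in\emptyset}=0$) and $g(U)=\max\{\varepsilon,\sum_{j\in[k]}f_j(U)\}$, and to split on whether $U$ retains any index outside $\bar I(\varepsilon)$. Writing $U':=U\setminus\bar I(\varepsilon)$ and $M_j:=\max_{i\in\bar I(\varepsilon)}w_{i,j}$, I would first note that each $f_j$, and hence $g$, is monotone under inclusion, so that $U'\subseteq U$ gives $g(U')\le g(U)$ for free; the entire task is therefore to prove the reverse inequality $g(U)\le g(U')$. The case $\bar I(\varepsilon)=\emptyset$ is vacuous since then $U'=U$, so I would assume $\bar I(\varepsilon)\ne\emptyset$ and that~\eqref{condition1} and~\eqref{condition2} hold.

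The reverse inequality would be handled by two cases. If $U'\ne\emptyset$, I pick any $i^\ast\in U'$; since $i^\ast\in[n]\setminus\bar I(\varepsilon)$, condition~\eqref{condition1} yields $M_j\le w_{i^\ast,j}\le f_j(U')$ for every $j\in[k]$. Decomposing $f_j(U)=\max\{f_j(U'),\,\max_{i\in U\cap\bar I(\varepsilon)}w_{i,j}\}$ and using $\max_{i\in U\cap\bar I(\varepsilon)}w_{i,j}\le M_j\le f_j(U')$, I conclude $f_j(U)=f_j(U')$ for all $j$, hence $g(U)=g(U')$. If instead $U'=\emptyset$, i.e.\ $U\subseteq\bar I(\varepsilon)$, then $f_j(U)\le M_j$ for each $j$, so condition~\eqref{condition2} forces $\sum_{j\in[k]}f_j(U)\le\sum_{j\in[k]}M_j\le\varepsilon$ and therefore $g(U)=\varepsilon$; since $g(U')=g(\emptyset)=\max\{\varepsilon,0\}=\varepsilon$ (as $\varepsilon\ge0$), equality again follows.

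The only real subtlety is recognizing that the two negligibility conditions play disjoint roles in the two cases: \eqref{condition1} is the coordinatewise domination that lets an index surviving in $U'$ swallow the contribution of $\bar I(\varepsilon)$, whereas \eqref{condition2} is precisely what is needed in the degenerate case $U\subseteq\bar I(\varepsilon)$ so that the $\varepsilon$-floor in $g$ absorbs everything. I do not anticipate any further difficulty, as each step reduces to elementary monotonicity of maxima once this split is in place.
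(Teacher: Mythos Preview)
Your proposal is correct and follows essentially the same approach as the paper: both split on whether $U\setminus\bar I(\varepsilon)$ is empty, invoking~\eqref{condition1} to show $f_j(U)=f_j(U')$ in the nonempty case and~\eqref{condition2} to force $g(U)=\varepsilon=g(\emptyset)$ in the empty case. Your version is slightly more explicit in writing out the decomposition $f_j(U)=\max\{f_j(U'),\max_{i\in U\cap\bar I(\varepsilon)}w_{i,j}\}$, but the argument is the same.
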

\begin{proof}
Suppose $\bar I(\varepsilon)$ is nonempty and satisfies conditions~\eqref{condition1} and~\eqref{condition2}. Take a subset $U$ of $[n]$. If $U\subseteq \bar I(\varepsilon)$, then $g(U)\leq g(\bar I(\varepsilon))$ because $g$ is a monotone function. Since $\sum_{j\in[k]}\max\limits_{i\in \bar I(\varepsilon)}\left\{w_{i,j}\right\}\leq \varepsilon$, we obtain $g(\bar I(\varepsilon))=\varepsilon$ by definition of $g$ in~\eqref{submod}. So, $g(U)=g(\emptyset)=\varepsilon$ in this case. If $U\setminus \bar I(\varepsilon)\neq\emptyset$, then $\sum_{j\in[k]}w_{p,j}>\varepsilon$ for some $p\in U$, implying in turn that $\sum_{j\in [k]}\max_{i\in U}\left\{w_{i,j}\right\}>\varepsilon$. Moreover, as $\bar I(\varepsilon)$ satisfies~\eqref{condition1}, $\sum_{j\in [k]}\max_{i\in U}\left\{w_{i,j}\right\}=\sum_{j\in [k]}\max_{i\in U\setminus \bar I(\varepsilon)}\left\{w_{i,j}\right\}$, and therefore, $g(U)=g(U\setminus \bar I(\varepsilon))$, as required.
\end{proof}
Next we show that $\varepsilon$-negligibility is necessary for $g$ to be submodular.
\begin{lemma}\label{g-submod-negligible}
If the function $g$ defined as in~\eqref{submod} is submodular,	then $\bar I(\varepsilon)$ is $\varepsilon$-negligible.
\end{lemma}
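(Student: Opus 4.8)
The plan is to prove the contrapositive: assuming $\bar I(\varepsilon)$ is \emph{not} $\varepsilon$-negligible, I will exhibit an explicit pair of sets violating the submodularity inequality $g(A)+g(B)\ge g(A\cup B)+g(A\cap B)$. By Definition~\ref{def:eps-negligible}, failing $\varepsilon$-negligibility means $\bar I(\varepsilon)\neq\emptyset$ while at least one of \eqref{condition1} or \eqref{condition2} fails, and I would treat these two failure modes separately. Throughout I write $h:=\sum_{j\in[k]}f_j$, so that $h$ is submodular and monotone with $h(\emptyset)=0$ and $g=\max\{\varepsilon,h\}$ by \eqref{submod}. The single fact driving both cases is that $g(\emptyset)=\varepsilon$ and $g(\{i\})=\varepsilon$ for every $i\in\bar I(\varepsilon)$ (since there $h(\{i\})=\sum_{j\in[k]}w_{i,j}\le\varepsilon$); equivalently, in diminishing-returns language, every $i\in\bar I(\varepsilon)$ has marginal $g(\{i\})-g(\emptyset)=0$. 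In each case I would locate a set $T\not\ni i$ at which the marginal of some $i\in\bar I(\varepsilon)$ is \emph{strictly positive}, contradicting that it was $0$ at $\emptyset\subseteq T$; concretely this is the pair $A=\{i\}$, $B=T$ in the intersection/union form of submodularity.

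For the case where \eqref{condition1} fails, there exist $i_0\in[n]\setminus\bar I(\varepsilon)$ and $j_0\in[k]$ with $\max_{i\in\bar I(\varepsilon)}w_{i,j_0}>w_{i_0,j_0}$; let $i^*\in\bar I(\varepsilon)$ attain this maximum. I would take $A=\{i^*\}$ and $B=\{i_0\}$. Since $i_0\notin\bar I(\varepsilon)$ gives $h(\{i_0\})>\varepsilon$, and $h$ is monotone, both $g(\{i_0\})$ and $g(\{i_0,i^*\})$ coincide with $h$, so the marginal $g(\{i_0,i^*\})-g(\{i_0\})$ reduces to $\sum_{j\in[k]}\left(w_{i^*,j}-w_{i_0,j}\right)_+\ge w_{i^*,j_0}-w_{i_0,j_0}>0$, while $g(\{i^*\})-g(\emptyset)=0$. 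This is the desired violation, so $g$ is not submodular.

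For the case where \eqref{condition2} fails, note $\sum_{j\in[k]}\max_{i\in\bar I(\varepsilon)}w_{i,j}$ is exactly $h(\bar I(\varepsilon))$, so the failure says $h(\bar I(\varepsilon))>\varepsilon$, i.e.\ $g(\bar I(\varepsilon))>\varepsilon$. I would enumerate $\bar I(\varepsilon)=\{a_1,\dots,a_r\}$ and walk up the chain of prefixes, choosing the smallest index $t$ with $h(\{a_1,\dots,a_t\})>\varepsilon$; such $t$ exists because the full set exceeds $\varepsilon$, and $t\ge 2$ because each singleton from $\bar I(\varepsilon)$ keeps $h\le\varepsilon$. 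Taking $B=\{a_1,\dots,a_{t-1}\}$ (so $g(B)=\varepsilon$ by minimality) and $A=\{a_t\}$ yields marginal $g(B\cup\{a_t\})-g(B)=h(B\cup\{a_t\})-\varepsilon>0$, again strictly exceeding the zero marginal of $a_t$ at $\emptyset$; hence $g$ is not submodular.

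I expect the main obstacle to be conceptual rather than computational: the function $\max\{\varepsilon,h\}$ of a submodular $h$ loses submodularity precisely through its ``flattening'' at the constant value $\varepsilon$, and the clean way to exploit this is the observation that every element of $\bar I(\varepsilon)$ has zero marginal at $\emptyset$ yet can acquire a positive marginal once enough mass accumulates. The crux is recognizing that \eqref{condition1} and \eqref{condition2} are exactly the two ways such a positive marginal can arise — either a single $i_0$ outside $\bar I(\varepsilon)$ that is coordinatewise dominated by some $i^*\in\bar I(\varepsilon)$, or a cumulative overflow of $h$ within $\bar I(\varepsilon)$ itself — so that negating $\varepsilon$-negligibility forces one of these; once the correct $T$ (equivalently $B$) is chosen, the verification is immediate from $g=\max\{\varepsilon,h\}$.
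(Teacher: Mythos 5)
Your proposal is correct and follows essentially the same route as the paper: argue the contrapositive, split on which of \eqref{condition1} or \eqref{condition2} fails, and exhibit an explicit pair violating $g(A)+g(B)\geq g(A\cup B)+g(A\cap B)$; your first case uses exactly the paper's pair $\{p\},\{q\}$. The only (immaterial) difference is in the second case, where the paper takes an inclusion-minimal subset $I\subseteq\bar I(\varepsilon)$ with $g(I)>\varepsilon$ and an arbitrary bipartition of it, while you take the minimal overflowing prefix along an ordering and split off the last singleton — both verifications are identical in substance.
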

\begin{proof}
Assume that $g$ is submodular. Suppose for a contradiction that $\bar I(\varepsilon)$ is not $\varepsilon$-negligible. Then $\bar I(\varepsilon)$ is nonempty, and~\eqref{condition1} or~\eqref{condition2} is violated. Assume that $\bar I(\varepsilon)$ does not satisfy~\eqref{condition1}. Then $w_{q,j}>w_{p,j}$ for some $j\in[k]$, $p\in [n]\setminus \bar I(\varepsilon)$ and $q\in \bar I(\varepsilon)$. By our choice of $q$, we have $g(\left\{q\right\})=\varepsilon$. Moreover, $w_{q,j}>w_{p,j}$ implies that $g(\left\{p,q\right\})=\sum_{j\in[k]}\max\{w_{p,j},w_{q,j}\}>\sum_{j\in[k]}w_{p,j}=g(\left\{p\right\})$. Since $g(\emptyset)=\varepsilon$, it follows that $g(\{p\})+g(\{q\})<g(\emptyset)+g(\{p,q\})$, a contradiction to the submodularity of $g$. Thus, we may assume that $\bar I(\varepsilon)$ does not satisfy~\eqref{condition2}. Then $\sum_{j\in[k]}\max\limits_{i\in \bar I(\varepsilon)}\left\{w_{i,j}\right\}>\varepsilon$, so $g(\bar I(\varepsilon))=\sum_{j\in[k]}\max\limits_{i\in \bar I(\varepsilon)}\left\{w_{i,j}\right\}$. Now take a minimal subset $I$ of $\bar I(\varepsilon)$ with $g(I)>\varepsilon$. Since $I\subseteq \bar I(\varepsilon)$ and $g(I)>\varepsilon$, we know that $|I|\geq 2$. That means that one can find two nonempty subsets $U,V$ of $I$ partitioning $I$. By our minimal choice of $I$, we have $g(U)=g(V)=\varepsilon$, but this indicates that $g(U)+g(V)<g(\emptyset)+g(I)=g(U\cap V)+g(U\cup V)$, a contradiction to the submodularity of $g$. Therefore, $\bar I(\varepsilon)$ is $\varepsilon$-negligible.
\end{proof}

On the other hand, it turns out that $\varepsilon$-negligibility alone does not always guarantee that $g$ is submodular. If $\bar I(\varepsilon)=[n]$, $\bar I(\varepsilon)$ being $\varepsilon$-negligible means that $g(U)=g(\emptyset)=\varepsilon$ for every $U\subseteq[n]$ and thus $g$ is clearly submodular. However, when $\bar I(\varepsilon)$ is a strict subset of $[n]$, $g$ may not necessarily be submodular even though $\bar I(\varepsilon)$ is $\varepsilon$-negligible.
\begin{example}\label{g-not-submodular}
In Example~\ref{example2}, we have observed that $\bar I(\varepsilon)=\{4,5\}$ and $\bar I(\varepsilon)$ is $\varepsilon$-negligible. By definition, we have $g(\emptyset)=\varepsilon=9$. Since $2,3\notin \bar I(\varepsilon)$, we have that $g(\{2\})=w_{2,1} +w_{2,2}=10$, $g(\{3\})=w_{3,1}+w_{3,2}=15$, and $g(\{2,3\})=\max\{w_{2,1},w_{3,1}\}+\max\{w_{2,2}+w_{3,2}\}=17$. Then $g(\{2\})+g(\{3\})=25$ is less than $g(\{2,3\})+g(\emptyset)=26$, so $g$ is not submodular. 
\end{example}
In order to understand when the function $g$ is submodular, let us take a closer look at Example~\ref{g-not-submodular}. In this example, $g(\{2\})+g(\{3\})-g(\{2,3\})$ is equal to $\min\{w_{2,1},w_{3,1}\}+\min\{w_{2,2}+w_{3,2}\}$, and this value is less than $\varepsilon=g(\emptyset)$, implying that $g$ is not submodular. In general, for any distinct indices $p,q\in[n]\setminus\bar I(\varepsilon)$, 
\begin{equation}\label{eq:p,q-submod}
g(\{p\})+g(\{q\})-g(\{p,q\})=\sum\limits_{j\in[k]}\min\left\{w_{p,j},~w_{q,j}\right\},
\end{equation}
and this quantity needs to be greater than or equal to $\varepsilon=g(\emptyset)$ for $g$ to be submodular. To formalize this, we define another parameter $L_{\vW}(\varepsilon)\in\R_+$ as follows:
\begin{equation}\label{L_W}
L_{\vW}(\varepsilon):=\begin{cases}
\min\limits_{p,q\in [n] \setminus \bar I(\varepsilon)}\left\{\sum\limits_{j\in[k]}\min\left\{w_{p,j},~w_{q,j}\right\}\right\},&\text{if}~~\bar I(\varepsilon)\neq[n],\\
+\infty,&\text{if}~~\bar I(\varepsilon)=[n].
\end{cases}
\end{equation}
\begin{example}\label{ex1,2-parameters}
	In Example~\ref{example1}, we have $\bar I(\varepsilon)=\{4,5\}$ and $L_{\vW}(\varepsilon)=w_{2,1}+w_{3,2}=8$. Moreover, as $\bar I(\varepsilon)=\{4,5\}$ in Example~\ref{example2} as well, we still have $L_{\vW}(\varepsilon)=8$ in Example~\ref{example2}. 
\end{example}
\begin{lemma}\label{g-submod-LW}
	If the function $g$ defined as in~\eqref{submod} is submodular, then $\varepsilon\leq L_{\vW}(\varepsilon)$.
\end{lemma}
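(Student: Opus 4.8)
The plan is to read off the bound $\varepsilon\le L_{\vW}(\varepsilon)$ directly from the submodularity inequality applied to two carefully chosen singletons, using the identity \eqref{eq:p,q-submod} that has already been recorded. First I would dispose of the trivial case $\bar I(\varepsilon)=[n]$: here $L_{\vW}(\varepsilon)=+\infty$ by the definition in \eqref{L_W}, so the claim holds vacuously. Hence I may assume $\bar I(\varepsilon)\neq[n]$, so that $[n]\setminus\bar I(\varepsilon)\neq\emptyset$ and the minimum defining $L_{\vW}(\varepsilon)$ is taken over a nonempty set.

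Next, let $p,q\in[n]\setminus\bar I(\varepsilon)$ be a pair attaining the minimum in \eqref{L_W}, so that $L_{\vW}(\varepsilon)=\sum_{j\in[k]}\min\{w_{p,j},w_{q,j}\}$. Since $p,q\notin\bar I(\varepsilon)$, both $\sum_{j\in[k]}w_{p,j}>\varepsilon$ and $\sum_{j\in[k]}w_{q,j}>\varepsilon$, which lets me evaluate $g$ on the relevant singletons and their union explicitly from \eqref{submod}: one checks that $g(\{p\})=\sum_{j\in[k]}w_{p,j}$, $g(\{q\})=\sum_{j\in[k]}w_{q,j}$, and $g(\{p,q\})=\sum_{j\in[k]}\max\{w_{p,j},w_{q,j}\}$, since in each case the $\sum_{j\in[k]}f_j$ term dominates $\varepsilon$. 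The identity \eqref{eq:p,q-submod} packages exactly this computation, giving $g(\{p\})+g(\{q\})-g(\{p,q\})=L_{\vW}(\varepsilon)$.

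Finally, I would invoke submodularity of $g$ with $A=\{p\}$ and $B=\{q\}$: since $\{p\}\cup\{q\}=\{p,q\}$ and $\{p\}\cap\{q\}=\emptyset$, the defining inequality reads $g(\{p\})+g(\{q\})\geq g(\{p,q\})+g(\emptyset)$. Because $g(\emptyset)=\varepsilon$ by \eqref{submod}, rearranging yields $L_{\vW}(\varepsilon)=g(\{p\})+g(\{q\})-g(\{p,q\})\geq g(\emptyset)=\varepsilon$, which is the desired conclusion.

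I do not expect a serious obstacle here; the statement is essentially the positive counterpart of the argument already used in Lemma~\ref{g-submod-negligible}, where a submodularity violation was extracted from a pair in $[n]\setminus\bar I(\varepsilon)$. The only point requiring a little care is the boundary case where $[n]\setminus\bar I(\varepsilon)$ is a single index, forcing the minimizing ``pair'' to have $p=q$; there \eqref{eq:p,q-submod} does not apply directly, but the conclusion is immediate because $L_{\vW}(\varepsilon)=\sum_{j\in[k]}w_{p,j}>\varepsilon$ as $p\notin\bar I(\varepsilon)$.
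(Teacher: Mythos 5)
Your proof is correct and takes essentially the same route as the paper's: both evaluate $g$ on the singletons $\{p\},\{q\}$ for a minimizing pair $p,q\in[n]\setminus\bar I(\varepsilon)$, use $g(\{p\})+g(\{q\})-g(\{p,q\})=\sum_{j\in[k]}\min\{w_{p,j},w_{q,j}\}$, and invoke the submodularity inequality against $g(\emptyset)=\varepsilon$; the paper merely phrases this as a proof by contradiction rather than a direct rearrangement. Your side remark on the degenerate case $p=q$ is the same observation the paper makes when it deduces that the witnessing pair can be taken distinct.
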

\begin{proof}
Suppose for a contradiction that $\varepsilon>L_{\vW}(\varepsilon)$. Then, $L_{\vW}(\varepsilon)\neq\infty$, implying  $\bar I(\varepsilon)\neq [n]$ and $\varepsilon>\sum_{j\in[k]}\min\left\{w_{p,j},~w_{q,j}\right\}$ for some $p,q\in [n] \setminus \bar I(\varepsilon)$. Moreover, because both $\sum_{j\in[k]} w_{p,j}$ and $\sum_{j\in[k]}w_{q,j}$ are greater than $\varepsilon$, we deduce that $p$ and $q$ are distinct. Then,  
	\begin{align*}
	g\left(\{p\}\right)+g\left(\{q\}\right)=\sum_{j\in[k]} w_{p,j}+\sum_{j\in[k]}w_{q,j}&=\sum_{j\in[k]}\max\left\{w_{p,j},~w_{q,j}\right\}+\sum_{j\in[k]}\min\left\{w_{p,j},~w_{q,j}\right\}\\
	&=g\left(\{p,q\}\right)+\sum_{j\in[k]}\min\left\{w_{p,j},~w_{q,j}\right\}<g\left(\{p,q\}\right)+g\left(\emptyset\right),
	\end{align*}
	where the strict inequality follows from $g(\emptyset)=\varepsilon$. This is a contradiction to the assumption that $g$ is submodular. Hence, $\varepsilon\leq L_{\vW}(\varepsilon)$, as required.
\end{proof}
By Lemmas~\ref{g-submod-negligible} and~\ref{g-submod-LW}, both of the conditions that $\bar I(\varepsilon)$ is $\varepsilon$-negligible and $\varepsilon\leq L_{\vW}(\varepsilon)$ are necessary for the submodularity of $g$. In fact, we will next see that these two conditions are also sufficient to guarantee that $g$ is submodular. So, whether the function $g$ is submodular or not is determined entirely by $\bar I(\varepsilon)$ and $L_{\vW}(\varepsilon)$.

\begin{lemma}\label{lem:g-submodular}
The function $g$ defined as in~\eqref{submod} is submodular if and only if $\bar I(\varepsilon)$ is $\varepsilon$-negligible and $\varepsilon\leq L_{\vW}(\varepsilon)$.
\end{lemma}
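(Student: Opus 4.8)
The plan is to prove the remaining direction: assuming $\bar I(\varepsilon)$ is $\varepsilon$-negligible and $\varepsilon\leq L_{\vW}(\varepsilon)$, I would show $g$ is submodular. The two converse directions are already established by Lemmas~\ref{g-submod-negligible} and~\ref{g-submod-LW}, so only sufficiency remains. I would verify the submodular inequality $g(A)+g(B)\geq g(A\cup B)+g(A\cap B)$ for all $A,B\subseteq[n]$ directly.

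First I would reduce to the case where $\bar I(\varepsilon)$ plays no role. By Lemma~\ref{lem:negligible}, $\varepsilon$-negligibility gives $g(U)=g(U\setminus\bar I(\varepsilon))$ for every $U$. Since $(A\cup B)\setminus\bar I(\varepsilon)=(A\setminus\bar I(\varepsilon))\cup(B\setminus\bar I(\varepsilon))$ and $(A\cap B)\setminus\bar I(\varepsilon)=(A\setminus\bar I(\varepsilon))\cap(B\setminus\bar I(\varepsilon))$, the submodular inequality for $A,B$ is equivalent to the same inequality for $A'=A\setminus\bar I(\varepsilon)$ and $B'=B\setminus\bar I(\varepsilon)$. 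Hence it suffices to prove submodularity restricted to subsets of $[n]\setminus\bar I(\varepsilon)$, i.e., I may assume $A,B\subseteq[n]\setminus\bar I(\varepsilon)$.

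With this reduction in hand, I would split into cases based on whether the sets are empty and whether $g$ takes the value $\varepsilon$ on them. Recall $g=\max\{\varepsilon,\sum_j f_j\}$ where each $f_j(\vz)=\max_i\{w_{i,j}z_i\}$ is submodular (Corollary~\ref{cor:f-submodular}); write $h:=\sum_{j\in[k]}f_j$, which is submodular as a sum of submodular functions, and note $h(\emptyset)=0$. The key nontrivial case is when $A,B$ are both nonempty proper subsets on which $h$ already exceeds $\varepsilon$, so that $g=h$ there; then submodularity of $h$ suffices. The delicate case is the boundary, where $g(A\cap B)=\varepsilon$ (typically $A\cap B=\emptyset$, giving $g(\emptyset)=\varepsilon$) but $g(A\cup B)=h(A\cup B)>\varepsilon$. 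Here I need $g(A)+g(B)\geq h(A\cup B)+\varepsilon$, and since $g(A)\geq h(A)$, $g(B)\geq h(B)$, it would suffice to show $h(A)+h(B)\geq h(A\cup B)+\varepsilon$. The condition $\varepsilon\leq L_{\vW}(\varepsilon)$ enters precisely here: $h(A)+h(B)-h(A\cup B)-h(A\cap B)\geq 0$ by submodularity, but I need the stronger gap $\geq\varepsilon$ when $A\cap B=\emptyset$. I expect this to be the main obstacle.

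To close this gap I would exploit the explicit max-structure of $h$. Pick $p\in A$, $q\in B$; since $p,q\in[n]\setminus\bar I(\varepsilon)$, formula~\eqref{eq:p,q-submod} gives $\sum_{j\in[k]}\min\{w_{p,j},w_{q,j}\}\geq L_{\vW}(\varepsilon)\geq\varepsilon$. The plan is to bound the submodularity defect of $h$ from below by such a pairwise min quantity. Concretely, choosing $p\in A$ and $q\in B$ attaining the relevant coordinatewise maxima, for each coordinate $j$ the contribution to $h(A)+h(B)-h(A\cup B)$ is at least $\min\{\max_{i\in A}w_{i,j},\max_{i\in B}w_{i,j}\}$, which dominates $\min\{w_{p,j},w_{q,j}\}$ for suitable representatives; summing over $j$ yields a lower bound of $\sum_j\min\{w_{p,j},w_{q,j}\}\geq\varepsilon$. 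I would carry out this coordinatewise argument carefully, since matching the correct indices $p,q$ realizing the maxima on $A$ and $B$ is the technical crux. Once $h(A)+h(B)\geq h(A\cup B)+\varepsilon$ is established in the boundary case, combining with the straightforward cases (both sets empty, one set empty, or $h\geq\varepsilon$ throughout) completes the verification of submodularity of $g$, finishing the proof.
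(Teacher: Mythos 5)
Your proposal is correct and follows essentially the same route as the paper: reduce to subsets of $[n]\setminus\bar I(\varepsilon)$ via Lemma~\ref{lem:negligible}, use submodularity of $\sum_j f_j$ when the intersection is nonempty, and in the disjoint case bound $\sum_j\min\{f_j(A),f_j(B)\}$ below by $\sum_j\min\{w_{p,j},w_{q,j}\}\geq L_{\vW}(\varepsilon)\geq\varepsilon$. The step you flag as the ``technical crux'' is actually immediate: any fixed $p\in A$ and $q\in B$ satisfy $f_j(A)\geq w_{p,j}$ and $f_j(B)\geq w_{q,j}$ simultaneously for all $j$, so no careful matching of maximizers is needed.
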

\begin{proof}
{\bf ($\Rightarrow$)}: This direction is settled by Lemmas~\ref{g-submod-negligible} and~\ref{g-submod-LW}.

{\bf ($\Leftarrow$)}: Assume that $\bar I(\varepsilon)$ is $\varepsilon$-negligible and $\varepsilon\leq L_{\vW}(\varepsilon)$. We will show that $g(U)+g(V)\geq g(U\cup V)+g(U\cap V)$ for every two sets $U,V\subseteq[n]$. If $\bar I(\varepsilon)=[n]$, then we have $g(U)=\varepsilon$ for every subset $U$ of $[n]$ due to~\eqref{condition2}. Thus, we may assume that $\bar I(\varepsilon)\neq[n]$. By Lemma~\ref{lem:negligible}, for every two subsets $U,V\subseteq[n]$, $g(U)+g(V)\geq g(U\cup V)+g(U\cap V)$ holds if and only if $g\left(U^\prime\right)+g\left(V^\prime\right)\geq g\left(U^\prime\cup V^\prime\right)+g\left(U^\prime\cap V^\prime\right)$,
where $U^\prime:=U\setminus \bar I(\varepsilon)$ and $V^\prime:=V\setminus \bar I(\varepsilon)$, holds. This means that it is sufficient to consider subsets of $[n]\setminus \bar I(\varepsilon)$. Consider two sets $U,V\subseteq [n]\setminus \bar I(\varepsilon)$. If $U=\emptyset$ or $V=\emptyset$, the inequality trivially holds due to the monotonicity of $g$. So, we may assume that $U,V\neq\emptyset$. First, suppose that $U\cap V\neq\emptyset$. Because $U,V\subseteq [n]\setminus \bar I(\varepsilon)$, %
we deduce that $g(X)=\sum_{j\in[k]}f_j(X)$ for any $X\in\{U,V,U\cup V,U\cap V\}$. Then, Corollary~\ref{cor:f-submodular} implies that $g(U)+g(V)\geq g(U\cup V)+g(U\cap V)$. Now, consider the case of $U\cap V=\emptyset$. Note that for each $j\in[k]$, the definition of $f_j(V)=\max_{i\in V}\{w_{i,j} \}$ implies that 
\[
	f_j(U)+f_j(V)-f_j(U\cup V) = \max\{f_j(U),f_j(V)\} + \min\{f_j(U),f_j(V)\} - f_j(U\cup V) = \min\{f_j(U),f_j(V)\}.
\]
Hence, we have
\[	
	g(U)+g(V)-g(U\cup V)=\sum_{j\in[k]}\left(f_j(U)+f_j(V)-f_j(U\cup V)  \right)=\sum_{j\in[k]} \min\{f_j(U),f_j(V)\}.
\]
So, it suffices to argue that $\sum_{j\in[k]} \min\{f_j(U),f_j(V)\}\geq g(\emptyset)=\varepsilon$. Since $U,V\neq\emptyset$ and $U\cap V=\emptyset$, there exist distinct $p,q\in [n]\setminus \bar I(\varepsilon)$ such that $p\in U$ and $q\in V$. Then $f_j(U)\geq f_j(\{p\})=w_{p,j}$ and $f_j(V)\geq f_j(\{q\})=w_{q,j}$, implying in turn that 
\[\sum_{j\in[k]} \min\{f_j(U),f_j(V)\}\geq \sum_{j\in[k]} \min\{w_{p,j},w_{q,j}\}\geq L_{\vW}(\varepsilon),
\]
where the last inequality follows from the definition of $L_{\vW}(\varepsilon)$ in~\eqref{L_W}. Finally, our assumption that $\varepsilon\leq L_{\vW}(\varepsilon)$ implies that $\sum_{j\in[k]} \min\{f_j(U),f_j(V)\}\geq \varepsilon$ as desired.
\end{proof}

Therefore, Lemma~\ref{lem:g-submodular}, along with Corollary~\ref{cor:f-submodular}, establish that $f_1,\ldots,f_k$ and $g$ are submodular when $\bar I(\varepsilon)$ is $\varepsilon$-negligible and $\varepsilon\leq L_{\vW}(\varepsilon)$. 
Note that $\bar I(\varepsilon)$ can be found in $O(kn)$ time and that $L_{\vW}(\varepsilon)$ can be computed in $O(kn^2)$ time, so testing whether $g$ is submodular can be done in polynomial time.

In Section~\ref{sec:aggregated}, we introduced the parameter $L_{\vW,\Theta}$ that depends on $\vW$ and a sequence $\Theta$ of indices in $[n]$ to define the aggregated mixing inequality~\eqref{aggregate} derived from $\Theta$. The following lemma illustrates a relationship between $L_{\vW}(\varepsilon)$ and $L_{\vW,\Theta}$:

\begin{lemma}\label{lem:L_Wbound}
	If $\bar I(\varepsilon)\neq[n]$, then $L_{\vW}(\varepsilon)=\min_{\Theta}\left\{L_{\vW,\Theta}:~\Theta~\text{is a nonempty sequence in}~[n]\setminus \bar I(\varepsilon)\right\}$.
\end{lemma}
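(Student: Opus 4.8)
The plan is to prove the two inequalities $L_{\vW}(\varepsilon)\le\min_{\Theta}L_{\vW,\Theta}$ and $L_{\vW}(\varepsilon)\ge\min_{\Theta}L_{\vW,\Theta}$ separately, both minima ranging over nonempty sequences $\Theta$ in $N:=[n]\setminus\bar I(\varepsilon)$. The starting point is to read off from~\eqref{L_WTheta} the precise structure of $L_{\vW,\Theta}$ for a sequence $\Theta=\{i_1\to\cdots\to i_\theta\}$: it is the minimum over the positions $t\in[\theta]$ of the quantities $c_t(\Theta):=\sum_{j\in[k]}\min\left\{w_{i_t,j},\,\max\{w_{i,j}:~i_t\text{ precedes }i\text{ in }\Theta\}\right\}$, where for the last position $t=\theta$ no index is preceded by $i_\theta$ and, consistently with Example~\ref{example1revisit}, the corresponding term is the uncapped sum $c_\theta(\Theta)=\sum_{j\in[k]}w_{i_\theta,j}$.

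For the direction $\ge$, I would fix any nonempty $\Theta$ in $N$ and argue that every term satisfies $c_t(\Theta)\ge L_{\vW}(\varepsilon)$, which immediately yields $L_{\vW,\Theta}=\min_{t}c_t(\Theta)\ge L_{\vW}(\varepsilon)$. For the last position, $c_\theta(\Theta)=\sum_{j\in[k]}w_{i_\theta,j}=\sum_{j\in[k]}\min\{w_{i_\theta,j},w_{i_\theta,j}\}\ge L_{\vW}(\varepsilon)$ directly from~\eqref{L_W} with $p=q=i_\theta\in N$. For any position $t<\theta$, the immediate successor $i_{t+1}$ is one of the indices that $i_t$ precedes, so $\max\{w_{i,j}:~i_t\text{ precedes }i\}\ge w_{i_{t+1},j}$ for every $j\in[k]$; since $\min$ is nondecreasing in its second argument, $c_t(\Theta)\ge\sum_{j\in[k]}\min\{w_{i_t,j},w_{i_{t+1},j}\}\ge L_{\vW}(\varepsilon)$, the last step using $i_t,i_{t+1}\in N$ in~\eqref{L_W}.

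For the direction $\le$, it suffices to exhibit a single sequence attaining $L_{\vW}(\varepsilon)$. Let $p^*,q^*\in N$ attain the minimum in~\eqref{L_W}. If $p^*\ne q^*$, I would take $\Theta=\{p^*\to q^*\}$, whose two terms are $\sum_{j\in[k]}\min\{w_{p^*,j},w_{q^*,j}\}$ and $\sum_{j\in[k]}w_{q^*,j}$; since the former does not exceed the latter, $L_{\vW,\Theta}=\sum_{j\in[k]}\min\{w_{p^*,j},w_{q^*,j}\}=L_{\vW}(\varepsilon)$. If the minimum is attained with $p^*=q^*$, I would instead use the singleton $\Theta=\{p^*\}$, for which $L_{\vW,\Theta}=\sum_{j\in[k]}w_{p^*,j}=L_{\vW}(\varepsilon)$. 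Either way $\min_{\Theta}L_{\vW,\Theta}\le L_{\vW}(\varepsilon)$, and combining the two directions gives the claimed equality.

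The main obstacle, and the only genuinely nontrivial point, is the mismatch between the two expressions: $L_{\vW}(\varepsilon)$ compares a single fixed pair $(p,q)$ coordinatewise across all $j\in[k]$, whereas inside $c_t(\Theta)$ the index attaining $\max_i w_{i,j}$ over the successors of $i_t$ may vary with the coordinate $j$, so $c_t(\Theta)$ is a priori \emph{not} of the fixed-pair form minimized in~\eqref{L_W}. The resolution is that for a \emph{lower} bound one is free to replace this coordinatewise maximum by the values of any single successor, and choosing the immediate successor $i_{t+1}$ both keeps the index in $N$ and yields a legitimate fixed pair $(i_t,i_{t+1})$; conversely, since the true maximum only enlarges each $\min$, no sequence can undercut $L_{\vW}(\varepsilon)$, while the two-term sequence shows the bound is tight. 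A secondary point to handle carefully is the convention at the terminal position of $\Theta$, whose term must be taken as the uncapped sum $\sum_{j\in[k]}w_{i_\theta,j}$ (as in Example~\ref{example1revisit}); this value is automatically at least $L_{\vW}(\varepsilon)$ and hence never binds in the lower bound.
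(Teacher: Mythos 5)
Your proof is correct and follows essentially the same route as the paper's: the lower bound $L_{\vW,\Theta}\geq L_{\vW}(\varepsilon)$ is obtained by replacing the coordinatewise maximum with the immediate-successor pair $(i_t,i_{t+1})\in[n]\setminus\bar I(\varepsilon)$, and the matching upper bound by exhibiting the two-element sequence $\{p^*\to q^*\}$ (or the singleton $\{p^*\}$) attaining $L_{\vW}(\varepsilon)$. Your explicit treatment of the terminal position --- reading the last term as the uncapped sum $\sum_{j\in[k]}w_{i_\theta,j}$, consistent with Example~\ref{example1revisit} --- is the same reading the paper's own proof relies on, and your verification that the second term of $\{p^*\to q^*\}$ does not undercut the first is a small point the paper leaves implicit.
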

\begin{proof}
	Take a nonempty sequence $\Theta$ in $[n]\setminus \bar I(\varepsilon)$. When $\Theta=\{r\}$ for some $r\in[n]\setminus \bar I(\varepsilon)$, $L_{\vW,\Theta}=\sum_{j\in[k]}w_{r,j}$, so $L_{\vW}(\varepsilon)\leq L_{\vW,\Theta}$ in this case. When $\Theta=\{i_1\to\cdots\to i_\theta\}$ with $\theta\geq 2$, for any $s\in[\theta]$ we have 
	\[\min\left\{w_{i_s,j},~\max\left\{w_{i,j}:~\text{$i_s$ precedes $i$  in $\Theta$}\right\}\right\}\geq \min\left\{w_{i_s,j},~w_{i_{s+1},j}\right\}\] 
	where $w_{i_{\theta+1},j}$ is set to 0 for convention. Then it follows from the definition of $L_{\vW,\Theta}$ in~\eqref{L_WTheta} that $L_{\vW,\Theta}\geq \min\left\{\sum_{j\in[k]}\min\left\{w_{i_s,j},~w_{i_{s+1},j}\right\}:~s\in[\theta]\right\}$. Consequently, from the definition of $L_{\vW}(\varepsilon)$, we deduce that 
	$L_{\vW}(\varepsilon)\leq L_{\vW,\Theta}$. In both cases, we get $L_{\vW}(\varepsilon)\leq L_{\vW,\Theta}$.
	
	Now it remains to show $L_{\vW}(\varepsilon)\geq\min\left\{L_{\vW,\Theta}:~\Theta~\text{is a nonempty sequence in}~[n]\setminus \bar I(\varepsilon)\right\}$. Since $\bar I(\varepsilon)\neq[n]$, either there exist distinct $p,q\in[n]\setminus \bar I(\varepsilon)$ such that $L_{\vW}(\varepsilon)=\sum_{j\in[k]}\min\left\{w_{p,j},~w_{q,j}\right\}=L_{\vW,\{p\to q\}}$ or there exists $r\in[n]\setminus \bar I(\varepsilon)$ such that $L_{\vW}(\varepsilon)=\sum_{j\in[k]}w_{r,j}=L_{\vW,\{r\}}$, implying in turn that $L_{\vW}(\varepsilon)\geq L_{\vW,\Theta}$ for some nonempty sequence $\Theta$ in $[n]\setminus \bar I(\varepsilon)$, as required.
\end{proof}

We have shown in Section~\ref{sec:mixing} that the polymatroid inequalities corresponding to the functions $f_1,\ldots,f_k$ are mixing inequalities. Although $g$ is not always submodular, we now have a complete characterization of when $g$ is submodular. In the next subsection, we show that when $g$ is indeed submodular, the corresponding polymatroid inequalities are aggregated mixing inequalities.

\subsection{Polymatroid inequalities and aggregated mixing inequalities}\label{sec:poly-agg}
Consider $\cP(\vW,\bm{0},\varepsilon)$ with $\vW\in\R_+^{n\times k}$ and $\varepsilon\in\R_+$. Then, from Lemma~\ref{lem:P-epigraph} we deduce that 
\[
\conv(\cP(\vW,\bm{0},\varepsilon))\subseteq\left\{(\vy,\vz)\in\R^k\times[0,1]^n:~(y_j,\vz)\in\conv(Q_{f_j}),~\forall j\in[k],~~ \left(y_1+\cdots+y_k,\vz\right)\in\conv(Q_{g})\right\},
\]
where $f_j,g$ are as defined in~\eqref{submod}. 
In this section we will prove that in fact equality holds in the above relation when $g$  is submodular, i.e., by Lemma~\ref{lem:g-submodular}, when $\bar I(\varepsilon)$ is $\varepsilon$-negligible and $\varepsilon\leq L_{\vW}(\varepsilon)$. 
Then, consequently, if $\bar I(\varepsilon)$ is $\varepsilon$-negligible and $\varepsilon\leq L_{\vW}(\varepsilon)$, then the separation problem over $\conv(\cP(\vW,\bm{0},\varepsilon))$ (equivalently,  $\conv(\cM(\vW,\bm{0},\varepsilon))$) can be solved in $O(kn\log n)$ time by a simple greedy algorithm. To this end, we first characterize the $\cV$-polyhedral, or inner, description of $\conv(\cP(\vW,\bm{0},\varepsilon))$. For notational purposes, we define a specific set of binary solutions as follows:
\begin{equation}\label{S_eps}
S(\varepsilon):=\left\{\vz\in\{0,1\}^n:~\sum_{j\in[k]}\max\limits_{i\in[n]}\left\{w_{i,j}z_i\right\}>\varepsilon\right\}.
\end{equation}
\begin{lemma}\label{lem:inner}
	The extreme rays of $\conv(\cP(\vW,\bm{0},\varepsilon))$ are $(\bm{e^j},\bm{0})$ for $j\in[k]$, and the extreme points are precisely the following:
		\begin{itemize}
			\item $A(z)=(\bm{y^z},\vz)$ for $\vz\in S(\varepsilon)$ where $y^z_j=\max\limits_{i\in[n]}\left\{w_{i,j}z_i\right\}$ for $j\in[k]$,
			\item $B(\vz,d)=(\bm{y^{z,d}},\vz)$ for $\vz\in\{0,1\}^n\setminus S(\varepsilon)$ and $d\in[k]$ where
			\[
			y^{z,d}_j=\begin{cases}
			\max\limits_{i\in[n]}\left\{w_{i,j}z_i\right\},&\text{if}~~j\neq d,\\
			\max\limits_{i\in[n]}\left\{w_{i,d}z_i\right\}+\left(\varepsilon-\sum_{j\in[k]}\max\limits_{i\in[n]}\left\{w_{i,j}z_i\right\}\right),&\text{if}~~j=d.
			\end{cases}
			\]
		\end{itemize}
\end{lemma}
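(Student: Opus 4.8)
The plan is to analyze $\cP(\vW,\bm{0},\varepsilon)$ fiber by fiber over the integer points $\vz\in\{0,1\}^n$. Using the representation from Lemma~\ref{lem:P-epigraph}, for a fixed $\vz$ the feasible continuous vectors form the polyhedron
\[
P_{\vz}:=\left\{\vy\in\R^k:~y_j\geq f_j(\vz)~\forall j\in[k],~\sum_{j\in[k]}y_j\geq g(\vz)\right\},
\]
so that $\cP(\vW,\bm{0},\varepsilon)=\bigcup_{\vz\in\{0,1\}^n}\left(\{\vz\}\times P_{\vz}\right)$, where $f_j,g$ are as in \eqref{submod}. Every $P_{\vz}$ has the same recession cone $\R_+^k$, since all its defining inequalities are of $\geq$-type with the variables on the left-hand side. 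Because $\conv(\cP(\vW,\bm{0},\varepsilon))$ is the convex hull of finitely many polyhedra all sharing the common recession cone $\{\bm{0}\}\times\R_+^k$ (the $\vz$-part is forced to $\bm{0}$ since $\vz$ is bounded), it is itself a polyhedron with recession cone $\{\bm{0}\}\times\R_+^k$. Hence its extreme rays are exactly $(\bm{e^j},\bm{0})$ for $j\in[k]$, settling the claim on rays. It remains to show that the extreme points are precisely the union over $\vz$ of the vertices of the fibers $\{\vz\}\times P_{\vz}$, and to enumerate those vertices.

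First I would enumerate the vertices of $P_{\vz}$ by splitting on whether $\vz\in S(\varepsilon)$, recalling that $g(\vz)=\max\{\varepsilon,\sum_{j\in[k]}f_j(\vz)\}$. If $\vz\in S(\varepsilon)$, then by \eqref{S_eps} we have $\sum_{j\in[k]}f_j(\vz)>\varepsilon$, so $g(\vz)=\sum_{j\in[k]}f_j(\vz)$ and the linking inequality $\sum_{j\in[k]}y_j\geq g(\vz)$ is implied by summing the bounds $y_j\geq f_j(\vz)$; thus $P_{\vz}$ is a translated orthant whose unique vertex is $\bm{y^z}=(f_1(\vz),\ldots,f_k(\vz))$, i.e.\ $A(z)$. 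If $\vz\notin S(\varepsilon)$, then $\sum_{j\in[k]}f_j(\vz)\leq\varepsilon$, so $g(\vz)=\varepsilon$ and the linking inequality is active. A vertex of $P_{\vz}$ requires $k$ linearly independent tight constraints among the $k$ bounds and the linking inequality: taking all $k$ bounds tight forces $\sum_{j\in[k]}y_j=\sum_{j\in[k]}f_j(\vz)\leq\varepsilon$, which is feasible only in the boundary case $\sum_{j\in[k]}f_j(\vz)=\varepsilon$, whereas taking the linking inequality together with the $k-1$ bounds $\{y_j\geq f_j(\vz):j\neq d\}$ tight yields exactly $B(\vz,d)$. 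One checks $y^{z,d}_d=f_d(\vz)+\left(\varepsilon-\sum_{j\in[k]}f_j(\vz)\right)\geq f_d(\vz)$, so $B(\vz,d)\in P_{\vz}$. Hence the vertex set of $P_{\vz}$ is $\{B(\vz,d):d\in[k]\}$, these $k$ points collapsing into the single point $(f_j(\vz))_{j\in[k]}$ exactly when $\sum_{j\in[k]}f_j(\vz)=\varepsilon$.

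Finally I would lift fiber-vertices to global extreme points. Every candidate above has an integral $\vz$-coordinate, while the extreme rays have zero $\vz$-coordinate; hence in any representation of such a point as a convex combination of extreme points of $\conv(\cP(\vW,\bm{0},\varepsilon))$ plus a nonnegative combination of the rays, projecting onto the $\vz$-coordinates writes the integral vector $\vz$ as a convex combination of the integral $\vz$-coordinates of the participating extreme points. Since $\{0,1\}^n$ is the vertex set of $[0,1]^n$, every participating extreme point must carry the same $\vz$, which reduces extremality to a question within the single fiber $P_{\vz}$, where $A(z)$ (respectively each $B(\vz,d)$) is a vertex and therefore not a convex combination of other points of $P_{\vz}$. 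The same reasoning shows no point outside the listed collection can be extreme, as any extreme point lies in some fiber and must be a vertex of it. The main obstacle is the fiber enumeration itself, namely the careful treatment of the linking constraint being redundant versus binding---encoded by the strict inequality defining $S(\varepsilon)$---together with the degenerate boundary case $\sum_{j\in[k]}f_j(\vz)=\varepsilon$ where the points $B(\vz,d)$ coincide; the lifting step is then routine given the $0$-$1$ structure of $\vz$.
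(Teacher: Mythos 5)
Your proposal is correct and follows essentially the same route as the paper's proof: fix the integral $\vz$, observe that constraints~\eqref{bigM''} reduce to $y_j\geq\max_i\{w_{i,j}z_i\}$, and split on whether $\vz\in S(\varepsilon)$ to determine which $k$ constraints are tight, yielding $A(z)$ in the first case and $B(\vz,d)$ in the second. The only difference is that you also spell out the converse inclusion (that each listed point is genuinely extreme, via the recession-cone and $0$-$1$ lifting arguments), which the paper treats as immediate.
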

\begin{proof}
It is clear that $(\bm{e^j},\bm{0})$ for $j\in[k]$ are the extreme rays of $\conv(\cP(\vW,\bm{0},\varepsilon))$. Let $(\bm{\bar y},\bm{\bar z})$ be an extreme point of $\conv(\cP(\vW,\bm{0},\varepsilon))$. Then $\bm{\bar z}\in\{0,1\}^n$, and constraints~\eqref{bigM''} become $\bar y_j\geq  \max\limits_{i\in[n]}\left\{w_{i,j}\bar z_i\right\}$ for $j\in[k]$. If $\bm{\bar z}\in S(\varepsilon)$, then $\sum_{j\in [k]}\max\limits_{i\in[n]}\left\{w_{i,j}\bar z_i\right\}>\varepsilon$, so $(\bm{\bar y},\bm{\bar z})$ automatically satisfies~\eqref{lb''}--\eqref{linking''}. As $(\bm{\bar y},\bm{\bar z})$ is an extreme point, it follows that $\bar y_j=\max\limits_{i\in[n]}\left\{w_{i,j}\bar z_i\right\}$ for $j\in[k]$, and therefore, $(\bm{\bar y},\bm{\bar z})=A(\bm{\bar z})$. If $\bm{\bar z}\notin S(\varepsilon)$, then $\sum_{j\in [k]}\max\limits_{i\in[n]}\left\{w_{i,j}\bar z_i\right\}\leq\varepsilon$. Since $(\bm{\bar y},\bm{\bar z})$ satisfies $\bar y_1+\cdots+\bar y_k\geq\varepsilon$ and $(\bm{\bar y},\bm{\bar z})$ cannot be expressed as a convex combination of two distinct points, it follows that $\bar y_1+\cdots+\bar y_k\geq\varepsilon$ and  constraints $\bar y_j\geq  \max\limits_{i\in[n]}\left\{w_{i,j}\bar z_i\right\},$  $j\in[k]\setminus \{d\}$ are tight at $(\bm{\bar y},\bm{\bar z})$ for some $d\in[k]$, so $(\bm{\bar y},\bm{\bar z})=B(\vz,d)$.
\end{proof}

Based on the definition of $S(\varepsilon)$ and \eqref{submod}, we have
\[
g(\vz)=\max\left\{\varepsilon, \sum_{j\in[k]}f_j(\vz)\right\} = 
\begin{cases}
\sum_{j\in[k]}f_j(\vz),&\text{if }\vz\in S(\varepsilon)\\
\varepsilon,&\text{if }\vz\not\in S(\varepsilon).
\end{cases}
\]
Remember the definition of $\bar I(\varepsilon)$ in~\eqref{indexset} and the conditions for $\bar I(\varepsilon)$ to be $\varepsilon$-negligible. Recall the definition of $L_{\vW}(\varepsilon)$ in~\eqref{L_W} as well. 
Based on these definitions and Proposition~\ref{prop:intersection2}, we are now ready to give the explicit inequality characterization of the convex hull of $\cM(\vW,\bm{0},\varepsilon)$.
\begin{proposition}\label{prop:mixing-polymatroid}
Let $\vW=\{w_{i,j}\}\in\R_+^{n\times k}$ and $\varepsilon\in\R_+$. If $\bar I(\varepsilon)$ is $\varepsilon$-negligible and $\varepsilon\leq L_{\vW}(\varepsilon)$, then the convex hull of $\cM(\vW,\bm{0},\varepsilon)$ is given by
\[
\left\{(\vy,\vz)\in\R^k\times[0,1]^n:~(y_j,\bm{1}-\vz)\in\conv(Q_{f_j}),~\forall j\in[k],~~ \left(y_1+\cdots+y_k,\bm{1}-\vz\right)\in\conv(Q_{g})\right\}.
\]
\end{proposition}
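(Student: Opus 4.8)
The plan is to prove the equivalent statement for $\cP(\vW,\bm 0,\varepsilon)$ and then complement the binary variables through Remark~\ref{rem:M-P-relation}. By Lemma~\ref{lem:P-epigraph}, $\cP(\vW,\bm 0,\varepsilon)$ is exactly a set of the form~\eqref{eq:mult-dep-sub} with $m=k+1$, vectors $\bm{a_j}=\bm{e^j}$ for $j\in[k]$ and $\bm{a_{k+1}}=\bm 1$, and functions $f_1,\dots,f_k,g$ from~\eqref{submod}; here $f_1,\dots,f_k$ are submodular by Corollary~\ref{cor:f-submodular}, and $g$ is submodular precisely because the hypothesis invokes Lemma~\ref{lem:g-submodular}. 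It thus suffices to show $\conv(\cP(\vW,\bm 0,\varepsilon))=\cR$, where $\cR:=\{(\vy,\vz)\in\R^k\times[0,1]^n:(y_j,\vz)\in\conv(Q_{f_j})\ \forall j\in[k],\ (\sum_{j}y_j,\vz)\in\conv(Q_g)\}$, since the asserted description of $\conv(\cM(\vW,\bm 0,\varepsilon))$ is obtained by substituting $\bm 1-\vz$ for $\vz$. The inclusion $\conv(\cP)\subseteq\cR$ is immediate from Theorem~\ref{thm:lovasz}, as the defining constraints of $\cP$ are exactly $(y_j,\vz)\in Q_{f_j}$ and $(\sum_j y_j,\vz)\in Q_g$.

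For the reverse inclusion I would follow the argument of Proposition~\ref{prop:intersection2}, with one essential modification: the family $y_1,\dots,y_k,\sum_j y_j$ is in general \emph{not} weakly independent with respect to $f_1,\dots,f_k,g$ (at $\vz=\bm 0$ the linking constraint forces $f_{\valpha}$ to strictly exceed $\sum_j c_jf_j$ whenever all $c_j>0$), so Proposition~\ref{prop:intersection2} cannot be applied verbatim. The remedy is an explicit evaluation of the support function. Using the extreme-ray description in Lemma~\ref{lem:inner}, any inequality $\valpha^\top\vy+\vbeta^\top\vz\geq\gamma$ valid for $\conv(\cP)$ must satisfy $\valpha\geq\bm 0$; for such $\valpha$, solving the inner linear program over $\vy$ for fixed $\vz$ gives
\[
f_{\valpha}(\vz):=\min\{\valpha^\top\vy:(\vy,\vz)\in\cP\}=\sum_{j\in[k]}(\alpha_j-\alpha_{\min})f_j(\vz)+\alpha_{\min}\,g(\vz),\qquad \alpha_{\min}:=\min_{j\in[k]}\alpha_j,
\]
because it is optimal to set $y_j=f_j(\vz)$ and then raise the cheapest coordinate to meet $\sum_j y_j\geq g(\vz)$. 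This is a nonnegative combination of submodular functions, hence submodular, and it corresponds to the nonnegative representation $\valpha=\sum_{j\in[k]}(\alpha_j-\alpha_{\min})\bm{e^j}+\alpha_{\min}\bm 1$.

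With this ``good'' representation the remainder mirrors the proof of Proposition~\ref{prop:intersection2}. The inequality $\valpha^\top\vy+\vbeta^\top\vz\geq\gamma$ is valid for $\cQ:=\{(\vy,\vz)\in\R^k\times\{0,1\}^n:\valpha^\top\vy\geq f_{\valpha}(\vz)\}$, so by Theorem~\ref{thm:lovasz} it is implied by the polymatroid inequalities of the submodular function $f_{\valpha}$ with left-hand side $\valpha^\top\vy$. For any extreme point $\vpi$ of $EP_{\tilde f_{\valpha}}$, Theorem~\ref{thm:edmonds} produces it from some permutation $\sigma$; running the same greedy on $f_j$ and on $g$ yields extreme points $\vpi^j\in EP_{\tilde f_j}$ and $\vpi^g\in EP_{\tilde g}$ with $\vpi=\sum_{j}(\alpha_j-\alpha_{\min})\vpi^j+\alpha_{\min}\vpi^g$, exactly because $f_{\valpha}$ is that same nonnegative combination. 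For $(\bm{\bar y},\bm{\bar z})\in\cR$, the memberships $(\bar y_j,\bm{\bar z})\in\conv(Q_{f_j})$ and $(\sum_j\bar y_j,\bm{\bar z})\in\conv(Q_g)$ give, via Theorem~\ref{thm:lovasz}, the inequalities $\bar y_j\geq(\vpi^j)^\top\bm{\bar z}+f_j(\emptyset)$ and $\sum_j\bar y_j\geq(\vpi^g)^\top\bm{\bar z}+g(\emptyset)$; taking the same nonnegative combination yields $\valpha^\top\bm{\bar y}\geq\vpi^\top\bm{\bar z}+f_{\valpha}(\emptyset)$, so $(\bm{\bar y},\bm{\bar z})$ satisfies every polymatroid inequality of $f_{\valpha}$ and hence the original inequality, proving $\cR\subseteq\conv(\cP)$. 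The main obstacle is precisely the failure of weak independence: the whole argument hinges on recognizing that the problematic $\valpha$ admit the submodular decomposition above, which re-routes the $\alpha_{\min}$-portion of the cost through $g$ so that Edmonds' greedy still splits consistently across $f_1,\dots,f_k$ and $g$.
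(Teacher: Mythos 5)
Your proof is correct and takes essentially the same route as the paper's: the paper likewise writes $\valpha=\alpha_{\min}\bm{1}+\sum_{j\in[k]}(\alpha_j-\alpha_{\min})\bm{e^j}$, verifies $f_{\valpha}=\alpha_{\min}g+\sum_{j\in[k]}(\alpha_j-\alpha_{\min})f_j$ using the extreme points $A(\vz)$ and $B(\vz,d)$ of Lemma~\ref{lem:inner}, and then invokes Proposition~\ref{prop:intersection2}, whose proof is precisely the greedy-splitting argument you reproduce inline. Your point that weak independence fails under the literal ``every nonnegative representation'' reading of Definition~\ref{def:weakindep} is well taken, but the proof of Proposition~\ref{prop:intersection2} only ever uses the identity $f_{\valpha}=\sum_{j}c_jf_j$ for one nonnegative representation of each $\valpha$, which is exactly what both you and the paper establish, so the two arguments coincide in substance.
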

\begin{proof}
We will show that $y_1,\ldots,y_k$ and $\sum_{j\in[k]}y_j$ are weakly independent with respect to submodular functions $f_1,\ldots,f_k$ and $g$ (recall Definition \ref{def:weakindep}). Consider $\valpha\in\R_+^k\setminus\{\bm{0}\}$, and let $\alpha_{\min}$ denote the smallest coordinate value of $\valpha$. Then $\valpha$ and $\valpha^\top \vy$ can be written as $\valpha=\alpha_{\min} \bm{1}+\sum_{j\in[k]}(\alpha_j-\alpha_{\min})\bm{e^j}$ and $\valpha^\top \vy=\alpha_{\min} \sum_{j\in[k]}y_j+\sum_{j\in[k]}(\alpha_j-\alpha_{\min})y_j$.
Let $f_{\valpha}$ be defined as $f_{\valpha}(\vz):=\min\left\{\valpha^\top \vy:~(\vy,\vz)\in \cP(\vW,\bm{0},\varepsilon)\right\}$ for $\vz\in\{0,1\}^n$. Then, it is sufficient to show that $f_{\valpha}=\alpha_{\min} g+\sum_{j\in[k]}(\alpha_j-\alpha_{\min})f_j$.

Let $\bm{\bar z}\in\{0,1\}^n$. For any $\vy$ with $(\vy,\bm{\bar z})\in \cP(\vW,\bm{0},\varepsilon)$, we have $y_j\geq f_j(\bm{\bar z})$ for $j\in[k]$ and $\sum_{j\in[k]}y_j\geq g(\bm{\bar z})$ by Lemma~\ref{lem:P-epigraph}, implying in turn that 
\begin{equation}\label{f_alpha-lb}
f_{\valpha}(\bm{\bar z})=\min\left\{\valpha^\top \vy:(\vy,\bm{\bar z})\in \cP(\vW,\bm{0},\varepsilon)\right\}\geq \alpha_{\min} g(\bm{\bar z})+\sum_{j\in[k]}(\alpha_j-\alpha_{\min})f_j(\bm{\bar z}).
\end{equation}
Recall the definition of $S(\varepsilon)$ in~\eqref{S_eps}. If $\bm{\bar z}\in S(\varepsilon)$, then $g(\bm{\bar z})=\sum_{j\in[k]}f_j(\bm{\bar z})$, and therefore, $A(\bm{\bar z})=(\bm{y^{\bar z}},\bm{\bar z})$ defined in Lemma~\ref{lem:inner} satisfies~\eqref{f_alpha-lb} at equality. If $\bm{\bar z}\notin S(\varepsilon)$, then $g(\bm{\bar z})=\varepsilon$. Let $d\in[k]$ be the index satisfying $\alpha_d=\alpha_{\min}$. Then $B(\bm{\bar z},d)=(\bm{y^{\bar z,d}},\bm{\bar z})$ defined in Lemma~\ref{lem:inner} satisfies~\eqref{f_alpha-lb} at equality. Therefore, we deduce that $f_{\valpha}=\alpha_{\min} g+\sum_{j\in[k]}(\alpha_j-\alpha_{\min})f_j$.

From Proposition~\ref{prop:intersection2} applied to~\eqref{P-submod}, we obtain that $\conv(\cP(\vW,\bm{0},\varepsilon))$ is equal to
\[
\left\{(\vy,\vz)\in\R^k\times[0,1]^n:~(y_j,\vz)\in\conv(Q_{f_j}),~\forall j\in[k],~~ \left(y_1+\cdots+y_k,\vz\right)\in\conv(Q_{g})\right\}.
\]
After complementing the $\vz$ variables, we obtain the desired description of $\conv(\cM(\vW,\bm{0},\varepsilon))$. 
This finishes the proof.
\end{proof}

Proposition~\ref{prop:mixing-polymatroid} indicates that if $\bar I(\varepsilon)$ is $\varepsilon$-negligible and $\varepsilon\leq L_{\vW}(\varepsilon)$, then the convex hull of $\mathcal{M}(W,\bm{0},\varepsilon)$ is described by the polymatroid inequalities of $f_j$ with left-hand side $y_j$ for $j\in[k]$ and the polymatroid inequalities of $g$ with left-hand side $\sum_{j\in[k]}y_j$. We have seen in Section~\ref{sec:mixing} that the polymatroid inequalities of $f_j$ with left-hand side $y_j$ for $j\in[k]$ are nothing but the mixing inequalities. In fact, it turns out that an extremal polymatroid inequality of $g$ with left-hand side $\sum_{j\in[k]}y_j$ is either the linking constraint  $y_1+\cdots+y_k\geq \varepsilon$ or an aggregated mixing inequality, depending on whether or not $\bar I(\varepsilon)=[n]$. We consider the $\bar I(\varepsilon)=[n]$ case first.

\begin{proposition}\label{prop:coeff2-special}
Assume that $\bar I(\varepsilon)=[n]$ and $\bar I(\varepsilon)$ is $\varepsilon$-negligible. Then for every extreme point $\vpi$ of $EP_{\tilde{g}}$, the corresponding polymatroid inequality $\sum_{j\in[k]}y_j +\sum_{i\in[n]}\pi_i z_i \geq \varepsilon +\sum_{i\in[n]}\pi_i$ is equivalent to the linking constraint.
\end{proposition}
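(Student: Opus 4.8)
The plan is to observe that under the hypotheses the function $g$ collapses to a constant, so that the extended polymatroid $EP_{\tilde{g}}$ degenerates to a single point; the proposition then follows immediately. First I would use $\varepsilon$-negligibility to pin down $g$. Since $\bar I(\varepsilon)=[n]$ is assumed nonempty and $\varepsilon$-negligible, condition~\eqref{condition2} gives $\sum_{j\in[k]}\max_{i\in[n]}\{w_{i,j}\}\leq\varepsilon$. Consequently, for every $\vz\in\{0,1\}^n$ we have $\sum_{j\in[k]}f_j(\vz)=\sum_{j\in[k]}\max_{i\in[n]}\{w_{i,j}z_i\}\leq\sum_{j\in[k]}\max_{i\in[n]}\{w_{i,j}\}\leq\varepsilon$, whence $g(\vz)=\max\{\varepsilon,\sum_{j\in[k]}f_j(\vz)\}=\varepsilon$. (Equivalently, this is Lemma~\ref{lem:negligible} applied with $\bar I(\varepsilon)=[n]$, which yields $g(U)=g(\emptyset)=\varepsilon$ for every $U\subseteq[n]$.) Thus $g$ is the constant function $\varepsilon$, and therefore $\tilde g=g-g(\emptyset)\equiv 0$.

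Next I would identify the extreme points of $EP_{\tilde{g}}$. By Theorem~\ref{thm:edmonds}, any extreme point $\vpi$ arises from some permutation $\sigma$ of $[n]$ via $\pi_{\sigma(t)}=\tilde g(V_t)-\tilde g(V_{t-1})$, where $V_t=\{\sigma(1),\ldots,\sigma(t)\}$ and $V_0=\emptyset$. Since $\tilde g\equiv 0$, each such difference vanishes, so $\pi_{\sigma(t)}=0$ for all $t\in[n]$, independently of the choice of $\sigma$. Hence $EP_{\tilde{g}}$ has a unique extreme point, namely $\vpi=\bm{0}$.

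Finally I would substitute $\vpi=\bm{0}$ into the (complemented) polymatroid inequality associated with $g$. For $g$ with left-hand side $\sum_{j\in[k]}y_j$, complementing the binaries $\vz\mapsto\bm{1}-\vz$ turns the polymatroid inequality of Theorem~\ref{thm:lovasz} into $\sum_{j\in[k]}y_j+\sum_{i\in[n]}\pi_i z_i\geq g(\emptyset)+\sum_{i\in[n]}\pi_i$, precisely the form appearing in the statement. With $\pi_i=0$ for all $i$ and $g(\emptyset)=\varepsilon$, this reduces to $y_1+\cdots+y_k\geq\varepsilon$, i.e., the linking constraint. Because $\bm{0}$ is the only extreme point of $EP_{\tilde{g}}$, the claimed equivalence holds for every extreme point $\vpi$. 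There is essentially no obstacle here: the sole substantive step is recognizing that $\varepsilon$-negligibility with $\bar I(\varepsilon)=[n]$ forces $g$ to be constant, after which Theorem~\ref{thm:edmonds} trivially yields the single extreme point; the only minor care needed is to track the complementation of $\vz$ so that the constant term matches $\varepsilon+\sum_{i\in[n]}\pi_i$ as written.
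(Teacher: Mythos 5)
Your proposal is correct and follows essentially the same route as the paper: use $\varepsilon$-negligibility with $\bar I(\varepsilon)=[n]$ to conclude $g\equiv\varepsilon$, apply Theorem~\ref{thm:edmonds} to see that every extreme point of $EP_{\tilde g}$ is $\bm{0}$, and substitute to recover the linking constraint. The only difference is that you spell out the derivation of $g\equiv\varepsilon$ from condition~\eqref{condition2} (equivalently Lemma~\ref{lem:negligible}), which the paper states more tersely.
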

\begin{proof}
By Theorem~\ref{thm:edmonds}, there exists a permutation $\sigma$ of~$[n]$ such that $\pi_{\sigma(t)}=g(V_{t})-g(V_{t-1})$ where $V_{t}=\{\sigma(1),\ldots,\sigma(t)\}$ for $t\in[n]$ and $V_{0}=\emptyset$. Since $\bar I(\varepsilon)=[n]$ and $\bar I(\varepsilon)$ is $\varepsilon$-negligible, it follows that $g(U)=\varepsilon$ for every $U\subseteq[n]$, so $\pi_{\sigma(t)}=0$ for all $t$. Therefore, $\sum_{j\in[k]}y_j +\sum_{i\in[n]}\pi_i z_i \geq \varepsilon +\sum_{i\in[n]}\pi_i$ equals $\sum_{j\in[k]}y_j \geq \varepsilon$, as required.
\end{proof}

The $\bar I(\varepsilon)\neq[n]$ case is more interesting; the following proposition is similar to Proposition~\ref{prop:coeff1}:

\begin{proposition}\label{prop:coeff2}
Assume that $\bar I(\varepsilon)\neq [n]$ is $\varepsilon$-negligible and $\varepsilon\leq L_{\vW}(\varepsilon)$. Then for every extreme point $\vpi$ of $EP_{\tilde{g}}$, there exists a sequence $\Theta=\left\{i_1\to\cdots\to i_\theta\right\}$ contained in $[n]\setminus \bar I(\varepsilon)$ that satisfies the following:
\begin{enumerate}[{\bf (1)}]
	\item the $j$-mixing-subsequence $\{j_1\to\cdots\to j_{\tau_j}\}$ of $\Theta$ satisfies $w_{j_1,j}=\max\left\{w_{i,j}:i\in[n]\right\}$ for each $j\in[k]$,
	\item the corresponding polymatroid inequality $\sum_{j\in[k]}y_j +\sum_{i\in[n]}\pi_i z_i \geq \varepsilon +\sum_{i\in[n]}\pi_i$ is equivalent to the aggregated mixing inequality~\eqref{aggregate} derived from $\Theta$.
\end{enumerate}
In particular, the polymatroid inequality is of the form
\begin{equation}\label{extremal-aggregate}
	\sum_{j\in[k]}\left(y_j+\sum_{s\in[\tau_j]} (w_{j_s,j}-w_{j_{s+1},j})z_{j_s}\right) - \varepsilon z_{i_\theta}\geq \sum_{j\in[k]} \max\left\{w_{i,j}:i\in[n]\right\}\tag{$\text{A-Mix}^*$}.
\end{equation}
\end{proposition}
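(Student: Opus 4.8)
The plan is to mirror the argument of Proposition~\ref{prop:coeff1}, but applied to the submodular function $g$ rather than to $f_j$, while tracking carefully where the threshold $\varepsilon$ enters. First I would invoke Theorem~\ref{thm:edmonds} to fix a permutation $\sigma$ of $[n]$ with $\pi_{\sigma(t)}=g(V_t)-g(V_{t-1})$, where $V_t=\{\sigma(1),\ldots,\sigma(t)\}$ and $V_0=\emptyset$; since $g(\emptyset)=\varepsilon$, after complementing $\vz$ the polymatroid inequality reads $\sum_{j\in[k]}y_j+\sum_{i\in[n]}\pi_iz_i\geq\varepsilon+\sum_{i\in[n]}\pi_i$. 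Writing $G_t:=\sum_{j\in[k]}f_j(V_t)$, so that $g(V_t)=\max\{\varepsilon,G_t\}$, the increments $\pi_{\sigma(t)}$ are nonzero only where $G_t$ both increases and exceeds $\varepsilon$. In particular, Lemma~\ref{lem:negligible} gives $\pi_{\sigma(t)}=g(V_t\setminus\bar I(\varepsilon))-g(V_{t-1}\setminus\bar I(\varepsilon))=0$ whenever $\sigma(t)\in\bar I(\varepsilon)$.

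The key structural step, and the one I expect to be the main obstacle, is to locate the ``crossing'' index $t^\ast$, defined as the first $t$ with $G_t>\varepsilon$. Using that every $i\notin\bar I(\varepsilon)$ satisfies $\sum_{j\in[k]}w_{i,j}>\varepsilon$, while $g(U)=\varepsilon$ for $U\subseteq\bar I(\varepsilon)$, I would argue that $t^\ast$ is exactly the position of the first element of $[n]\setminus\bar I(\varepsilon)$ in $\sigma$. Condition~\eqref{condition1} then forces the crossing element $\sigma(t^\ast)$ to attain the running maximum in every coordinate, i.e. $f_j(V_{t^\ast})=w_{\sigma(t^\ast),j}$ for all $j\in[k]$; this is precisely what makes the matching work. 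From here I can read off the increments: $\pi_{\sigma(t)}=0$ for $t<t^\ast$ and whenever $\sigma(t)\in\bar I(\varepsilon)$; $\pi_{\sigma(t^\ast)}=\sum_{j\in[k]}w_{\sigma(t^\ast),j}-\varepsilon$; and $\pi_{\sigma(t)}=\sum_{j\in[k]}\bigl(w_{\sigma(t),j}-\max_{i<t}w_{\sigma(i),j}\bigr)_+$ for $t>t^\ast$, where the inner maximum is unaffected by the indices preceding $t^\ast$.

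Next I would build the sequence $\Theta$. Let $\sigma(s_1),\ldots,\sigma(s_\theta)$, with $t^\ast=s_1<\cdots<s_\theta$, be the indices carrying a strictly positive increment, and define $\Theta=\{i_1\to\cdots\to i_\theta\}$ to be this list in reverse order, so that $i_\theta=\sigma(t^\ast)$; by the previous paragraph $\Theta$ is a nonempty sequence contained in $[n]\setminus\bar I(\varepsilon)$. A short induction on $t\geq t^\ast$ shows that the running maximum $\max_{i<t}w_{\sigma(i),j}$ is always attained by one of the active elements, hence equals $\max\{w_{\sigma(s_l),j}:s_l<t\}$. Because $\Theta$ reverses the order of $\sigma$, for $i_r=\sigma(s_m)$ this quantity is exactly $\max\{w_{i,j}:i_r\text{ precedes }i\text{ in }\Theta\}$, so the increment formula matches the expanded aggregated-mixing coefficients of~\eqref{coeff-transf}: $\pi_{i_r}$ equals the coefficient of $z_{i_r}$ for $r<\theta$, while $\pi_{i_\theta}=\sum_{j\in[k]}w_{i_\theta,j}-\varepsilon$ is exactly the coefficient appearing on $z_{i_\theta}$ in~\eqref{extremal-aggregate}.

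Finally I would verify the two claims and reconcile the right-hand sides. Telescoping gives $\sum_{i\in[n]}\pi_i=g([n])-\varepsilon=\sum_{j\in[k]}\max_{i\in[n]}w_{i,j}-\varepsilon$, where $g([n])=\sum_{j\in[k]}f_j([n])$ since $\bar I(\varepsilon)\neq[n]$; hence $\varepsilon+\sum_{i\in[n]}\pi_i=\sum_{j\in[k]}\max_{i\in[n]}w_{i,j}$, matching the right-hand side of~\eqref{extremal-aggregate}. The same induction yields $\max\{w_{i,j}:i\in\Theta\}=\max_{i\in[n]}w_{i,j}$, and since the first element $j_1$ of the $j$-mixing-subsequence attains $\max\{w_{i,j}:i\in\Theta\}$ by Lemma~\ref{lem:j-sequence}, claim~(1) follows. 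Lastly, Lemma~\ref{lem:L_Wbound} together with the hypothesis $\varepsilon\leq L_{\vW}(\varepsilon)$ gives $L_{\vW,\Theta}\geq L_{\vW}(\varepsilon)\geq\varepsilon$, so $\min\{\varepsilon,L_{\vW,\Theta}\}=\varepsilon$ and the aggregated mixing inequality~\eqref{aggregate} derived from $\Theta$ specializes to~\eqref{extremal-aggregate}. Combined with the coefficient and right-hand-side matchings, this establishes claim~(2) and completes the proof.
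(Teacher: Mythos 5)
Your proposal is correct and follows essentially the same route as the paper's proof: both apply Theorem~\ref{thm:edmonds} to express the increments $\pi_{\sigma(t)}=g(V_t)-g(V_{t-1})$, use Lemma~\ref{lem:negligible} to kill the coordinates in $\bar I(\varepsilon)$, reverse the order of the nonzero-increment positions to form $\Theta$, match the increments to the coefficients in~\eqref{coeff-transf} (with the last element absorbing the $-\varepsilon$), and invoke Lemma~\ref{lem:L_Wbound} with $\varepsilon\leq L_{\vW}(\varepsilon)$ to get $\min\{\varepsilon,L_{\vW,\Theta}\}=\varepsilon$. Your explicit identification of the crossing index $t^\ast$ via condition~\eqref{condition1} is a slightly more detailed rendering of a step the paper treats tersely, but it is not a different argument.
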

\begin{proof}
By Theorem~\ref{thm:edmonds}, there exists a permutation $\sigma$ of~$[n]$ such that $\pi_{\sigma(t)}=g(V_{t})-g(V_{t-1})$ where $V_{t}=\{\sigma(1),\ldots,\sigma(t)\}$ for $t\in[n]$ and $V_{0}=\emptyset$. By Lemma~\ref{lem:negligible}, $g(V_{t})-g(V_{t-1})=g(V_{t}\setminus \bar I(\varepsilon))-g(V_{t-1}\setminus \bar I(\varepsilon))$, so $\pi_{\sigma(t)}$ is nonzero only if $\sigma(t)\not\in \bar I(\varepsilon)$. This in turn implies that at most $|n\setminus \bar I(\varepsilon)|$ coordinates of $\vpi$ are nonzero. Let $\left\{t_1,\ldots,t_\theta\right\}$ be the collection of $t$'s such that $\pi_{\sigma(t)}\neq 0$. Then $1\leq \theta\leq |n\setminus \bar I(\varepsilon)|$. Without loss of generality, we may assume that $t_1>\cdots>t_\theta$. Let $i_1=\sigma(t_1),i_2=\sigma(t_2),\ldots,i_\theta=\sigma(t_\theta)$, and $\Theta$ denote the sequence $\left\{i_1\to\cdots\to i_\theta\right\}$. We will show that $\Theta$ satisfies conditions  {\bf (1)} and {\bf (2)} of the proposition.

{\bf (1):} For $j\in[k]$, let $\Theta_j=\left\{j_1\to\cdots\to j_{\tau_j}\right\}$ denote the $j$-mixing-subsequence of $\Theta$. By definition of the $j$-mixing-subsequence of~$\Theta$, we have $w_{j_1,j}=\max\{w_{i,j}:i\in\Theta\}$. By our choice of $\left\{t_1,\ldots,t_\theta\right\}$ and assumption that $t_1>\cdots>t_\theta$, it follows that $g(V_{t_1})=g([n])$, which means that $f_j(V_{t_1})=f_j([n])$ for each $j\in[k]$. Therefore, we deduce that $\max\{w_{i,j}:i\in\Theta\}=\max\{w_{i,j}:i\in[n]\}$, as required.

{\bf (2):} By convention, we have $w_{i_{\theta+1},j}=w_{j_{\tau_{j}+1},j}=0$ for $j\in[k]$. In addition, due to our choice of $\left\{t_1,\ldots,t_\theta\right\}$, we have  $g(V_{t_{s}})>g(V_{t_s-1})=\cdots=g(V_{t_{s+1}})$ holds for $s<\theta$. Then, we obtain 
\begin{align*}
\pi_{i_s}=\pi_{\sigma(t_s)}=g(V_{t_s})-g(V_{t_{s+1}})&=\sum_{j\in[k]}f_j(V_{t_s})-\sum_{j\in[k]}f_j(V_{t_{s+1}})\\
&=\sum_{j\in[k]}f_j(\left\{i_\theta,i_{\theta-1},\ldots,i_{s}\right\})-\sum_{j\in[k]}f_j(\left\{i_\theta,i_{\theta-1},\ldots,i_{s+1}\right\}) .
\end{align*}
We observed before that $g(V_{t_{s}})>g(V_{t_s-1})=\cdots=g(V_{t_{s+1}})$, so it follows that $f_j(V_{t_{s}})\geq f_j(V_{t_s-1})=\cdots=f_j(V_{t_{s+1}})$, implying in turn that
\[
f_j(\left\{i_\theta,i_{\theta-1},\ldots,i_{s}\right\})-f_j(\left\{i_\theta,i_{\theta-1},\ldots,i_{s+1}\right\})=\left(w_{i_s,j}   -  \max\left\{w_{i,j}:~\text{$i_s$ precedes $i$   in $\Theta$}\right\}   \right)_+.
\]
This means that for $s<\theta$,
\begin{equation}\label{i_s}
\pi_{i_s}=\sum_{j\in[k]}\left(w_{i_s,j}   -  \max\left\{w_{i,j}:~\text{$i_s$ precedes $i$   in $\Theta$}\right\}   \right)_+ .
\end{equation}
Note that
\[
\pi_{i_\theta}=\pi_{\sigma(t_\theta)}=g(V_{t_\theta})-g(V_0)=\sum_{j\in[k]}f_j(V_{t_\theta})-\varepsilon=\sum_{j\in[k]}f_j(\{i_\theta\})-\varepsilon.
\]
Since $f_j(\left\{i_\theta\right\})=w_{i_\theta, j}$ and $\max\left\{w_{i,j}:~\text{$i_\theta$ precedes $i$  in $\Theta$}\right\}$ was set to $w_{j_{\tau_j+1}j}=0$, it follows that
\begin{equation}\label{i_theta}
\pi_{i_\theta}=\sum_{j\in[k]}\left(w_{i_\theta, j}   -  \max\left\{w_{i,j}:~\text{$i_\theta$ precedes $i$  in $\Theta$}\right\}   \right)_+-\varepsilon.
\end{equation}
Therefore, by~\eqref{i_s} and~\eqref{i_theta}, it follows that the polymatroid inequality $\sum_{j\in[k]}y_j +\sum_{i\in[n]}\pi_i z_i \geq \varepsilon +\sum_{i\in[n]}\pi_i$ is precisely~\eqref{extremal-aggregate}. Since $\varepsilon\leq L_{\vW}(\varepsilon)$ by our assumption and $L_{\vW}(\varepsilon)\leq L_{\vW,\Theta}$ by Lemma~\ref{lem:L_Wbound}, $\min\{\varepsilon,~L_{\vW,\Theta}\}=\varepsilon$, and thus the inequality~\eqref{extremal-aggregate} is identical to the aggregated mixing inequality~\eqref{aggregate} derived from $\Theta$, as required.
\end{proof}

\subsection{Necessary conditions for obtaining the convex hull by the mixing and the aggregated mixing inequalities}\label{sec:char}

Let us get back to our original question as to when the convex hull of a joint mixing set with a linking constraint can be completely described by the mixing inequalities and the aggregated mixing inequalities.

By Propositions~\ref{prop:mixing-polymatroid},~\ref{prop:coeff2-special}, and~\ref{prop:coeff2}, if $\bar I(\varepsilon)$ is $\varepsilon$-negligible and $\varepsilon\leq L_{\vW}(\varepsilon)$, then the convex hull of $\cM(\vW,\bm{0},\varepsilon)$ can be described by the mixing and the aggregated mixing inequalities together with the linking constraint $y_1+\cdots+y_k\geq \varepsilon$ and the bounds $\bm{0}\leq \vz\leq\bm{1}$. Another implication of these is that if $\bar I(\varepsilon)$ is $\varepsilon$-negligible and $\varepsilon\leq L_{\vW}(\varepsilon)$, then the aggregated mixing inequalities other than the ones of the form~\eqref{extremal-aggregate} are not necessary.

It turns out that $\bar I(\varepsilon)$ being $\varepsilon$-negligible and $\varepsilon\leq L_{\vW}(\varepsilon)$ are necessary conditions for the mixing and the aggregated mixing inequalities to describe completely the convex hull of $\cM(\vW,\bm{0},\varepsilon)$. Before establishing this result, let us consider examples where either one of these two condition is violated: either $\bar I(\varepsilon)$ is not $\varepsilon$-negligible or $\varepsilon>L_{\vW}(\varepsilon)$.

\begin{example}\label{example3}
	Let us consider Example~\ref{example1} with a slight modification. The following set is the same as~\eqref{eq:example1} except that $w_{4,2}$ is now 3.
	\begin{equation}\label{eq:example3}
	\left\{(\vy,\vz)\in\R_+^2\times\{0,1\}^5:~
	\begin{array}{l}
	y_1+ 8z_1\geq 8,\\
	y_1 + 6z_2\geq 6,\\
	y_1 + 13z_3\geq 13,\\
	y_1 + z_4\geq 1,\\
	y_1 + 4z_5\geq 4,
	\end{array}
	\begin{array}{l}
	y_2 + 3z_1 \geq 3,\\
	y_2 + 4z_2 \geq 4,\\
	y_2 + 2z_3 \geq 2,\\
	\bm{y_2 + 3z_4 \geq 3},\\
	y_2 + z_5 \geq 1,
	\end{array}
	~~y_1+y_2\geq 7\right\}.
	\end{equation}
	In this example, $\bar I(\varepsilon)$ is still $\{4,5\}$. But, $\bar I(\varepsilon)$ is no longer $\varepsilon$-negligible because $3\notin \bar I(\varepsilon)$ yet  $w_{4,2}> w_{3,2}$ implying that condition~\eqref{condition1} is violated. The following set is the same as~\eqref{eq:example1} except that $w_{5,1}$ is now 6.
	\begin{equation}\label{eq:example4}
	\left\{(\vy,\vz)\in\R_+^2\times\{0,1\}^5:~
	\begin{array}{l}
	y_1+ 8z_1\geq 8,\\
	y_1 + 6z_2\geq 6,\\
	y_1 + 13z_3\geq 13,\\
	y_1 + z_4\geq 1,\\
	\bm{y_1 + 6z_5\geq 6},
	\end{array}
	\begin{array}{l}
	y_2 + 3z_1 \geq 3,\\
	y_2 + 4z_2 \geq 4,\\
	y_2 + 2z_3 \geq 2,\\
	y_2 + 2z_4 \geq 2,\\
	y_2 + z_5 \geq 1,
	\end{array}
	~~y_1+y_2\geq 7\right\}.
	\end{equation}
	Again, $\bar I(\varepsilon)$ is  $\{4,5\}$. However, $\bar I(\varepsilon)$ is not $\varepsilon$-negligible because $\sum_{j\in[k]}\max\limits_{i\in\bar I(\varepsilon)}\left\{w_{i,j}\right\}=6+2>\varepsilon$ implying that condition~\eqref{condition2} is violated. Using PORTA~\cite{porta}, one can check that there are facet-defining inequalities other than the mixing and the aggregated mixing inequalities in both of these examples. For instance, $2y_1+3y_2+3z_2+18z_3+3z_4 \geq 38$ is facet-defining for the convex hull of~\eqref{eq:example3} and $2y_1+y_2+z_1+z_2+14z_3+z_4+6z_5 \geq 30$ is facet-defining for the convex hull of~\eqref{eq:example4}.
\end{example}

\begin{example}
In Example~\ref{example2}, $\bar I(\varepsilon)$ is $\varepsilon$-negligible but $\varepsilon>L_{\vW}(\varepsilon)$ (see Examples~\ref{ex1,2-negligible} and~\ref{ex1,2-parameters}). Recall that the convex hull of~\eqref{eq:example2} has a facet-defining inequality, e.g., $7y_1+6y_2+12z_2+49z_3\geq 115$, that is neither a mixing inequality nor an aggregated mixing inequality.
\end{example}

These examples already demonstrate that the mixing and the aggregated mixing inequalities are not sufficient whenever the $\varepsilon$-negligibility condition or the condition $\varepsilon\leq L_{\vW}(\varepsilon)$ does not hold. This is formalized by the following theorem.

\begin{theorem}\label{thm:main}
Let $\vW=\{w_{i,j}\}\in\R_+^{n\times k}$ and $\varepsilon\geq0$. Let $\bar I(\varepsilon)$ and $L_{\vW}(\varepsilon)$ be defined as in~\eqref{indexset} and~\eqref{L_W}, respectively. Then the following statements are equivalent:
\begin{enumerate}[(i)]
	\item  $\bar I(\varepsilon)$ is $\varepsilon$-negligible and $\varepsilon\leq L_{\vW}(\varepsilon)$,
	\item the convex hull of $\cM(\vW,\bm{0},\varepsilon)$ can be described by the mixing inequalities~\eqref{mixing} and the aggregated mixing inequalities~\eqref{aggregate} together with the linking constraint $y_1+\cdots+y_k\geq \varepsilon$ and the bounds $\bm{0}\leq \vz\leq\bm{1}$, and
	\item the convex hull of $\cM(\vW,\bm{0},\varepsilon)$ can be described by the mixing inequalities of the form~\eqref{mixing'} and the aggregated mixing inequalities of the form~\eqref{extremal-aggregate} together with the linking constraint $y_1+\cdots+y_k\geq \varepsilon$ and the bounds $\bm{0}\leq \vz\leq\bm{1}$.
\end{enumerate}
\end{theorem}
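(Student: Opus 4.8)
The plan is to establish the cycle (i) $\Rightarrow$ (iii) $\Rightarrow$ (ii) $\Rightarrow$ (i). The first implication is essentially already assembled from the earlier results: if $\bar I(\varepsilon)$ is $\varepsilon$-negligible and $\varepsilon \le L_{\vW}(\varepsilon)$, then Lemma~\ref{lem:g-submodular} makes $g$ submodular, so Proposition~\ref{prop:mixing-polymatroid} describes $\conv(\cM(\vW,\bm{0},\varepsilon))$ through the polymatroid inequalities of $f_1,\dots,f_k$ and $g$ after complementing $\vz$. Proposition~\ref{prop:coeff1} identifies the extremal polymatroid inequalities of each $f_j$ with the mixing inequalities~\eqref{mixing'}, while Propositions~\ref{prop:coeff2-special} and~\ref{prop:coeff2} identify those of $g$ with either the linking constraint (when $\bar I(\varepsilon)=[n]$) or the aggregated mixing inequalities~\eqref{extremal-aggregate} (when $\bar I(\varepsilon)\neq[n]$), together with $\bm{0}\le\vz\le\bm{1}$. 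This is exactly statement~(iii).

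The implication (iii) $\Rightarrow$ (ii) is immediate and needs no new work: the families~\eqref{mixing'} and~\eqref{extremal-aggregate} are sub-families of~\eqref{mixing} and~\eqref{aggregate}, and every member of the larger families is valid for $\cM(\vW,\bm{0},\varepsilon)$ by Theorems~\ref{thm:mixing-lb} and~\ref{thm:pm-aggregate}. Hence the polyhedron in~(ii) contains $\conv(\cM(\vW,\bm{0},\varepsilon))$ by validity and is contained in the polyhedron in~(iii) by having more constraints; since the latter equals the convex hull, so does the former.

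The crux is (ii) $\Rightarrow$ (i), which I would prove by contraposition. By Lemma~\ref{lem:g-submodular}, the negation of~(i) is exactly the failure of submodularity of $g$, and the proofs of Lemmas~\ref{g-submod-negligible} and~\ref{g-submod-LW} already produce explicit witnesses: a pair $\{p,q\}$ with $g(\{p\})+g(\{q\})<g(\emptyset)+g(\{p,q\})$ (when~\eqref{condition1} is violated or when $\varepsilon>L_{\vW}(\varepsilon)$), or a minimal $I\subseteq\bar I(\varepsilon)$ with $g(I)>\varepsilon$ split into two halves (when~\eqref{condition2} is violated). Working in the complemented set $\cP(\vW,\bm{0},\varepsilon)$ via Remark~\ref{rem:M-P-relation}, I would build a point $(\bm{\bar y},\bm{\bar z})$ whose fractional $\vz$-support is concentrated on the witness set, setting each $\bar y_j$ to the lower convex envelope value of $f_j$ at $\bm{\bar z}$ so that all mixing inequalities hold with the smallest admissible $y_j$ (Theorem~\ref{thm:lovasz}), and raising $\sum_j y_j$ only as far as the linking constraint forces. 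The key device for controlling the aggregated mixing inequalities is Lemma~\ref{lem:aggregate-lemma}: after complementation the set $\{i:z_i<1\}$ coincides with the witness set, so it suffices to verify~\eqref{aggregate} for the few sequences supported there, which for a two-element witness is just $\{p\},\{q\},\{p\to q\},\{q\to p\}$, checked against $L_{\vW,\{p\to q\}}=\sum_j\min\{w_{p,j},w_{q,j}\}$ and the coefficient translation~\eqref{coeff-transf}. To certify $(\bm{\bar y},\bm{\bar z})\notin\conv(\cM(\vW,\bm{0},\varepsilon))$, I would use a valid polymatroid-type inequality: as the computation in the proof of Proposition~\ref{prop:mixing-polymatroid} shows, for every $\valpha\in\R_+^k$ the function $f_{\valpha}=\alpha_{\min}g+\sum_j(\alpha_j-\alpha_{\min})f_j$ is the true lower bound on $\valpha^\top\vy$, so $(\valpha^\top\vy,\vz)\in\conv(Q_{f_{\valpha}})$ is valid for the convex hull; choosing $\valpha$ so that $f_{\valpha}$ inherits the submodularity gap (for instance $\valpha=\bm{1}$, giving $f_{\valpha}=g$) yields an inequality whose envelope at $\bm{\bar z}$ strictly exceeds $\valpha^\top\bm{\bar y}$, separating the point.

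The main obstacle is making this construction uniform across the three failure modes and guaranteeing a genuine violation. The subtlety is that the naive midpoint $\bar z_p=\bar z_q=1/2$ does not always expose the gap: when $\sum_j\min\{w_{p,j},w_{q,j}\}=0$, or when the witness lies inside $\bar I(\varepsilon)$, the plain $g$-envelope collapses onto the linking bound, so one must either scale the fractional value strictly below $1/2$ so that the linking constraint binds strictly inside the envelope, or pass to a weighted vector $\valpha$ with a non-unit coordinate so that $f_{\valpha}$ rather than $g$ supplies the violated inequality, as the facets with non-uniform $y$-coefficients in Examples~\ref{example2} and~\ref{example3} suggest. Verifying that the chosen point clears every aggregated mixing inequality on its support while provably falling below the envelope of the chosen $f_{\valpha}$ is the technical heart of the argument; Lemmas~\ref{lem:negligible} and~\ref{lem:inner} handle the bookkeeping that the integral coordinates forced to their complemented values neither trigger spurious constraints nor return the point to the hull.
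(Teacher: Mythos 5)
Your treatment of \textbf{(i)$\Rightarrow$(iii)} and \textbf{(iii)$\Rightarrow$(ii)} is correct and coincides with the paper's (the former is exactly the assembly of Propositions~\ref{prop:mixing-polymatroid},~\ref{prop:coeff2-special} and~\ref{prop:coeff2}; the latter is the sandwiching argument the paper treats as trivial). The skeleton of \textbf{(ii)$\Rightarrow$(i)} --- contraposition, a fractional witness point supported on the set where submodularity of $g$ fails, and Lemma~\ref{lem:aggregate-lemma} to reduce the verification of~\eqref{aggregate} to sequences on that support --- also matches the paper. But the heart of that direction is left with two genuine gaps.

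First, your proposed certificate of non-membership does not work. The constraint $(\valpha^\top\vy,\vz)\in\conv(Q_{f_{\valpha}})$ is indeed valid, but for $\valpha=\bm{1}$ it fails to cut off the natural witness: in the case $\varepsilon>L_{\vW}(\varepsilon)$ the paper's point has, in the complemented space, $\bm{\bar z}=\tfrac12\bm{1}_{\{p\}}+\tfrac12\bm{1}_{\{q\}}$ and $\sum_j\bar y_j=\tfrac12 g(\{p\})+\tfrac12 g(\{q\})$, which is precisely the midpoint of $(g(\{p\}),\bm{1}_{\{p\}})$ and $(g(\{q\}),\bm{1}_{\{q\}})$ and hence lies \emph{inside} $\conv(Q_g)$. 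You flag this ("the naive midpoint does not always expose the gap") and gesture at rescaling or a non-unit $\valpha$, but you never produce a working inequality, and doing so would require knowing the convex envelope of a non-submodular $f_{\valpha}$, which is exactly what is unavailable. The paper sidesteps single-inequality separation entirely: it observes that the witness satisfies certain mixing inequalities and the bounds $z_i\leq 1$, $i\notin U$, at equality, so that the only binary $\vz$-vectors available in any convex-combination representation are $\bm{1}$ and $\bm{1}_{[n]\setminus U}$ (resp.\ $\bm{1}_{[n]\setminus\{p,q\}}$), and then a direct accounting of $\sum_j y_j$ over feasible completions of those two points yields the contradiction. This face argument is the missing ingredient.

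Second, your prescription for the $y$-part --- "raising $\sum_j y_j$ only as far as the linking constraint forces" --- produces the wrong point in the case $\varepsilon>L_{\vW}(\varepsilon)$. There one must set $\sum_j\bar y_j=\tfrac12\sum_j\max\{w_{p,j},w_{q,j}\}+\tfrac12 L_{\vW}(\varepsilon)$, which strictly exceeds $\varepsilon$; setting it to $\varepsilon$ would violate the aggregated mixing inequality from $\{p\to q\}$ (whose right-hand side on this face is $\tfrac12 L_{\vW,\{p\to q\}}+\tfrac12\sum_j\max\{w_{p,j},w_{q,j}\}>\varepsilon$), so the point would not witness insufficiency of the inequality families. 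The whole case analysis turns on calibrating $\sum_j\bar y_j$ to sit exactly between the aggregated-mixing bound $\min\{\varepsilon,L_{\vW,\Theta}\}$ and the true convex-hull requirement $\varepsilon$ on that face, and this calibration, together with the separate constructions for the violations of~\eqref{condition1},~\eqref{condition2} and of $\varepsilon\leq L_{\vW}(\varepsilon)$, is absent from your proposal.
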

The proof of this theorem is given in Appendix~\ref{sec:thm}. Direction {\bf (i)$\Rightarrow$(iii)} is already proved by Propositions~\ref{prop:mixing-polymatroid},~\ref{prop:coeff2-special} and~\ref{prop:coeff2}, and {\bf (iii)$\Rightarrow$(ii)} is trivial. Hence, the main effort in this proof is to establish that {\bf (ii)$\Rightarrow$(i)} holds.

\section{Two-sided chance-constrained programs}\label{sec:two-sided}

A two-sided chance-constrained program has the following form:
\begin{subequations} \label{eq:two-sided-ccp}
	\begin{align}
	\min_{\vx\in \cX}\quad& \vh^\top \vx\label{two-sided-ccp-obj}\\
	\st\quad& \P\left[\ \left|\va^\top \vx-b(\vxi)\right|\leq\vc^\top\vx -d(\vxi)\right]\geq 1-\epsilon,\label{two-sided-ccp-cons}
	\end{align}
\end{subequations}
where $\cX\subseteq \R^m$ is a domain for the decision variables $\vx$, $\epsilon\in(0,1)$ is a risk level, $\va,\vc\in\R^m$ are deterministic coefficient vectors, and $b(\vxi),d(\vxi)\in\R$ are random parameters that depend on the random variable $\vxi\in \bm{\Xi}$; see  \citet{liu2018intersection} for further details on two-sided CCPs. Note that \eqref{eq:two-sided-ccp} is indeed a special case of joint CCPs with random right-hand vector because the nonlinear constraint in~\eqref{two-sided-ccp-cons} is equivalent to the following system of two linear inequalities:
\begin{align}
(\vc +\va)^\top \vx &\geq d(\vxi)+b(\vxi),\tag{28b'}\label{two-sided-ccp-cons'}\\
(\vc -\va)^\top \vx &\geq d(\vxi)-b(\vxi).\tag{28b''}\label{two-sided-ccp-cons''}
\end{align}
Hence, just like other joint CCPs we have studied in this paper, the two-sided CCP can be reformulated as a mixed-integer linear program. Note also that in the resulting MILP formulation, each inequality~\eqref{two-sided-ccp-cons'} and~\eqref{two-sided-ccp-cons''} individually will  lead to a mixing set, and consequently the resulting MILP reformulation will have a substructure containing the intersection of two mixing sets where the continuous variables of these mixing sets are correlated. %
Recall that the convex hull of the intersection of two mixing sets, as long as they do not share continuous variables, can be completely described by the mixing inequalities. However, as we observed in Section~\ref{sec:aggregated} the mixing inequalities are not sufficient when additional constraints linking the continuous variables are present. We have thus far considered constraints on the continuous variables that correspond to quantile cuts. On the other hand, \citet{liu2018intersection} focus on additional bound constraints on the continuous variables that can be easily justified when for example the original decision variables $\vx$ are bounded. In particular, they use bounds on $\vc^\top\vx$ and $\va^\top\vx$. To simplify our discussion
\footnote{Arbitrary bounds on~$\vc^\top\vx$ and $\va^\top\vx$ can be also dealt with by taking appropriate linear transformations (see Section 1.1 of~\cite{liu2018intersection}).}, 
let us assume that
\begin{equation}\label{two-sided:bounds}
\vc^\top\vx\geq 0,\quad u_{\va}\geq \va^\top\vx\geq 0,\quad \forall \vx\in\cX.
\end{equation}
In order to point out the intersection of two mixing sets connection and also to explain how to use~\eqref{two-sided:bounds} to strengthen this intersection, we follow the setup in~\cite{liu2018intersection} and define two continuous variables $y_{\vc}$ and $y_{\va}$ for $\vc^\top \vx$ and $\va^\top \vx$, respectively\footnote{This is equivalent to taking continuous variables $y_{\vc+\va}=(\vc+\va)^\top\vx$ and $y_{\vc-\va}=(\vc-\va)^\top\vx$ as in~\eqref{eq:ccp-re}.}. Given $n$ scenarios $\vxi^1,\ldots,\vxi^n$, define $w_i:=d(\vxi^i)+b(\vxi^i)$ and $v_i:=d(\vxi^i)-b(\vxi^i)$ for $i\in[n]$. \citet{liu2018intersection} focus on the setting where the following condition holds:
\begin{equation}\label{two-sided:condition}
u_{\va}\geq \max\left\{w_i:~i\in[n]\right\},\quad w_i\geq v_i\geq 0,\quad\forall i\in[n]
\end{equation}
In particular, as the parameters $w_i,v_i$ are nonnegative for all $i\in[k]$, the MIP reformulation, given by~\eqref{eq:ccp-re}, of~\eqref{eq:two-sided-ccp} gives rise to the following mixed-integer set:
\begin{subequations}\label{eq:two-sided}
\begin{align}
&y_{\vc}+y_{\va} +w_i z_i \geq w_i,~~\qquad \qquad\qquad\forall i\in[n],\label{two-bigM-w}\\
&y_{\vc}-y_{\va} +(v_i+u_{\va})z_i \geq v_i,\qquad \qquad\forall i\in[n],\label{two-bigM-v}\\
&u_{\va}\geq y_{\va}\geq 0,\label{two-lb}\\
&y_{\vc}\geq 0,\label{two-linking}\\
&\vz\in\{0,1\}^n.\label{two-binary}
\end{align}
\end{subequations}
Note that the coefficient of $z_i$ in~\eqref{two-bigM-v} differs from the right-hand side, but~\eqref{two-bigM-v} indeed corresponds to the mixing set for~\eqref{two-sided-ccp-cons''} since $u_{\va}$ can be added to the both sides of~\eqref{two-bigM-v} and $y_{\vc}-y_{\va}+u_{\va}\geq 0$ holds. In addition,~\eqref{two-bigM-w} corresponds to the mixing set for~\eqref{two-sided-ccp-cons'} since $y_{\vc}+y_{\va}\geq 0$ by~\eqref{two-lb} and~\eqref{two-linking}. \citet{liu2018intersection} characterize the convex hull description of the mixed-integer set given by~\eqref{eq:two-sided} under the condition~\eqref{two-sided:condition}. It turns out that this result  can be driven as a simple consequence of Theorem~\ref{thm:main}. We will  elaborate on this in the remainder of this section.

After setting $y_1=y_{\vc}+y_{\va}$ and $y_2=y_{\vc}-y_{\va}+u_{\va}$, the set \eqref{eq:two-sided} is equivalent to the following system:
\begin{subequations}
\begin{align}
&y_1 +w_i z_i \geq w_i,~~\quad\qquad\qquad\qquad \qquad\forall i\in[n],\label{two-bigM-w'}\\
&y_2 +(v_i+u_{\va})z_i \geq (v_i+u_{\va}),\qquad \qquad\forall i\in[n],\label{two-bigM-v'}\\
&u_{\va}\geq y_1-y_2\geq -u_{\va},\label{two-lb'}\\
&y_1+y_2\geq u_{\va},\ y_1\geq0, y_2\geq0\label{two-linking'}\\
&\vz\in\{0,1\}^n.\label{two-binary'}
\end{align}
\end{subequations}
Note that the set defined by~\eqref{two-bigM-w'},~\eqref{two-bigM-v'},~\eqref{two-linking'}, and~\eqref{two-binary'} is nothing but a joint mixing set with  a linking constraint of the form $\cM(\vW,\bm{0},u_{\va})$ with $k=2$. Moreover, it follows from $w_i,v_i\geq0$ for $i\in[N]$ that
\[
w_i+(v_i+u_{\va})\geq u_{\va}~~\text{for all}~~i\in[n]\quad\text{and}\quad \min\left\{w_i:~i\in[n]\right\}+\min\left\{v_i+u_{\va}:~i\in[n]\right\}\geq u_{\va}.
\]
Then $\bar I(u_{\va})$, where $\bar I$ is defined as in~\eqref{indexset}, is given by
$$\bar I(u_{\va}) = \left\{i\in[n]:\ w_i=v_i=0\right\}=\left\{i\in[n]:\ w_i=0,\ v_i+u_{\va}=u_{\va}\right\}.$$
If $\bar I(u_{\va})\neq \emptyset$, then 
$$\max\limits_{i\in \bar I(u_{\va})}\left\{w_i\right\}=0\quad\text{and}\quad \max\limits_{i\in \bar I(u_{\va})}\left\{v_i+u_{\va}\right\}=u_{\va},$$
in which case conditions~\eqref{condition1} and~\eqref{condition2} are clearly satisfied. Therefore, by Definition~\ref{def:eps-negligible}, $\bar I(u_{\va})$ is $u_{\va}$-negligible. Furthermore, we can next argue that $L_{\vW}(u_{\va})\geq u_{\va}$. By the definition of $L_{\vW}(u_{\va})$ given in~\eqref{L_W}, when $\bar I(u_{\va})\neq [n]$,
$$L_{\vW}(u_{\va})\geq \min_{i\in[n]}\left\{w_i\right\}+\min_{i\in[n]}\left\{v_i+u_{\va}\right\}\geq u_{\va},$$
and we have $L_{\vW}(u_{\va})=+\infty \geq u_{\va}$ if $\bar I(u_{\va})=[n]$. Hence, by Theorem~\ref{thm:main}, the convex hull of the joint mixing set with a linking constraint can be obtained after applying the mixing and the aggregated mixing inequalities.

In particular, given a sequence $\{i_1\dots\to i_\theta\}$ of indices in $[n]$, the corresponding aggregated mixing inequality~\eqref{aggregate} is of the following form:
\begin{equation}\label{generalized'}
y_1+y_2 + \sum_{s\in[\tau_R]}(w_{r_s}-w_{r_{s+1}}) z_{r_s} +\sum_{s\in[\tau_G]}(v_{g_s}-v_{g_{s+1}}) z_{g_s} -u_{\va}z_{i_\theta}\geq w_{r_1}+(v_{g_1}+u_{\va}),
\end{equation}
where $\{r_1\to\cdots \to r_{\tau_R}\}$ and $\{g_1\to\cdots \to g_{\tau_G}\}$ are the $1$-mixing-subsequence and the $2$-mixing-subsequence of $\Theta$, respectively, and $w_{r_{\tau_R+1}}:=0$, $v_{g_{\tau_G+1}}:=-u_{\va}$. By Lemma~\ref{lem:j-sequence}, we know that $z_{g_{\tau_G}}=z_{i_\theta}$, so $(v_{g_{\tau_G}}-v_{g_{\tau_G+1}}) z_{g_{\tau_G}} -u_{\va}z_{i_\theta}=v_{g_{\tau_G}}z_{g_{\tau_G}}$. Since $y_1+y_2=2y_{\vc}+u_{\va}$,~\eqref{generalized'} is equivalent to the following inequality:
\begin{equation}\label{generalized}
2y_p + \sum_{s\in[\tau_R]}(w_{r_s}-w_{r_{s+1}}) z_{r_s} +\sum_{s\in[\tau_G]}(v_{g_s}-v_{g_{s+1}}) z_{g_s}\geq w_{r_1}+v_{g_1},
\end{equation}
where $w_{r_{\tau_R+1}}:=0$ as before but $v_{g_{\tau_G+1}}$ is now set to $0$.

In~\cite{liu2018intersection}, the inequality~\eqref{generalized} is called the {\it generalized mixing inequality from $\Theta$}, so the aggregated mixing inequalities generalize the generalized mixing inequalities to arbitrary $k$. Furthermore, Theorem~\ref{thm:main} can be extended slightly to recover the following main result of~\cite{liu2018intersection}:

\begin{theorem}[\cite{liu2018intersection}, Theorem 3.1]
Let $\cP$ be the mixed-integer set defined by~\eqref{two-bigM-w'}--\eqref{two-binary'}. Then  the convex hull of $\cP$ can be described by the mixing inequalities for $y_1,y_2$, the aggregated mixing inequalities of the form~\eqref{generalized'} together with~\eqref{two-lb'} and the bounds $\bm{0}\leq \vz\leq\bm{1}$ under the condition~\eqref{two-sided:condition}.%
\end{theorem}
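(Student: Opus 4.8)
The plan is to recognize the set $\cP$ defined by \eqref{two-bigM-w'}--\eqref{two-binary'} as the joint mixing set with a linking constraint $\cM(\vW,\bm{0},u_{\va})$ (with $k=2$, $\varepsilon=u_{\va}$, and columns $w_{i,1}=w_i$, $w_{i,2}=v_i+u_{\va}$) intersected with the single continuous-variable constraint \eqref{two-lb'}; let $H$ denote the slab \eqref{two-lb'}. The preceding discussion already verifies that $\bar I(u_{\va})$ is $u_{\va}$-negligible and that $u_{\va}\le L_{\vW}(u_{\va})$, so Theorem~\ref{thm:main} furnishes the inequality description of $\conv(\cM(\vW,\bm{0},u_{\va}))$ by the mixing inequalities, the aggregated mixing inequalities (which here take the explicit form \eqref{generalized'}), the linking constraint, and the bounds $\bm 0\le\vz\le\bm 1$. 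Thus the whole theorem reduces to the set identity
\[
\conv(\cP)=\conv(\cM(\vW,\bm{0},u_{\va}))\cap H.
\]
The inclusion $\subseteq$ is immediate since $\cP\subseteq\cM(\vW,\bm{0},u_{\va})$ and $\cP$ satisfies the linear constraint defining $H$; the content is the reverse inclusion.

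The key structural step I would establish first is that \emph{every} extreme point of $\conv(\cM(\vW,\bm{0},u_{\va}))$ already satisfies \eqref{two-lb'}, hence lies in $\cP$. Using Lemma~\ref{lem:inner} (after complementing $\vz$ via Remark~\ref{rem:M-P-relation}), write $M_j:=\max_{i:\,z_i=0}w_{i,j}$, so $M_1=\max_{i:z_i=0}w_i$ and $M_2=u_{\va}+\max_{i:z_i=0}v_i$ whenever $\{i:z_i=0\}\ne\emptyset$. For a type-$A$ point ($M_1+M_2>u_{\va}$) one has $y_1-y_2=M_1-M_2$; condition \eqref{two-sided:condition} gives $M_1\le\max_i w_i\le u_{\va}$ together with $\max_{i:z_i=0}v_i\ge0$, so $y_1-y_2\le 0$, while $w_i\ge v_i$ for all $i$ yields $M_1\ge\max_{i:z_i=0}v_i$ and hence $y_1-y_2\ge -u_{\va}$. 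For a type-$B$ point ($M_1+M_2\le u_{\va}$, slack placed in coordinate $d$) a direct computation gives $y_1-y_2\in\{u_{\va}-2M_2,\,2M_1-u_{\va}\}$, and $0\le M_1,M_2\le M_1+M_2\le u_{\va}$ forces $|y_1-y_2|\le u_{\va}$. In both cases \eqref{two-lb'} holds. I would also record that the extreme rays of $\conv(\cM(\vW,\bm{0},u_{\va}))$ are exactly $(\bm{e^1},\bm{0})$ and $(\bm{e^2},\bm{0})$, and that the diagonal direction $(\bm{e^1}+\bm{e^2},\bm{0})$ is a recession direction of $\cP$ (it preserves $y_1-y_2$).

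For the reverse inclusion I would use an averaging (water-filling) argument rather than a naive vertex/ray split, precisely because $(\bm{e^1},\bm{0})$ and $(\bm{e^2},\bm{0})$ are individually \emph{not} recession directions of $\cP$. Given $(\bar\vy,\bar\vz)\in\conv(\cM(\vW,\bm{0},u_{\va}))\cap H$, decompose it as $\sum_t\lambda_t(\vy^t,\vz^t)+\mu_1(\bm{e^1},\bm{0})+\mu_2(\bm{e^2},\bm{0})$ with $(\vy^t,\vz^t)$ extreme points (all in $\cP$ by the previous step), $\lambda_t\ge0$, $\sum_t\lambda_t=1$, and $\mu_1,\mu_2\ge0$. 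Assuming without loss of generality $\mu_1\ge\mu_2$, peel off the recession direction $\mu_2(\bm{e^1}+\bm{e^2},\bm{0})$ of $\cP$ and distribute the residual $y_1$-increase $\delta:=\mu_1-\mu_2$ unevenly across the $(\vy^t,\vz^t)$: choose $s_t\in\bigl[0,\,u_{\va}-(y^t_1-y^t_2)\bigr]$ with $\sum_t\lambda_t s_t=\delta$. This is feasible because $\sum_t\lambda_t\bigl(u_{\va}-(y^t_1-y^t_2)\bigr)=u_{\va}-(\bar y_1-\bar y_2)+\delta\ge\delta$, where the inequality is exactly $\bar y_1-\bar y_2\le u_{\va}$ from \eqref{two-lb'}. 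Each shifted point $(\vy^t+s_t\bm{e^1},\vz^t)$ still lies in $\cP$ (the big-$M$, bound, and linking constraints are preserved under increasing $y_1$, the cap on $s_t$ keeps $y_1-y_2\le u_{\va}$, and $y_1-y_2\ge -u_{\va}$ only improves). Hence $\sum_t\lambda_t(\vy^t+s_t\bm{e^1},\vz^t)\in\conv(\cP)$, and adding back $\mu_2(\bm{e^1}+\bm{e^2},\bm{0})$ keeps us in $\conv(\cP)$; the case $\mu_2>\mu_1$ is symmetric with the roles of $y_1,y_2$ and of the two bounds in \eqref{two-lb'} interchanged. This proves the set identity.

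Finally, I would translate back to the claimed statement: the aggregated mixing inequalities for $\cM(\vW,\bm{0},u_{\va})$ are precisely the inequalities \eqref{generalized'}, and since $u_{\va}\le L_{\vW}(u_{\va})\le L_{\vW,\Theta}$ for every sequence $\Theta$ in $[n]\setminus\bar I(u_{\va})$, the earlier proposition showing that \eqref{aggregate} dominates the linking constraint when $\varepsilon\le L_{\vW,\Theta}$ implies that $y_1+y_2\ge u_{\va}$ is redundant and may be omitted from the explicit list, recovering the description in the theorem. The main obstacle is the reverse inclusion: because the individual unit-direction rays of $\conv(\cM(\vW,\bm{0},u_{\va}))$ leave $\cP$, one cannot simply restrict a conic decomposition to feasible rays, and it is the non-uniform redistribution of the excess $y_1$-mass—made feasible by the averaged slack bound coming from \eqref{two-lb'}—that carries the argument.
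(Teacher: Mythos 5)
Your proposal is correct, and it follows the same overall reduction as the paper -- verify that $\bar I(u_{\va})$ is $u_{\va}$-negligible and $u_{\va}\leq L_{\vW}(u_{\va})$, invoke Theorem~\ref{thm:main} for the set $\cR$ without~\eqref{two-lb'}, check via Lemma~\ref{lem:inner} that every extreme point of $\conv(\cR)$ satisfies~\eqref{two-lb'}, and conclude that intersecting with the slab loses nothing -- but the execution of the final step is genuinely different. The paper argues geometrically: since the two constraints in~\eqref{two-lb'} define parallel hyperplanes, every new vertex of $\conv(\cR)\cap H$ arises as the intersection of one of these hyperplanes with an unbounded edge of $\conv(\cR)$, i.e., a ray $(\bm{e^j},\bm{0})$ emanating from an old extreme point; since the rays have zero $\vz$-component, the new vertices inherit an integral $\vz$-component and hence lie in $\cP$. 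You instead give a constructive decomposition: write a point of $\conv(\cR)\cap H$ via Minkowski--Weyl, peel off the symmetric part $\mu_2(\bm{e^1}+\bm{e^2},\bm{0})$ (which does recede in $\cP$), and absorb the residual $\delta=\mu_1-\mu_2$ by shifting each extreme point $(\vy^t,\vz^t)$ by $s_t\bm{e^1}$ with $s_t\leq u_{\va}-(y_1^t-y_2^t)$, the averaged feasibility of which is exactly the constraint $\bar y_1-\bar y_2\leq u_{\va}$. Your computations (the case analysis $y_1-y_2=M_1-M_2$, $u_{\va}-2M_2$, $2M_1-u_{\va}$ under~\eqref{two-sided:condition}, and the identity $\sum_t\lambda_t(u_{\va}-(y_1^t-y_2^t))=u_{\va}-(\bar y_1-\bar y_2)+\delta$) are correct. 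The paper's route is shorter but leans on the structure of vertices of a polyhedron sliced by two parallel hyperplanes; yours is more computational but entirely self-contained and correctly diagnoses why a naive restriction to feasible rays fails. You also explicitly justify omitting the linking constraint $y_1+y_2\geq u_{\va}$ from the final list via its domination by the aggregated mixing inequalities, a point the paper passes over silently.
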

\begin{proof}
Let $\cR$ be the mixed-integer set defined by~\eqref{two-bigM-w'},~\eqref{two-bigM-v'},~\eqref{two-linking'}, and~\eqref{two-binary'}. Then $\cP\subseteq \cR$ and, by Theorem~\ref{thm:main}, $\conv(\cR)$ is described by the mixing inequalities for $y_1,y_2$ and the generalized mixing inequalities of the form~\eqref{generalized'} together with $\bm{0}\leq \vz\leq\bm{1}$. We will argue that adding constraint~\eqref{two-lb'}, that is $u_{\va}\geq y_1-y_2\geq -u_{\va}$, to the description of~$\conv(\cR)$ does not affect integrality of the resulting system.

By Lemma~\ref{lem:inner}, the extreme rays of $\conv(\cR)$ are $(\bm{e^j},\bm{0})$ for $j\in[2]$, and the extreme points are
\begin{itemize}
	\item $A(z)=(y_1,y_2,\vz)$ for $\vz\in \{0,1\}^n\setminus\{\bm{1}\}$ where 
	\[
	y_1=\max\limits_{i\in[n]}\left\{w_i(1-z_i)\right\}\quad\text{and}\quad y_2=\max\limits_{i\in[n]}\left\{(v_i+u_{\va})(1-z_i)\right\},\]
	\item $B(1)=(u_{\va},0,\bm{1})$ and $B(2)=(0,u_{\va},\bm{1})$.
\end{itemize}
It follows from \eqref{two-sided:condition} %
that all extreme points of $\conv(\cR)$ satisfy $u_{\va}\geq y_1-y_2\geq -u_{\va}$. Observe that two hyperplanes $\left\{(y,z):u_{\va}=y_1-y_2\right\}$ and $\left\{(y,z):y_1-y_2=-u_{\va}\right\}$ are parallel. So, each of the new extreme points created after adding $u_{\va}\geq y_1-y_2\geq -u_{\va}$ is obtained as the intersection of one of the two hyperplanes and a ray emanating from an extreme point of $\conv(\cR)$. Since every extreme ray of $\conv(\cR)$ has $\bm{0}$ in its $z$ component and every extreme point of $\conv(\cR)$ has integral $z$ component, the $z$ component of every new extreme point is also integral, as required.

Therefore, the convex hull of $\cP$ is equal to $\left\{(\vy,\vz)\in\conv(\cR):(\vy,\vz)~\text{satisfies}~\eqref{two-lb'}\right\}$, implying in turn that $\conv(\cP)$ can be described by the mixing inequalities for $y_1,y_2$, the aggregated mixing inequalities of the form~\eqref{generalized'} together with~\eqref{two-lb'} and the bounds $\bm{0}\leq \vz\leq\bm{1}$, as required.
\end{proof}

\section{Conclusions}
In this paper, we show that mixing inequalities with binary variables may be viewed as polymatroid inequalities applied to a specific submodular function. With this observation, we unify and generalize extant valid inequalities and convex hull descriptions of the mixing sets with common binary variables and their intersection under additional constraints on a linear function of the continuous variables. Such substructures have attracted interest as they appear in joint CCPs. 
However, our results are broadly applicable to other settings that involve similar substructures, including epigraphs of submodular functions other than those considered in this paper.

\section*{Acknowledgments}
The authors wish to thank the review team for their constructive feedback that improved the presentation of the material in this paper. This research is supported, in part, by ONR grant N00014-19-1-2321 and the Institute for Basic Science (IBS-R029-C1, IBS-R029-Y2).

{
\bibliographystyle{abbrvnat} %
\bibliography{mybibfile}

}

\appendix
\section{Proof of Lemma~\ref{lem:aggregate-lemma}}

Let $(\bm{\bar y},\bm{\bar z})\in\R_+^k\times [0,1]^n$ be a point satisfying~\eqref{bigM'}--\eqref{linking'}, and assume that $(\bm{\bar y},\bm{\bar z})$ satisfies~\eqref{aggregate} for all sequences contained in $\left\{i\in[n]:~\bar z_i<1\right\}$. Then we need to prove that $(\bm{\bar y},\bm{\bar z})$ satisfies~\eqref{aggregate} for all the other sequences as well.

For a sequence $\Theta$, we denote by $N(\Theta)$ the set $\left\{i\in\Theta:~\bar z_i=1\right\}$. We argue by induction on $\left|N(\Theta)\right|$ that $(\bm{\bar y},\bm{\bar z})$ satisfies~\eqref{aggregate} for $\Theta$. If $\left|N(\Theta)\right|=0$, then $(\bm{\bar y},\bm{\bar z})$ satisfies~\eqref{aggregate} by the assumption. For the induction step, we assume that $(\bm{\bar y},\bm{\bar z})$ satisfies~\eqref{aggregate} for every sequence $\Theta$ with $|N(\Theta)|<N$ for some $N\geq 1$. Now we take a sequence $\Theta=\left\{i_1\to\cdots\to i_\theta\right\}$ with $|N(\Theta)|=N$. Notice that $(\bm{\bar y},\bm{\bar z})$ satisfies~\eqref{aggregate} if and only if $(\bm{\bar y},\bm{\bar z})$ satisfies
\begin{align}
&\sum_{j\in[k]}\left(\bar y_j+\sum_{t\in[\theta]} \left(w_{i_t,j}   -  \max\left\{w_{i,j}:~
\text{$i_t$ precedes $i$ in $\Theta$}\right\}   \right)_+\bar z_{i_t}\right)\nonumber\\
&\qquad\qquad \qquad\qquad\qquad\qquad \qquad\qquad\qquad-\min\left\{\varepsilon,~L_{\vW,\Theta}\right\} \bar z_{i_\theta}\geq\sum_{j\in[k]} \max\left\{w_{i,j}:~i\in\Theta\right\}.\label{eq1'}
\end{align}
Hence, it is sufficient to show that $(\bar  y,\bm{\bar z})$ satisfies~\eqref{eq1'}. We consider two cases $\bar z_{i_\theta}=1$ and $\bar z_{i_\theta}\neq 1$ separately.

First, consider the case when $\bar z_{i_\theta}\neq 1$. Since $|N(\Theta)|\geq 1$, we have $\bar z_{i_p}=1$ for some $p\in[\theta-1]$. Let $\Theta^\prime$ denote the subsequence of $\Theta$ obtained by removing $i_p$. Then $|N(\Theta^\prime)|=|N(\Theta)|-1$, so it follows from the induction hypothesis that~\eqref{aggregate} for $\Theta^\prime$ is valid for $(\bm{\bar y},\bm{\bar z})$:
\begin{align}
&\sum_{j\in[k]}\left(\bar y_j+\sum_{t\in[\theta]\setminus\{p\}} \left(w_{i_t,j}   -  \max\left\{w_{i,j}:~
\text{$i_t$ precedes $i$ in $\Theta^\prime$}\right\}   \right)_+\bar z_{i_t}\right)\nonumber\\
&\qquad\qquad \qquad\qquad\qquad\qquad \qquad\qquad\qquad-\min\left\{\varepsilon,~L_{\vW,\Theta^\prime}\right\} \bar z_{i_\theta}\geq\sum_{j\in[k]} \max\left\{w_{i,j}:~i\in\Theta^\prime\right\}.\label{eq2}
\end{align}
Since $\Theta^\prime$ is a subsequence of $\Theta$, it follows that for any $t\neq p$.
\begin{equation}\label{eq3}
\left(w_{i_t,j}   -  \max\left\{w_{i,j}:~
\text{$i_t$ precedes $i$ in $\Theta^\prime$}\right\}   \right)_+\geq \left(w_{i_t,j}   -  \max\left\{w_{i,j}:~
\text{$i_t$ precedes $i$ in $\Theta$}\right\}   \right)_+.
\end{equation}
Since $-\bar z_{i_t}\geq -1$ is valid for each $t$, we deduce the following inequality from~\eqref{eq2}:
\begin{align}
&\sum_{j\in[k]}\left(\bar y_j+\sum_{t\in[\theta]\setminus\{p\}} \left(w_{i_t,j}   -  \max\left\{w_{i,j}:~
\text{$i_t$ precedes $i$ in $\Theta$}\right\}   \right)_+\bar z_{i_t}\right)-\min\left\{\varepsilon,~L_{\vW,\Theta^\prime}\right\} \bar z_{i_\theta}\nonumber\\
&~~~\quad\qquad \qquad\qquad\qquad\geq\sum_{j\in[k]} \max\left\{w_{i,j}:~i\in\Theta\right\}-\sum_{j\in[k]}\left(w_{i_p,j}   -  \max\left\{w_{i,j}:~
\text{$i_p$ precedes $i$ in $\Theta$}\right\}   \right)_+,\label{eq4}
\end{align}
because 
\[
\sum_{t\in[\theta]\setminus\{p\}} \left(w_{i_t,j}   -  \max\left\{w_{i,j}:~
\text{$i_t$ precedes $i$ in $\Theta^\prime$}\right\}   \right)_+=\sum_{j\in[k]} \max\left\{w_{i,j}:~i\in\Theta^\prime\right\}
\]
and
\begin{equation}\label{sum}
\sum_{t\in[\theta]} \left(w_{i_t,j}   -  \max\left\{w_{i,j}:~
\text{$i_t$ precedes $i$ in $\Theta$}\right\}   \right)_+=\sum_{j\in[k]} \max\left\{w_{i,j}:~i\in\Theta\right\}.
\end{equation}
Moreover, notice that $L_{\vW,\Theta^\prime}\geq L_{\vW,\Theta}$ due to~\eqref{eq3}. So, it follows that~\eqref{eq4} implies~\eqref{eq1'} since $\bar z_{i_p}=1$. This in turn implies that $(\bm{\bar y},\bm{\bar z})$ satisfies~\eqref{aggregate} for $\Theta$, as required.

Next we consider the $\bar z_{i_\theta}=1$ case. In this case,~\eqref{eq1'} is equivalent to
\begin{align}
&\sum_{j\in[k]}\left(\bar y_j+\sum_{t\in[\theta-1]} \left(w_{i_t,j}   -  \max\left\{w_{i,j}:~
\text{$i_t$ precedes $i$ in $\Theta$}\right\}   \right)_+\bar z_{i_t}\right)\nonumber\\
&\qquad\qquad \qquad\qquad\qquad\qquad \qquad\qquad\geq \min\left\{\varepsilon,~L_{\vW,\Theta}\right\}-\sum_{j\in[k]}w_{i_\theta, j}+\sum_{j\in[k]} \max\left\{w_{i,j}:~i\in\Theta\right\}.\label{eq5}
\end{align}
Take the subsequence $\Theta^\prime$ of $\Theta$ obtained by removing $i_\theta$. As before, we have $|N(\Theta^\prime)|=|N(\Theta)|-1$, and the induction hypothesis implies that~\eqref{aggregate} for $\Theta^\prime$ is valid for $(\bm{\bar y},\bm{\bar z})$:
\begin{align}
&\sum_{j\in[k]}\left(\bar y_j+\sum_{t\in[\theta-2]} \left(w_{i_t,j}   -  \max\left\{w_{i,j}:~
\text{$i_t$ precedes $i$ in $\Theta^\prime$}\right\}   \right)_+\bar z_{i_t}\right)\nonumber\\
&\qquad\qquad \qquad\qquad\qquad\qquad +\left(\sum_{j\in[k]}w_{i_{\theta-1},j}-\min\left\{\varepsilon,~L_{\vW,\Theta^\prime}\right\} \right)\bar z_{i_{\theta-1}}\geq\sum_{j\in[k]} \max\left\{w_{i,j}:~i\in\Theta^\prime\right\}.\label{eq6}
\end{align}
We will deduce from~\eqref{eq6} that~\eqref{eq5} is valid for $(\bm{\bar y},\bm{\bar z})$. As $\Theta^\prime$ is a subsequence of $\Theta$,~\eqref{eq3} holds for $t\in[\theta-2]$. So, as $(\bm{\bar y},\bm{\bar z})$ satisfies $-\bar z_{i_t}\geq -1$ for $t\in[\theta-2]$, we obtain the following from~\eqref{eq6}:
\begin{align}
&\sum_{j\in[k]}\left(\bar y_j+\sum_{t\in[\theta-2]} \left(w_{i_t,j}   -  \max\left\{w_{i,j}:
\text{$i_t$ precedes $i$ in $\Theta$}\right\}   \right)_+\bar z_{i_t}\right)+ \left(\sum_{j\in[k]}w_{i_{\theta-1},j}-\min\left\{\varepsilon,L_{\vW,\Theta^\prime}\right\} \right)\bar z_{i_{\theta-1}}\nonumber\\
&\qquad\qquad\qquad\qquad\qquad\qquad\qquad\geq\sum_{j\in[k]}\min\left\{w_{i_{\theta-1},j},w_{i_\theta, j}\right\} -\sum_{j\in[k]} w_{i_{\theta},j}+\sum_{j\in[k]} \max\left\{w_{i,j}:~i\in\Theta\right\},\label{eq7}
\end{align}
because~\eqref{sum} holds,
\[
\sum_{t\in[\theta-1]} \left(w_{i_t,j}   -  \max\left\{w_{i,j}:~
\text{$i_t$ precedes $i$ in $\Theta^\prime$}\right\}   \right)_+=\sum_{j\in[k]} \max\left\{w_{i,j}:~i\in\Theta^\prime\right\},
\]
and
\begin{equation}\label{sum'}
\sum_{j\in[k]}w_{i_{\theta-1},j}-\sum_{j\in[k]}\left(w_{i_{\theta-1},j}   -  \max\left\{w_{i,j}:~
\text{$i_{\theta-1}$ precedes $i$ in $\Theta$}\right\}   \right)_+=\sum_{j\in[k]}\min\left\{w_{i_{\theta-1},j},w_{i_\theta, j}\right\}.
\end{equation}
Now let us compare the coefficient of $\bar z_{i_{\theta-1}}$ in~\eqref{eq7} and that of $\bar z_{i_{\theta-1}}$ in~\eqref{eq5}. If the coefficient in~\eqref{eq7} is less than the coefficient in~\eqref{eq5}, then~\eqref{eq7} implies that~\eqref{eq5} is valid, because we can add an appropriate scalar multiple of $\bar z_{i_\theta-1}\geq 0$ to~\eqref{eq7} in order to achieve the coefficient in~\eqref{eq5} and the term $\sum_{j\in[k]}\min\left\{w_{i_{\theta-1},j},w_{i_\theta, j}\right\}$ in the right-hand side of~\eqref{eq7} is at least $L_{\vW,\Theta}$. If not, then by adding an appropriate scalar multiple of $-\bar z_{i_{\theta-1}}\geq -1$ to~\eqref{eq7}, we deduce the following inequality:
\begin{align}
&\sum_{j\in[k]}\left(\bar y_j+\sum_{t\in[\theta-1]} \left(w_{i_t,j}   -  \max\left\{w_{i,j}:~
\text{$i_t$ precedes $i$ in $\Theta$}\right\}   \right)_+\bar z_{i_t}\right)\nonumber\\
&\qquad\qquad \qquad\qquad\qquad\qquad \qquad\qquad\geq \min\left\{\varepsilon,~L_{\vW,\Theta^\prime}\right\}-\sum_{j\in[k]}w_{i_\theta, j}+\sum_{j\in[k]} \max\left\{w_{i,j}:~i\in\Theta\right\},\label{eq8}
\end{align}
because~\eqref{sum'} holds. Since $\Theta^\prime$ is a subsequence of $\Theta$, we have $L_{\vW,\Theta^\prime}\geq L_{\vW,\Theta}$, so it follows that the term $\min\left\{\varepsilon,~L_{\vW,\Theta^\prime}\right\}$ in the right-hand side of~\eqref{eq8} is at least $\min\left\{\varepsilon,~L_{\vW,\Theta}\right\}$. Therefore,~\eqref{eq8} implies that~\eqref{eq5} is valid for $(\bm{\bar y},\bm{\bar z})$. In summary, when $\bar z_{i_\theta}=1$, $(\bm{\bar y},\bm{\bar z})$ satisfies~\eqref{eq5}, thereby proving that $(\bm{\bar y},\bm{\bar z})$ satisfies~\eqref{aggregate}. This finishes the proof of Lemma~\ref{lem:aggregate-lemma}.

\section{Proof of Theorem~\ref{thm:main}}\label{sec:thm}

Propositions~\ref{prop:mixing-polymatroid},~\ref{prop:coeff2-special} and~\ref{prop:coeff2} already prove that {\bf (i)$\Rightarrow$(iii)},  and the direction {\bf (iii)$\Rightarrow$(ii)} is trivial. Thus, what remains is to show {\bf (ii)$\Rightarrow$(i)}. We will prove the contrapositive of this direction. It is sufficient to exhibit a point $(\bm{\bar y}, \bm{\bar z})$ with $\sum_{j\in[k]}\bar y_j\geq \varepsilon$ and $\mathbf{0}\leq \bm{\bar z}\leq\mathbf{1}$ that satisfies the mixing and the aggregated mixing inequalities but is not contained in the convex hull of $\cM(\vW,\mathbf{0},\varepsilon)$.
	
	Assume first that $\bar I(\varepsilon)$ is not $\varepsilon$-negligible. Then $\bar I(\varepsilon)$ is nonempty and either~\eqref{condition1} or~\eqref{condition2} is violated. First, consider the case when~\eqref{condition2} is violated. Take a minimal subset $U$ of $\bar I(\varepsilon)$ satisfying $\sum_{j\in[k]}\max\limits_{i\in U}\{w_{i,j}\}>\varepsilon$. Note that by definition of $\bar I(\varepsilon)$, we have for every $i\in \bar I(\epsilon)$ that $\sum_{j\in[k]}w_{i,j}\leq\varepsilon$. Then by the assumption that $\sum_{j\in[k]}\max\limits_{i\in U}\{w_{i,j}\}>\varepsilon$, we deduce that $|U|\geq 2$. Moreover, by our minimal choice of $U$, we have  $\sum_{j\in[k]}\max\limits_{i\in V}\{w_{i,j}\}\leq\varepsilon$ for any $V\subset U$ such that $|V|\leq|U|-1$. Moreover, for each $j\in[k]$, the largest element of $\left\{w_{i,j}:i\in U\right\}$ is contained in $|U|-1$ subsets of $\left\{w_{i,j}:i\in U\right\}$ of size $|U|-1$, while the second largest element of $U$  is the largest in another subset of size $|U|-1$. From these observations, we deduce that
	\begin{equation}\label{eq2-1}
	(|U|-1)\sum_{j\in[k]}\max\limits_{i\in U}\{w_{i,j}\}+\sum_{j\in[k]}\text{second-max}\{w_{i,j}:~i\in U\}=\sum_{\substack{V\subset U\\|V|=|U|-1}}	\left(\sum_{j\in[k]}\max\limits_{i\in V}\{w_{i,j}\}\right)\leq |U|\,\varepsilon
	\end{equation}
	where $\text{second-max}\{w_{i,j}:~i\in U\}$ denotes the second largest value in $\{w_{i,j}:~i\in U\}$ for $j\in[k]$. Let us consider $(\bm{\bar y},\bm{\bar z})$ where
	\[
	\bar z_i=\begin{cases}
	\frac{1}{|U|}&\text{if}~i\in U\\
	1&\text{if}~i\notin U
	\end{cases}
	\quad
	\text{and}
	\quad
	\bar y_j=\begin{cases}
	\frac{|U|-1}{|U|}\max\limits_{i\in U}\left\{w_{i,j}\right\},&\text{if}~j\in[k-1]\\
	\frac{|U|-1}{|U|}\max\limits_{i\in U}\left\{w_{i,k}\right\}+\left(\varepsilon-\frac{|U|-1}{|U|}\sum\limits_{j\in[k]}\max\limits_{i\in U}\left\{w_{i,j}\right\}\right),&\text{if}~j=k
	\end{cases}
	\]
	Then, we always have $\sum_{j\in[k]}\bar y_j=\varepsilon$. This, together with~\eqref{eq2-1}, implies that
	\begin{equation}\label{eq2-1'}
	\sum_{j\in[k]}\bar y_j=\varepsilon \geq \frac{|U|-1}{|U|}\sum\limits_{j\in[k]}\max\limits_{i\in U}\left\{w_{i,j}\right\}+\frac{1}{|U|}\sum_{j\in[k]}\text{second-max}\{w_{i,j}:~i\in U\}.
	\end{equation}
	Then, from $\vW\in\R^{n\times k}_+$ we deduce $\sum_{j\in[k]}\text{second-max}\{w_{i,j}:~i\in U\}\geq 0$, and hence $\bar y_k\geq \frac{|U|-1}{|U|}\max\limits_{i\in U}\left\{w_{i,k}\right\}$. 
	Let us argue that $(\bm{\bar y},\bm{\bar z})$ satisfies the mixing and the aggregated mixing inequalities. Take a $j$-mixing-sequence $\{j_1\to\cdots\to j_{\tau_j}\}$. Since $\sum_{s\in[\tau_j]}(w_{j_s,j}-w_{j_{s+1},j})=w_{j_1,j}$, $(\bm{\bar y},\bm{\bar z})$ satisfies~\eqref{mixing} if and only if 
	\[
	\bar y_j \geq \frac{|U|-1}{|U|}\sum_{j_s\in U}\left( w_{j_s,j}-w_{j_{s+1},j}\right).
	\]
	As $\sum_{j_s\in U}\left( w_{j_s,j}-w_{j_{s+1},j}\right)\leq \max\limits_{i\in U}\{w_{i,j}\}$, it follows that $(\bm{\bar y},\bm{\bar z})$ satisfies~\eqref{mixing}. Now we argue that $(\bm{\bar y},\bm{\bar z})$ satisfies every aggregated mixing inequality. By Lemma~\ref{lem:aggregate-lemma}, it is sufficient to argue this for only the sequences $\Theta=\left\{i_1\to\cdots\to i_\theta\right\}$ that are contained in $U$. By~\eqref{coeff-transf}, $(\bm{\bar y},\bm{\bar z})$ satisfies~\eqref{aggregate} for $\Theta$ if and only if
	\begin{align}
	&\sum_{j\in[k]}\left(\bar y_j+\sum_{t\in[\Theta]} \left(w_{i_t,j}   -  \max\left\{w_{i,j}:~
	\text{$i_t$ precedes $i$ in $\Theta$}\right\}   \right)_+\bar z_{i_t}\right)\nonumber\\
	&\qquad\qquad \qquad\qquad\qquad\qquad \qquad\qquad\qquad-\min\left\{\varepsilon,~L_{\vW,\Theta}\right\} \bar z_{i_\theta}\geq\sum_{j\in[k]} \max\left\{w_{i,j}:~i\in\Theta\right\}.\label{eq1}
	\end{align}
	Since $\Theta\subseteq U$, we have $\bar z_{i_1}=\cdots=\bar z_{i_\theta}=\frac{1}{|U|}$. Then,~\eqref{eq1} is exactly 
	\begin{equation}
	\sum_{j\in[k]}\bar y_j\geq\frac{|U|-1}{|U|}\sum_{j\in[k]} \max\left\{w_{i,j}:~i\in\Theta\right\}+\frac{1}{|U|}\min\left\{\varepsilon,~L_{\vW,\Theta}\right\}.\label{eq2-2}
	\end{equation}
	Recall that $\sum_{j\in[k]}\bar y_j=\varepsilon$. If $|\Theta|=1$, then because $|U|\geq 2$ we deduce $\Theta\neq U$. Moreover, because $|\Theta|=1$ and $\Theta$ is a proper subset of $\bar I (\epsilon)$, we deduce from the definition of $\bar I (\epsilon)$ that $\sum_{j\in[k]} \max\left\{w_{i,j}:~i\in\Theta\right\}\leq \varepsilon$. Hence, when $|\Theta|=1$, we also have $\min\left\{\varepsilon,~L_{\vW,\Theta}\right\}\leq \varepsilon$, and thus~\eqref{eq2-2} clearly holds. So, we may assume that $|\Theta|\geq 2$. By definition of $L_{\vW,\Theta}$ in~\eqref{L_WTheta}, we have $L_{\vW,\Theta}\leq \sum_{j\in[k]}\text{second-max}\{w_{i,j}:i\in \Theta\}$ where $\text{second-max}\{w_{i,j}:i\in \Theta\}$ denotes the second largest element in $\{w_{i,j}:~i\in \Theta\}$. Since $\max\limits_{i\in \Theta}\left\{w_{i,j}\right\}\leq \max\limits_{i\in U}\left\{w_{i,j}\right\}$ and $\text{second-max}\{w_{i,j}:~i\in \Theta\}\leq \text{second-max}\{w_{i,j}:~i\in U\}$ hold because $\Theta\subseteq U$, we deduce from~\eqref{eq2-1'} that~\eqref{eq2-2} holds. Consequently, Lemma~\ref{lem:aggregate-lemma} implies that $(\bm{\bar y},\bm{\bar z})$ satisfies the aggregated mixing inequalities~\eqref{aggregate} for all sequences as well. Let us now  show that $(\bm{\bar y},\bm{\bar z})$ is not contained in $\conv(\cM(\vW,\mathbf{0},\varepsilon))$. Observe that $(\bm{\bar y},\bm{\bar z})$ satisfies the constraints $z_i\leq 1$ for $i\notin U$ at equality. For $j\in[k-1]$, let $\{j_1\to\cdots\to j_{|U|}\}$ be an ordering of the indices in $U$ such that $w_{j_1,j}\geq\cdots \geq w_{j_{|U|},j}$. Then $\{j_1\}$, $\{j_1\to j_2\},\ldots,\{j_1\to\cdots\to j_{|U|}\}$ are all $j$-mixing-sequences, and notice that $(\bm{\bar y},\bm{\bar z})$ satisfies the mixing inequalities corresponding to all these $j$-mixing-sequences at equality. In particular, it follows that $(\bm{\bar y}, \bm{\bar z})$ satisfies $z_{j_1}=z_{j_2}=\cdots=z_{j_{|U|}}$ at equality. There are only two points in $\{0,1\}^n$ that satisfy both of the constraints $z_i\leq 1$ for $i\notin U$ and $z_{j_1}=z_{j_2}=\cdots=z_{j_{|U|}}$ at equality; these points are $\mathbf{1}$ and %
	$\mathbf{1}_{[n]\setminus U}$. 
	Let $\bm{y^1},\bm{y^2}\in\R^k$ be such that $(\bm{y^1},\mathbf{1}),(\bm{y^2},%
	\mathbf{1}_{[n]\setminus U})
	\in\cM(\vW,\mathbf{0},\varepsilon)$. Then we have
	\[
	\sum_{j\in[k]}y^1_j\geq \varepsilon\quad \text{and}\quad \sum_{j\in[k]}y^2_j\geq \sum_{j\in[k]}\max\limits_{i\in U}\{w_{i,j}\}.
	\]
	As $\sum_{j\in[k]}\max\limits_{i\in U}\{w_{i,j}\}>\varepsilon$ by our assumption and $\sum_{j\in[k]}\bar y_j=\varepsilon$, $(\bm{\bar y},\bm{\bar z})$ cannot be a convex combination of $(\bm{y^1},\mathbf{1})$ and $(\bm{y^2},\mathbf{1}_{[n]\setminus U})$, implying in turn that $(\bm{\bar y},\bm{\bar z})$ does not belong to $\conv(\cM(\vW,\mathbf{0},\varepsilon))$.
	
	Now consider the case when~\eqref{condition1} is violated. Then there exist $p\in [n]\setminus \bar I(\varepsilon)$ and $q\in \bar I(\varepsilon)$ such that $w_{q,j}>w_{p,j}$ for some $j\in[k]$. In particular, $\sum_{j\in[k]}w_{p,j}<\sum_{j\in[k]}\max\{w_{p,j},w_{q,j}\}$. Let us consider the point $(\bm{\bar y}, \bm{\bar z})$ where
	\[
	\bar z_i=\begin{cases}
	\frac{1}{2}&\text{if}~i\in\{p,q\}\\
	1&\text{if}~i\notin \{p,q\}
	\end{cases}, \quad\text{and}
	\]
	\[
	\bar y_j=\begin{cases}
	\frac{1}{2}\max\left\{w_{p,j},~w_{q,j}\right\},&\text{if}~j\in[k-1]\\
	\frac{1}{2}\max\left\{w_{p,k},~w_{q,k}\right\}+\frac{1}{2}\left(\varepsilon+\sum\limits_{j\in[k]}w_{p,j}-\sum\limits_{j\in[k]}\max\left\{w_{p,j},~w_{q,j}\right\}\right),&\text{if}~j=k
	\end{cases}
	\]
	By definition of $\bm{\bar y}$, we always have $\sum_{j\in[k]}\bar y_j=\frac{1}{2}\left(\varepsilon+\sum\limits_{j\in[k]}w_{p,j}\right)>\varepsilon$, where the inequality follows from $p\notin \bar I(\epsilon)$. Moreover, as $p\in [n]\setminus \bar I(\varepsilon)$ and $q\in \bar I(\varepsilon)$, we have $\sum_{j\in[k]}w_{p,j}>\varepsilon\geq \sum_{j\in[k]}w_{q,j}$, and hence 
	\[
	\varepsilon+\sum\limits_{j\in[k]}w_{p,j}-\sum\limits_{j\in[k]}\max\left\{w_{p,j},~w_{q,j}\right\}\geq \sum_{j\in[k]}w_{q,j} +\sum\limits_{j\in[k]}w_{p,j}-\sum\limits_{j\in[k]}\max\left\{w_{p,j},~w_{q,j}\right\}=\sum\limits_{j\in[k]}\min\left\{w_{p,j},~w_{q,j}\right\}\geq0, 
	\]
	where the last inequality follows from the fact that $w_{i,j}\geq0$ for all $i\in[n]$ and $j\in[k]$.  
	So, it follows that
	\[
	\bar y_k\geq \frac{1}{2}\max\left\{w_{p,k},~w_{q,k}\right\}.
	\]
	As before, we can argue that $(\bm{\bar y},\bm{\bar z})$ satisfies the mixing inequalities. Now we argue that $(\bm{\bar y},\bm{\bar z})$ satisfies every aggregated mixing inequality. By Lemma~\ref{lem:aggregate-lemma}, it is sufficient to consider only the sequences $\Theta=\left\{i_1\to\cdots\to i_\theta\right\}$ that are contained in $\{p,q\}$.  Since $\Theta\subseteq \{p,q\}$, we know that $\bar z_{i_1}=\cdots=\bar z_{i_\theta}=\frac{1}{2}$. Then, the following inequality~\eqref{eq2-4} implies~\eqref{eq1}.  
	\begin{equation}
	\sum_{j\in[k]}\bar y_j=\frac{1}{2}\left(\varepsilon+\sum\limits_{j\in[k]}w_{p,j}\right)\geq\frac{1}{2}\min\left\{\varepsilon,~L_{\vW,\Theta}\right\} +\frac{1}{2}\sum_{j\in[k]} \max\left\{w_{i,j}:~i\in\Theta\right\}.\label{eq2-4}
	\end{equation}
	When $\Theta$ contains both $p$ and $q$, we have $L_{\vW,\Theta}=\sum_{j\in[k]}\min\{w_{p,j},w_{q,j}\}\leq \sum_{j\in[k]} w_{q,j}\leq \varepsilon$ (since $q\in\bar I(\epsilon)$) and $\sum_{j\in[k]} \max\left\{w_{i,j}:~i\in\Theta\right\}=\sum_{j\in[k]} \max\left\{w_{p,j},w_{q,j}\right\}$. Then the right-hand side of~\eqref{eq2-4} is 
	\[
	\frac{1}{2} \left(\sum_{j\in[k]}\min\{w_{p,j},w_{q,j}\}+\sum_{j\in[k]}\min\{w_{p,j},w_{q,j}\}\right)=\frac{1}{2}\sum_{j\in[k]}w_{p,j}+\frac{1}{2}\sum_{j\in[k]}w_{q,j},
	\]
	so inequality~\eqref{eq2-4} holds in this case since $q\in\bar I(\epsilon)$. If $\Theta=\{p\}$ or $\Theta=\{q\}$, inequality \eqref{eq2-4} clearly holds. Consequently, Lemma~\ref{lem:aggregate-lemma} implies that $(\bm{\bar y},\bm{\bar z})$ satisfies the aggregated mixing inequalities~\eqref{aggregate} for all sequences as well. Suppose for a contradiction that $(\bm{\bar y},\bm{\bar z})$ is a convex combination of two points $(\bm{y^1},\bm{z^1})$ and $(\bm{y^2},\bm{z^2})$ in $\cM(\vW,\mathbf{0},\varepsilon)$. As the previous case, we can argue that $\bm{z^1}$ and $\bm{z^2}$ satisfy  $z_p=z_q$ and $z_i\leq 1$ for $i\not\in \{p,q\}$  at equality, and therefore, $\bm{z^1}=\mathbf{1}$ and $\bm{z^2}=\mathbf{1}_{[n]\setminus \{p,q\}}$.
	Then we have
	\[
	\sum_{j\in[k]}y^1_j\geq \varepsilon,\quad\sum_{j\in[k]}y^2_j\geq \sum_{j\in[k]}\max\{w_{p,j},w_{q,j}\}\quad\text{and}\quad (\bm{\bar y},\bm{\bar z})=\frac{1}{2}(\bm{y^1},\bm{z^1})+\frac{1}{2}(\bm{y^2},\bm{z^2}),
	\]
	which implies that
	\[
	\frac{1}{2}\left(\varepsilon+\sum_{j\in[k]}w_{p,j}\right) = \sum_{j\in[k]} \bar y_j=\frac{1}{2} \sum_{j\in[k]} (y^1_j+y^2_j)\geq \frac{1}{2}\varepsilon+\frac{1}{2}\sum_{j\in[k]}\max\{w_{p,j},w_{q,j}\}.
	\]
	This is a contradiction, because we assumed $\sum_{j\in[k]}w_{p,j}<\sum_{j\in[k]}\max\{w_{p,j},w_{q,j}\}$. Therefore, $(\bm{\bar y},\bm{\bar z})$ is not contained in $\conv(\cM(\vW,\mathbf{0},\varepsilon))$, as required.
	
	In order to finish the proof we consider the case of $\varepsilon >L_{\vW}(\varepsilon)$. Based on the previous parts of the proof, we may assume that $\bar I(\varepsilon)$ is $\varepsilon$-negligible. Then, $L_{\vW}(\varepsilon)$ is finite, and thus there exist distinct $p,q\in [n]\setminus \bar I(\varepsilon)$ such that $\varepsilon>\sum_{j\in[k]}\min\left\{w_{p,j},~w_{q,j}\right\}=L_{\vW}(\varepsilon)$. Let us consider the point $(\bm{\bar y}, \bm{\bar z})$ where
	\[
	\bar z_i=\begin{cases}
	\frac{1}{2}&\text{if}~i\in\{p,q\}\\
	1&\text{if}~i\notin \{p,q\}
	\end{cases}
	\quad
	\text{and}
	\quad
	\bar y_j=\begin{cases}
	\frac{1}{2}\max\left\{w_{p,j},~w_{q,j}\right\}&\text{if}~j\in[k-1]\\
	\frac{1}{2}\max\left\{w_{p,k},~w_{q,k}\right\}+\frac{1}{2}L_{\vW}(\varepsilon)&\text{if}~j=k
	\end{cases}.
	\]
	Then 
	\[
	\sum_{j\in[k]}\bar y_j=\sum_{j\in[k]}\frac{1}{2}\max\left\{w_{p,j},~w_{q,j}\right\}+\frac{1}{2}\sum_{j\in[k]}\min\left\{w_{p,j},~w_{q,j}\right\}=\frac{1}{2}\sum_{j\in[k]}w_{p,j}+\frac{1}{2}\sum_{j\in[k]}w_{q,j}>\varepsilon,
	\]
	where the first equation follows from the properties of $L_{\vW}(\varepsilon)$ in this case, and the inequality follows from our assumption that $p,q\in [n]\setminus \bar I(\varepsilon)$. Similar to the previous cases, we can argue that $(\bm{\bar y},\bm{\bar z})$ satisfies the mixing inequalities. Now we argue that $(\bm{\bar y},\bm{\bar z})$ satisfies every aggregated mixing inequality. By Lemma~\ref{lem:aggregate-lemma}, it is sufficient to consider only the sequences $\Theta=\left\{i_1\to\cdots\to i_\theta\right\}$ contained in $\{p,q\}$.  Since $\Theta\subseteq \{p,q\}$, we know that $\bar z_{i_1}=\cdots=\bar z_{i_\theta}=\frac{1}{2}$. Then the following inequality~\eqref{eq2-3} implies~\eqref{eq1}.
	\begin{equation}
	\sum_{j\in[k]}\bar y_j=\frac{1}{2}\sum_{j\in[k]}w_{p,j}+\frac{1}{2}\sum_{j\in[k]}w_{q,j}\geq\frac{1}{2}\min\left\{\varepsilon,~L_{\vW,\Theta}\right\} +\frac{1}{2}\sum_{j\in[k]} \max\left\{w_{i,j}:~i\in\Theta\right\}.\label{eq2-3}
	\end{equation}
	When $\Theta$ contains both $p$ and $q$, we have 
	\[
	L_{\vW,\Theta}=\sum_{j\in[k]}\min\{w_{p,j},w_{q,j}\}\quad\text{and}\quad\sum_{j\in[k]} \max\left\{w_{i,j}:~i\in\Theta\right\}=\sum_{j\in[k]} \max\left\{w_{p,j},w_{q,j}\right\}.
	\] 
	Therefore,~\eqref{eq2-3} holds in this case. \eqref{eq2-3} clearly holds if $\Theta=\{p\}$ or $\Theta=\{q\}$, because $\varepsilon$ is smaller than $\sum_{j\in[k]}w_{p,j}$ and $\sum_{j\in[k]}w_{q,j}$ (this follows from $p,q\notin \bar I(\varepsilon)$). Consequently, Lemma~\ref{lem:aggregate-lemma} implies that $(\bm{\bar y},\bm{\bar z})$ satisfies the aggregated mixing inequalities~\eqref{aggregate} for all sequences as well. Suppose for a  contradiction that $(\bm{\bar y},\bm{\bar z})$ is a convex combination of two points $(\bm{y^1},\bm{z^1})$ and $(\bm{y^2},\bm{z^2})$ in $\cM(\vW,\mathbf{0},\varepsilon)$. As in the previous cases, we can argue that $\bm{z^1}$ and $\bm{z^2}$ satisfy the constraints $z_i\leq 1$ for $i\not\in \{p,q\}$ and $z_p=z_q$ at equality. Therefore, $\bm{z^1}=\mathbf{1}$ and $\bm{z^2}=\mathbf{1}_{[n]\setminus \{p,q\}}$.
	Then we have
	\[
	\sum_{j\in[k]}y^1_j\geq \varepsilon,\quad\sum_{j\in[k]}y^2_j\geq \sum_{j\in[k]}\max\{w_{p,j},w_{q,j}\}\quad\text{and}\quad (\bm{\bar y},\bm{\bar z})=\frac{1}{2}(\bm{y^1},\bm{z^1})+\frac{1}{2}(\bm{y^2},\bm{z^2}),
	\]
	which implies that
	\[
	\sum_{j\in[k]}\frac{1}{2}\max\left\{w_{p,j},~w_{q,j}\right\}+\frac{1}{2}\sum_{j\in[k]}\min\left\{w_{p,j},~w_{q,j}\right\} = \sum_{j\in[k]} \bar y_j=\frac{1}{2} \sum_{j\in[k]} (y^1_j+y^2_j)\geq \frac{1}{2}\varepsilon+\frac{1}{2}\sum_{j\in[k]}\max\{w_{p,j},w_{q,j}\}.
	\]
	This is a contradiction to our assumption that $\varepsilon>\sum_{j\in[k]}\min\left\{w_{p,j},~w_{q,j}\right\}$. Therefore, $(\bm{\bar y},\bm{\bar z})$ is not contained in $\conv(\cM(\vW,\mathbf{0},\varepsilon))$. This completes the proof of Theorem~\ref{thm:main}.

\end{document}